\documentclass[twoside,a4paper]{amsart}

\usepackage{amsmath,amssymb,amsthm,upgreek}
\usepackage{xcolor}
\definecolor{webgreen}{rgb}{0,.5,0}
\definecolor{webbrown}{rgb}{.6,0,0}
\definecolor{RoyalBlue}{cmyk}{1, 0.50, 0, 0}
\usepackage[colorlinks=true, breaklinks=true, urlcolor=webbrown, linkcolor=RoyalBlue, citecolor=webgreen,backref=page]{hyperref}
\usepackage{epsfig, graphicx,subfigure}
\usepackage{verbatim,setspace}

\oddsidemargin = 10mm
\evensidemargin = 10mm
\topmargin = -10mm
\textwidth = 140mm
\textheight = 240mm


\newcommand{\R}		{\mathbb{R}}
\newcommand{\C}		{\mathbb{C}}
\newcommand{\N}		{\mathbb{N}}
\newcommand{\Z}		{\mathbb{Z}}
\newcommand{\cws}{\stackrel{*}{\to}}

\newcommand{\dd}{\mathrm{d}}
\newcommand{\ic}{\mathrm{i}}

\newcommand{\diam}{\mathsf{diam}}
\newcommand{\supp}{\mathsf{supp}}
\newcommand{\im}{\mathsf{Im}}
\newcommand{\re}{\mathsf{Re}}
\newcommand{\cp}{\mathsf{cp}}
\renewcommand{\deg}{\mathsf{deg}}
\newcommand{\qasq}{\quad\text{as}\quad}
\newcommand{\qandq}{\quad\text{and}\quad}
\newcommand{\qforq}{\quad\text{for}\quad}
\newcommand{\RS}{\boldsymbol{\mathfrak{R}}}
\newcommand{\bd}{\boldsymbol{\Delta}}
\newcommand{\mdp}{\mathrm{mod~periods~}}
\newcommand{\z}	{{\boldsymbol z}}
\newcommand{\x}	{{\boldsymbol x}}
\newcommand{\y}	{{\boldsymbol y}}
\newcommand{\e}	{{\boldsymbol e}}
\newcommand{\s}	{{\boldsymbol s}}
\newcommand{\ve}	{{\boldsymbol v}}
\newcommand{\ualpha}{\boldsymbol\upalpha}
\newcommand{\ubeta}{\boldsymbol\upbeta}
\newcommand{\ugamma}{\boldsymbol\upgamma}
\newcommand{\am}{\mathfrak{a}}
\newcommand{\rhy}   {\textnormal{RHP}-${\boldsymbol Y}$}
\newcommand{\rha}   {\textnormal{RHP}-\( \boldsymbol A \)}
\newcommand{\rhx}   {\textnormal{RH\(\bar\partial\)P}-\( \boldsymbol X \)}
\newcommand{\rhn}   {\textnormal{RHP}-${\boldsymbol N}$}
\newcommand{\rhr}   {\textnormal{RHP}-${\boldsymbol Z}$}
\newcommand{\pbd}  {\textnormal{\(\bar\partial\)P}-\( \boldsymbol D \)}

\newtheorem{theorem}{Theorem}[section]
\newtheorem{proposition}[theorem]{Proposition}

\newtheorem{lemma}[theorem]{Lemma}
\newtheorem{condition}{Condition}[section]
\newtheorem{definition}{Definition}[section]

\begin{document}

\title[Symmetric Contours and Convergent Interpolation]{Symmetric Contours and Convergent Interpolation}

\author[M. Yattselev]{Maxim L. Yattselev}

\address{Department of Mathematical Sciences, Indiana University-Purdue University Indianapolis, 402~North Blackford Street, Indianapolis, IN 46202}

\email{\href{mailto:maxyatts@iupui.edu}{maxyatts@iupui.edu}}

\thanks{The research was supported in part by a grant from the Simons Foundation, CGM-354538.}

\subjclass[2000]{42C05, 41A20, 41A21}

\keywords{multipoint Pad\'e approximation, orthogonal polynomials, non-Hermitian orthogonality, strong asymptotics, S-contours, matrix Riemann-Hilbert approach}

\maketitle

\begin{abstract}

The essence of Stahl-Gonchar-Rakhmanov theory of symmetric contours as applied to the multipoint Pad\'e approximants is the fact that given a germ of an algebraic function and a sequence of rational interpolants with free poles of the germ, if there exists a contour that is ``symmetric'' with respect to the interpolation scheme, does not separate the plane, and in the complement of which the germ has a single-valued continuation with non-identically zero jump across the contour, then the interpolants converge to that continuation in logarithmic capacity in the complement of the contour. The existence of such a contour is not guaranteed. In this work we do construct a class of pairs interpolation scheme/symmetric contour with the help of hyperelliptic Riemann surfaces (following the ideas of Nuttall \& Singh \cite{NutS77} and Baratchart \& the author \cite{BY09c}). We consider rational interpolants with free poles of Cauchy transforms of non-vanishing complex densities on such contours under mild smoothness assumptions on the density. We utilize \( \bar\partial \)-extension of the Riemann-Hilbert technique to obtain formulae of strong asymptotics for the error of interpolation.

\end{abstract}

\maketitle

\section{Introduction}
\label{sec:intro}

Rational approximation of analytic functions is a very classical subject with various applications in number theory \cite{Herm73, Siegel, Skor03}, numerical analysis \cite{IserlesNorsett, BrezinskiRedivoZaglia}, modeling and control of signals and systems \cite{Antoulas, CameronKudsiaMansour, B_CMFT99,Partington2}, quantum mechanics and quantum field perturbation theory \cite{Baker, Tj_PRA77}, and many others. The theoretical aspects of the theory include the very possibility of such an approximation \cite{Run85,Mer62,Vit66} as well as the rates of convergence of the approximants at regular points when the degree grows large \cite{Walsh, Gon78c,Par86, Pr93}. 

In this work we are interested in rational interpolants with free poles,  the so-called {\em multipoint Pad\'e approximants} \cite{BakerGravesMorris}. Those are rational functions of type\footnote{A rational function is said to be of type \( (m,n) \) if it can be written as the ratio of a polynomial of degree at most \( m \) and a polynomial of degree at most \( n \).} \( (m,n) \) that interpolate a given function at \( m+n+1 \) points, counting multiplicity.  The beauty of multipoint Pad\'e approximants lies in the simplicity of their construction and the connection to (non-Hermitian) orthogonal polynomials. More precisely, the approximated function always can be written as a Cauchy integral of a complex density on any curve separating the interpolation points from the singularities of the function. The denominators of the multipoint Pad\'e approximants then turn out to be orthogonal to all the polynomials of smaller degree with respect to this density divided by the polynomial whose zeroes are the finite interpolation points. This connection is the most fruitful when the curve can be collapsed into a contour that does not separate the plane (as in the case of functions with algebraic and logarithmic singularities only). In general, there are many choices for such a contour with no obvious geometrical reason to prefer one over the other. The identification of the ``proper contour'', the one that attracts almost all of the poles of the approximants, is a fundamental question in the theory of Pad\'e approximation.

For the case of classical diagonal Pad\'e approximants (all the interpolation points are at infinity and \( m=n \)) to functions with branchpoints this question was answered in a series of pathbreaking papers \cite{St85, St85b, St86} by Stahl, where the approximants were shown to converge in capacity on the complement of the system of arcs of minimal logarithmic capacity outside of which the function is analytic and single-valued. The extremal system of arcs, called a {\it symmetric contour} or an {\em \( S \)-contour}, is characterized by the equality of the one-sided normal derivatives of its equilibrium potential at every smooth point of the contour, and the above-mentioned convergence ultimately depends on a deep potential-theoretic analysis of the zeros of non-Hermitian orthogonal polynomials. Shortly after, this result was extended by Gonchar and Rakhmanov \cite{GRakh87} to multipoint Pad\'e approximants to Cauchy integrals of continuous quasi-everywhere non-vanishing functions over contours minimizing now some \emph{weighted} capacity, provided that the interpolation points asymptotically distribute like a measure whose potential is the logarithm of the weight, see Section~\ref{sec:SGR} for a more detailed description of Stahl-Gonchar-Rakhmanov theory. 

These works clearly show that the appropriate Cauchy integrals for Pad\'e approximation must be taken over \( S \)-contours symmetric with respect to the considered interpolation schemes, if such contours exist. This poses a natural inverse problem: given a system of arcs, say \( \Delta \), is there an interpolation scheme turning \( \Delta \) into an symmetric contour? This inverse problem was first considered by Baratchart and the author in \cite{BY09c} for the case of a single Jordan arc. Below we build on the ideas of \cite{BY09c} by exhibiting a class of contours that are symmetric with respect to appropriately constructed interpolation schemes, see Section~\ref{ssec:SC}, and then derive formulae of \emph{strong} asymptotics for the error of approximation by multipoint Pad\'e approximants to Cauchy integrals of smooth densities on these contours, see Section~\ref{ssec:AA}.

\section{Stahl-Gonchar-Rakhmanov Theory}
\label{sec:SGR}

Throughout this section we always assume that \( f \) is a function holomorphic in a neighborhood of the point at infinity. The \emph{\( n \)-th diagonal Pad\'e approximant to \( f \)} is a rational function \( [n/n]_f = p_n/q_n \) of type \( (n,n) \) such that
\[
q_n(z)f(z)-p_n(z) = \mathcal O\big(1/z^{n+1}\big)  \qasq z\to\infty.
\]
Such a pair of polynomials always exists, the polynomial \( q_n \) of minimal degree is always unique, is never identically zero, and uniquely determines \( p_n \), see the explanation after Definition~\ref{def:pade} further below.

Our starting point is the following observation: if \( f \) is a germ of an algebraic function, then the approximants cannot converge everywhere outside of the branch points of \( f \) as their limit in capacity must be single-valued. Two questions immediately arise from this observation: do the approximants converge and if they do, where? To give answers to these question let us introduce a notion of an admissible compact. A compact set \( K \) is called \emph{admissible} for \( f \) if \( \overline\C\setminus K \) is connected and \( f \) has a meromorphic and single-valued extension there. The following theorem summarizes one of the fundamental contributions of Herbert Stahl to complex approximation theory \cite{St85,St85b,St86,St97}.

\begin{theorem}[{\bf Stahl}]
\label{thm:stahl}
Assume that the function \( f \) has a meromorphic continuation along any arc originating at infinity that belongs to \( \C\setminus E_f \) for some compact set \( E_f \) with \( \cp(E_f)=0 \)\footnote{\( \cp(\cdot) \) stands for the logarithmic capacity \cite{Ransford}.} and there do exist points in \( \C\setminus E_f \) that possess distinct continuations. Then
\begin{itemize}
\item[(i)] there exists the unique admissible compact \( \Delta_f \) such that \( \cp(\Delta_f)\leq\cp(K) \) for any admissible compact \( K \) and \( \Delta_f\subseteq K \) for any admissible \( K \) satisfying \( \cp(\Delta_f)=\cp(K) \). Pad\'e approximants \( [n/n]_f \) converge to \( f \)  in logarithmic capacity in \( D_f:=\overline\C\setminus\Delta_f \). The domain \( D_f \) is optimal in the sense that the convergence does not hold in any other domain \( D \) such that \( D\setminus D_f\neq\varnothing \).
\item[(ii)] the compact \( \Delta_f \) can be decomposed as \( \Delta_f=E_0 \cup E_1 \cup \bigcup\Delta_j \), where \( E_0\subseteq E_f \), \( E_1 \) consists of isolated points to which \( f \)  has unrestricted continuations from the point at infinity leading to at least two distinct function elements, and \( \Delta_j \) are open analytic arcs.
\item[(iii)] the Green function for \( D_f \) with pole at infinity, say \(g_{D_f} \)\footnote{The function \( g_{D_f} \) is harmonic and positive in \( D_f\setminus\{\infty\} \), its boundary values on \( \Delta_f \) vanish everywhere with a possible exception of a set of zero logarithmic capacity, and \( g_{D_f}(z)-\log|z| \) is bounded as \( z\to\infty \).}, possesses the S-property:
\[
\frac{\partial g_{D_f}}{\partial \boldsymbol{n}_+} = \frac{\partial g_{D_f}}{\partial \boldsymbol{n}_-} \quad \text{on} \quad \bigcup \Delta_j,
\]
where \( \partial/\partial\boldsymbol{n}_\pm \) are the one-sided normal derivatives on \( \bigcup \Delta_j \). Define
\[
 h_{D_f}(z):=\partial_zg_{D_f}(z), \quad 2\partial_z:=\partial_x-\mathrm{i}\partial_y.
\]
The function  \( h_{D_f}^2 \) is holomorphic in \( \overline\C\setminus(E_0\cup E_1) \), has a zero of order \( 2 \) at infinity, and the arcs \( \Delta_j \) are orthogonal critical trajectories of the quadratic differential \( h_{D_f}^2(z)\mathrm{d}z^2 \).
\item[(iv)] Assume in addition that \( f \)  is a germ of an algebraic function (\( E_f \) is necessarily finite). For each point \( e\in E_0\cup E_1 \) denote by \( i(e) \) the bifurcation index of \( e \), that is, the number of different arcs \( \Delta_j \) incident with \( e \). Then
\[
h_{D_f}^2(z) = \prod_{e\in E_0\cup E_1}(z-e)^{i(e)-2}\prod_{e\in E_2}(z-e)^{2j(e)},
\]
where \( E_2 \) is the set of critical points of \( g_{D_f} \) with \( j(e) \) standing for the \emph{order} of \( e\in E_2 \), i.e., \( \partial_z^jg_{D_f}(e)=0 \) for \( j\in\{1,\ldots,j(e)\} \) and \( \partial_z^{j(e)+1}g_{D_f}(e)\neq0 \), see Figure~\ref{fig:1}.
\end{itemize}
\end{theorem}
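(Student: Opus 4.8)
The plan is to establish the four items essentially in the order (i)--existence/uniqueness, then (iii) and (ii), then the convergence part of (i), then (iv), with the genuinely hard work concentrated in the convergence statement. For the extremal compact \( \Delta_f \), I would pose the problem of minimizing \( \cp(K) \) over the (nonempty) family of admissible compacts for \( f \). Existence of a minimizer follows from a normal-families argument: take a minimizing sequence \( K_n \), pass to a Hausdorff limit, and use lower semicontinuity of capacity to see that the limit is still admissible --- connectedness of the complement and the single-valued meromorphic continuation survive because \( \cp(E_f)=0 \) --- and capacity-minimal. Uniqueness, together with the nestedness \( \Delta_f\subseteq K \) for every capacity-minimal \( K \), I would obtain by comparing the Green functions \( g_{\overline{\C}\setminus K_1} \) and \( g_{\overline{\C}\setminus K_2} \) of two minimizers via the maximum principle, the same two-contour mechanism that underlies the S-property.

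Next, for the S-property I would run the classical variational argument: if at some regular point of \( \Delta_f \) the one-sided normal derivatives \( \partial g_{D_f}/\partial\boldsymbol n_\pm \) disagreed, an infinitesimal local deformation of \( \Delta_f \) in the appropriate normal direction would produce a still-admissible compact of strictly smaller capacity, contradicting minimality; checking that the deformed set remains admissible is exactly where the decomposition in (ii) --- that off the finite exceptional set \( E_0\cup E_1 \) the compact is a finite union of analytic arcs --- is needed. The arc structure and the quadratic-differential description themselves come from noting that on each arc \( g_{D_f} \) vanishes from both sides and, by the S-property, has matching normal derivatives, so \( h_{D_f}=\partial_z g_{D_f} \) has a single-valued square across the arc and \( h_{D_f}^2\,\dd z^2>0 \) along it; hence the arcs are orthogonal critical trajectories of \( h_{D_f}^2\,\dd z^2 \), and the local normal form of such trajectories makes them real-analytic. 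The double zero of \( h_{D_f}^2 \) at infinity is immediate from \( g_{D_f}(z)=\log|z|+O(1) \).

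The heart of the matter --- and the step I expect to be the main obstacle --- is the convergence in capacity, which I would treat by the Gonchar--Rakhmanov--Stahl method. Writing \( f \) as the Cauchy transform of its jump density \( \rho \) across \( \Delta_f \), the Pad\'e denominator \( q_n \) satisfies the non-Hermitian orthogonality \( \int_{\Delta_f} q_n(t)\,t^k\,\rho(t)\,\dd t=0 \) for \( 0\le k\le n-1 \); the goal is to show that the normalized zero-counting measures \( \nu_n \) of \( q_n \) satisfy \( \nu_n\cws\omega_{\Delta_f} \), the equilibrium measure of \( \Delta_f \), and then deduce convergence in capacity of \( [n/n]_f \) to \( f \) on compact subsets of \( D_f \) from potential-theoretic estimates on \( |q_n| \) and on the interpolation remainder. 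To identify a weak-\( * \) limit \( \nu \) of the \( \nu_n \) one argues two things: the spurious zeros of \( q_n \) cannot accumulate on a set of positive capacity, so \( \supp\nu\subseteq\Delta_f \) --- proved by feeding any hypothetical mass off \( \Delta_f \) back into the orthogonality relation and deriving a contradiction through the S-property --- and on \( \Delta_f \) the potential \( U^\nu \) is forced to be constant, again via the S-property balancing the two-sided contributions, whence \( \nu=\omega_{\Delta_f} \) by uniqueness of the equilibrium measure. The obstacle, and the reason the conclusion is convergence in capacity rather than locally uniform convergence, is precisely the bounded number of wandering zeros of \( q_n \) inside \( D_f \): they cannot be excluded outright, so the argument must be organized to show only that the exceptional set where \( |[n/n]_f-f| \) fails to be small has capacity tending to zero. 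Optimality of \( D_f \) is then soft: convergence in capacity on a domain reaching beyond \( D_f \) would yield a single-valued continuation of \( f \) across part of \( \Delta_f \), contradicting minimality of \( \cp(\Delta_f) \) or the presence of genuine branching.

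Finally, for (iv) assume \( f \) is algebraic, so \( E_f \), hence \( E_0\cup E_1 \), is finite. Then \( g_{D_f} \) continues as a locally harmonic function whose differential \( h_{D_f}\,\dd z \) lifts to a meromorphic differential on the compact surface \( \RS \) obtained by glueing two copies of \( D_f \) crosswise along \( \bigcup\Delta_j \); consequently \( h_{D_f}^2 \) is a rational function of \( z \). Its divisor I would read off locally from the trajectory structure in (iii): near \( e\in E_0\cup E_1 \) with bifurcation index \( i(e) \), a harmonic function vanishing along the \( i(e) \) arcs issuing from \( e \) forces \( h_{D_f}^2\sim c(z-e)^{i(e)-2} \) (a simple pole when \( i(e)=1 \), a regular nonzero value when \( i(e)=2 \)); near a critical point \( e\in E_2 \) of order \( j(e) \) one gets a zero of order \( 2j(e) \); along the interior of the arcs there is neither zero nor pole; and there is the double zero at infinity. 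A global degree count for the divisor of the rational function \( h_{D_f}^2 \) --- equivalently the Riemann--Hurwitz relation for \( \RS\to\overline{\C} \) --- then pins down the remaining exponents and produces the displayed product. I expect the only delicate point in (iv) to be getting the exponents at the exceptional points right across the cases \( i(e)=1 \), \( i(e)=2 \), and \( i(e)\ge 3 \), which the local quadratic-differential normal form from (iii) supplies.
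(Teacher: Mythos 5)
The paper does not prove this theorem: it is stated as a summary of Stahl's contributions and cited directly from \cite{St85,St85b,St86,St97}, so there is no in-paper argument to compare against. What you have produced is a bird's-eye sketch of Stahl's program, and at that altitude it is broadly faithful to the strategy found in those references. Nonetheless a few remarks are in order.

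Your outline understates the difficulty and partially inverts the logical ordering. The variational argument you give for (iii) ``an infinitesimal local deformation would lower capacity'' presupposes that one already knows \( \Delta_f \) consists of smooth arcs at the point being deformed, which is part (ii); but you then say (ii) ``is needed'' for (iii) while deriving (ii) from the quadratic-differential structure of (iii). Stahl breaks this circle by a delicate bootstrap: he first establishes regularity of the extremal set (finite union of analytic arcs off a finite exceptional set) using the structure of the capacity-minimizing problem, then deduces the S-property and the trajectory description. As stated, your sketch has a genuine circularity. Similarly, in (i) the assertion \( \Delta_f\subseteq K \) for every other capacity-minimal admissible \( K \) (the ``nestedness'' or domination property) is a separate and nontrivial theorem in Stahl's work, not a by-product of uniqueness via Green-function comparison; your one-sentence appeal to the maximum principle does not capture it.

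The convergence-in-capacity part is where your proposal is thinnest relative to the actual content. Writing down the non-Hermitian orthogonality and asserting that any weak-\( * \) limit of the zero-counting measures must equal the equilibrium measure is the statement, not the proof. The central obstacle is that non-Hermitian orthogonality gives no a priori control on where the zeros of \( q_n \) lie, and one cannot simply ``feed hypothetical mass off \( \Delta_f \) back into the orthogonality relation''; Stahl's argument hinges on a careful analysis of the function \( q_n^2 f - p_n q_n \), a two-sided potential-theoretic estimate exploiting the S-property, and a precise accounting for a bounded number of spurious zeros. You correctly identify this last point as the reason the conclusion is convergence in capacity rather than locally uniform, which is good, but the mechanism you describe would not, as written, yield the result. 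Finally, the divisor computation in (iv), including the double-sheeted lift and the local exponents \( i(e)-2 \) and \( 2j(e) \), is accurate and matches the standard treatment.
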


\begin{figure}[!ht]
\centering
\includegraphics[scale=.7]{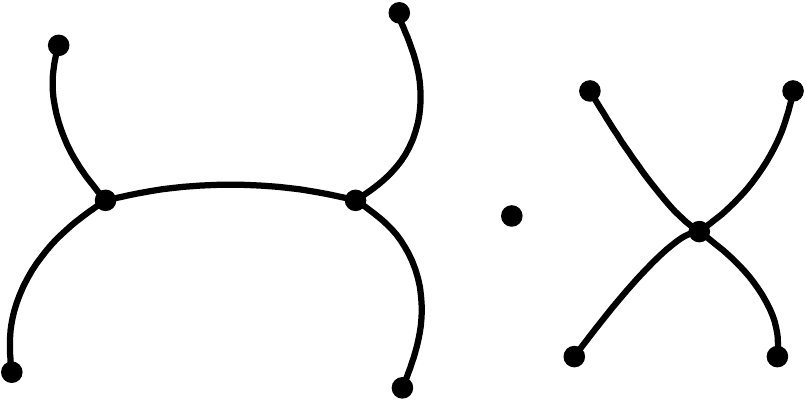}
\begin{picture}(0,0)
\put(-177,4){\( e_0 \)}
\put(-159,40){\( e_1 \)}
\put(-167,72){\( e_2 \)}
\put(-81,76){\( e_3 \)}
\put(-88,39){\( e_4 \)}
\put(-81,1){\( e_5 \)}
\put(-60,61){\( e_6 \)}
\put(-2,60){\( e_7 \)}
\put(-5,6){\( e_8 \)}
\put(-63,6){\( e_9 \)}
\put(-32,42){\( e_{10} \)}
\put(-65,42){\( e_{11} \)}
\put(-159,20){\( \Delta_1 \)}
\put(-153,60){\( \Delta_2 \)}
\put(-127,35){\( \Delta_3 \)}
\put(-95,60){\( \Delta_4 \)}
\put(-95,20){\( \Delta_5 \)}
\put(-40,55){\( \Delta_6 \)}
\put(-9,45){\( \Delta_7 \)}
\put(-9,20){\( \Delta_8 \)}
\put(-40,16){\( \Delta_9 \)}
\end{picture}
\caption{\small A possible shape of \( \Delta_f \). Generically, \( E_2 = \{ e_{11} \} \), \( E_1 =\{ e_1, e_4,e_{10}\} \), and the rest of the points belong to \( E_0 \). If some of the point \( e_1,e_4,e_{10} \) belong to \( E_f \), then they should be classified as elements of \( E_0 \). }
\label{fig:1}
\end{figure}

Classical Pad\'e approximants interpolate the function at one point, namely the point at infinity, with maximal order. However, one might want to interpolate it at several points. To this end, let \( V_{m+n}=\{v_{m+n,i}\}_{i=1}^{m+n} \) be a collection of not necessarily distinct nor finite points from a domain to which \( f \) possesses a single-valued holomorphic continuation.

\begin{definition}
\label{def:pade}
The multipoint Pad\'e approximant to \( f(z) \) associated with \( V_{m+n} \) of type \( (m,n) \) is a rational function \( [m/n;V_{m+n}]_f = p_{m,n}/q_{m,n} \) such that \( \deg(p_{m,n}) \leq m \), \( \deg(q_{m,n}) \leq n \), the linearized error
\begin{equation}
\label{Rn}
R_{m,n}(z) := \frac{q_{m,n}(z)f(z)-p_{m,n}(z)}{v_{m+n}(z)} = \mathcal O\left(z^{-\min\{m,n\}-1}\right) \qasq z\to\infty
\end{equation}
and has the same region of analyticity as \( f \), where \( v_{m+n} \) is the polynomial vanishing at finite elements of \( V_{m+n} \) according to their multiplicity\footnote{This definition yields linearized interpolation at the elements of \( V_{m+n} \) with one additional condition at infinity.}. We shall call the approximant diagonal if \( m=n \). Clearly, we recover the definition of the classical diagonal Pad\'e approximant when \( V_{2n} \) consists only of the points at infinity.
\end{definition}

The approximant \( [m/n;V_{m+n}]_f \) always exists as the conditions placed on \( R_{m,n} \) amount to solving a system of \( m+n+1 \) equations with \( m+n+2 \) unknowns. Observe that given the denominator polynomial, the numerator one is uniquely defined. Indeed, if \( p_1 \) and \( p_2 \) were to correspond to the same denominator, the expression \( (p_1 - p_2)/v_{m+n} \) would vanish at infinity with order at least \( \min\{m,n\}+1 \) and also at every zero of \( v_{m+n} \), which is clearly impossible. Moreover, one can immediately see from \eqref{Rn} that if \( p_1,q_1 \) and \( p_2,q_2 \) are solutions, then so is any linear combination \( c_1p_1 + c_2p_2,c_1q_1 + c_2q_2 \). Therefore, the solution corresponding to the monic denominator of the smallest degree is unique. In what follows, we understand that \( q_{m,n}, p_{m,n}, R_{m,n} \) come from this unique solution.

The most general result concerning the convergence in capacity of the diagonal multipoint Pad\'e approximants follows from the work of Gonchar and Rakhmanov \cite{GRakh87}. It deals more generally with the asymptotics of polynomials satisfying certain weighted non-Hermitian orthogonality relations of which denominators of the multipoint Pad\'e approximants are a particular example. Below we shall adduce their result solely within the framework of multipoint Pad\'e approximation. The starting point for \cite{GRakh87} is the generalization of the S-property introduced by Stahl.

\begin{definition}
\label{def:sym-GR}
Let $\Delta$ be a system of finitely many Jordan arcs that does not separate the plane and set $D:=\overline\C\setminus\Delta$. Assume that almost every point of $\Delta$ belongs to an analytic subarc. It is said that $\Delta$ is symmetric with respect to a positive Borel measure $\nu$ supported in $D$ (has the S-property w.r.t. \(\nu\)) if
\[
\frac{\partial g_D^\nu}{\partial \boldsymbol{n}_+} = \frac{\partial g_D^\nu}{\partial \boldsymbol{n}_-} \quad \text{a.e. on} \quad \Delta,
\]
where $\partial/\partial\boldsymbol{n}_\pm$ are the one-sided normal derivatives on \(\Delta\), $g_D^\nu(z):=\int g_D(z,u)\dd \nu(u)$ is the Green potential of $\nu$, and $g_D(\cdot,u)$ is the Green function for $D$ with pole at $u\in D$\footnote{When \(|u|<\infty\), \(g_D(z,u)\) is harmonic and positive in \(D\setminus\{u\}\), its boundary values on \(\Delta\) vanish everywhere with a possible exception of a set of zero logarithmic capacity, and \(g_D(z,u)+\log|z-u|\) is bounded as \(z\to u\).}.
\end{definition}

As the next step we choose an interpolation scheme that asymptotically approaches the measure \( \nu \). More precisely, given a function \( f \) and a collection of interpolations sets \( \mathcal V=\{V_{2n}\}_{n=1}^\infty \), we assume that
\begin{equation}
\label{asymp-measure}
\nu_n\cws\nu \qasq n\to\infty, \quad \nu_n := \frac1{2n}\sum_{i=1}^{2n}\delta(v_{2n,i}),
\end{equation}
where \(\delta(z)\) is the Dirac's delta distribution supported at \(z\)\footnote{The weak\( ^* \) convergence in the case of unbounded sets \( V_{2n} \) should be understood as follows: for any point \( a\notin \supp(\nu)\cup\bigcup_n V_{2n} \), the images of \( \nu_n \) under the map \( 1/(z-a) \) converge weak\( ^* \) to the image of \( \nu \) under the same map.}.

\begin{theorem}[{\bf Gonchar-Rakhmanov}]
Let \( \Delta \) be symmetric with respect to a positive Borel measure \( \nu \) supported in \( D=\overline\C\setminus\Delta \). If the function \( f(z) \) admits holomorphic continuation into \( D \) that we continue to denote by \( f \) and the jump of \( f \) across \( \Delta \) is non-zero almost everywhere, then the diagonal multipoint Pad\'e approximants \( [n/n;V_{2n}]_f \) associated with an interpolation scheme \( \mathcal V=\{V_{2n}\} \) asymptotic to \( \nu \) converge to \( f \) in logarithmic capacity in \( D \).
\end{theorem}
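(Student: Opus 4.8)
The plan is to transfer the question to the denominators $q_n:=q_{n,n}$ of $[n/n;V_{2n}]_f$ and to analyse them through an exact integral representation of the error together with a potential-theoretic description of their zeros. Since $f$ is holomorphic in $D\ni\infty$, Cauchy's formula gives $f(z)=f(\infty)+\tfrac1{2\pi\ic}\int_\Delta\frac{h(s)}{s-z}\,\dd s$, where $h:=f_+-f_-$ is the jump of $f$ across $\Delta$, non-zero a.e. by hypothesis. By Definition~\ref{def:pade} the linearized error $R_n:=(q_nf-p_n)/v_{2n}$ is holomorphic in $D$, is $\mathcal O(z^{-n-1})$ at infinity, and has jump $q_nh/v_{2n}$ across $\Delta$; deforming a large positively oriented loop around $\Delta$ in the integral $\oint s^jR_n(s)\,\dd s$ for $j=0,\dots,n-1$ and using the decay at infinity yields the non-Hermitian orthogonality relations $\int_\Delta s^jq_n(s)\frac{h(s)}{v_{2n}(s)}\,\dd s=0$. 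Writing $(s-z)^{-1}$ as its Lagrange interpolant of degree $n-1$ at the zeros of $q_n$ plus the remainder $q_n(s)/\big((s-z)q_n(z)\big)$ and invoking these relations gives
\[
f(z)-[n/n;V_{2n}]_f(z)=\frac{v_{2n}(z)}{2\pi\ic\,q_n^2(z)}\int_\Delta\frac{q_n^2(s)\,h(s)}{v_{2n}(s)\,(s-z)}\,\dd s ;
\]
it is essential that the denominator enters squared here rather than linearly.

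The core of the proof is the asymptotic distribution of the zeros of $q_n$. Let $U^\mu(z):=\int\log\tfrac1{|z-t|}\,\dd\mu(t)$ be the logarithmic potential, and let $\lambda:=\mathrm{bal}(\nu,\Delta)$ be the balayage of $\nu$ onto $\Delta$; it is a unit measure on $\Delta$ with $U^\lambda-U^\nu$ constant quasi-everywhere on $\Delta$, and $g_D^\nu=U^\nu-U^\lambda$ up to the same constant throughout $D$. I claim that $\nu_{q_n}\cws\lambda$, where $\nu_{q_n}$ is the normalized zero-counting measure of $q_n$, and that $\tfrac1n\log|q_n|\to-U^\lambda$ in logarithmic capacity on $D$. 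The upper estimate $\limsup_n\tfrac1n\log|q_n(z)|\le-U^\lambda(z)$, locally uniformly in $D$, is a Bernstein--Walsh/extremality argument for the weighted equilibrium problem on $\Delta$ with external field $-U^\nu$ (the latter being the limit of $\tfrac1{2n}\log|v_{2n}|$, which is what the weight $1/|v_{2n}|$ contributes). The matching lower bound and---crucially---the identification of the limiting measure as all of $\lambda$, i.e. that essentially every zero of $q_n$ migrates onto $\Delta$ and equidistributes there rather than being lost to $D$ or to a proper subset of $\Delta$, is where the hypotheses enter: the S-property $\frac{\partial g_D^\nu}{\partial\boldsymbol{n}_+}=\frac{\partial g_D^\nu}{\partial\boldsymbol{n}_-}$ is exactly the criticality condition singling out $\Delta$ (rather than another admissible contour) as the attractor of the zeros and rendering the potential-theoretic comparison function for $|q_n|$ single-valued, while the a.e. non-vanishing of $h$ prevents the orthogonality relations from degenerating on a subarc and so forces the zeros onto the whole of $\Delta$. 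This is the main obstacle: it is, in essence, the content of \cite{GRakh87} (building on \cite{St85,St85b,St86}) and needs the full weighted-potential-theory machinery---maximum-principle control of the zeros of non-Hermitian orthogonal polynomials---rather than a short computation.

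Granting the zero-distribution result, the conclusion follows by substitution into the error representation. On $\Delta$ one has $\tfrac1n\log\big|q_n^2(s)/v_{2n}(s)\big|\to2\big(U^\nu(s)-U^\lambda(s)\big)=\mathrm{const}$ quasi-everywhere, so, with the standing integrability and smoothness of $h$ inherited from \cite{GRakh87}, the integral in the representation is $e^{O(n)}$ uniformly for $z$ in a fixed compact $K\subset D$, while $\tfrac1n\log\big|v_{2n}(z)/q_n^2(z)\big|\to2\big(U^\lambda(z)-U^\nu(z)\big)=-2g_D^\nu(z)$ up to the same additive constant. The constant cancels between the two factors, and one gets
\[
\limsup_{n\to\infty}\frac1n\log\big|f(z)-[n/n;V_{2n}]_f(z)\big|\le-2\,g_D^\nu(z),\qquad z\in K ,
\]
valid off the zeros of $q_n$. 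Since $g_D^\nu>0$ in $D$, and since by the equidistribution the exceptional set---a shrinking neighbourhood of the zeros of $q_n$---has logarithmic capacity tending to $0$ while on its complement the bound above is genuinely geometric, this is precisely the asserted convergence of $[n/n;V_{2n}]_f$ to $f$ in logarithmic capacity on $D$. The only remaining points---finitely many fixed singularities among the interpolation nodes, and the boundary behaviour of $h$---are absorbed into the holomorphy of $R_n$ in $D$ and into the smoothness assumptions carried over from \cite{GRakh87}.
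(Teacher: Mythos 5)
The paper does not prove this theorem at all: it is presented in Section~\ref{sec:SGR} as a known result attributed to \cite{GRakh87}, to set the stage for the author's own contributions in Sections~\ref{sec:main}--\ref{sec:asymp}. So there is no proof in the paper to compare against; what you have written has to be judged on its own.

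As a sketch of the argument from \cite{GRakh87}, your outline is faithful. The error representation
\[
f(z)-[n/n;V_{2n}]_f(z)=\frac{v_{2n}(z)}{2\pi\ic\,q_n^2(z)}\int_\Delta\frac{q_n^2(s)\,h(s)}{v_{2n}(s)(s-z)}\,\dd s
\]
is correctly derived from the orthogonality relations $\int_\Delta s^jq_n(s)h(s)/v_{2n}(s)\,\dd s=0$, $j\le n-1$, by adding to $1/(s-z)$ a degree-$(n-1)$ polynomial that allows the single $q_n(s)$ in the Cauchy-type formula for $R_n$ to be upgraded to $q_n^2(s)$. The identification $g_D^\nu=U^\nu-U^\lambda+\text{const}$ with $\lambda=\mathrm{bal}(\nu,\Delta)$ is right, and so is the final cancellation giving $\limsup_n\frac1n\log|f-[n/n;V_{2n}]_f|\le-2g_D^\nu$ in capacity on compact subsets of $D$.

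What you cannot claim to have proved is the central step, and you say so yourself: the $n$-th root asymptotics $\tfrac1n\log|q_n|\to-U^\lambda$ in capacity, equivalently $\nu_{q_n}\cws\lambda$. That is precisely where the S-property of $\Delta$ with respect to $\nu$ and the a.e.\ non-vanishing of $h$ do their work, via the weighted-potential-theoretic analysis of zeros of non-Hermitian orthogonal polynomials, and it is not reducible to a few lines. Your write-up is therefore an accurate strategy outline with the hard lemma cited rather than proved. Given that the paper itself treats this as a black box from \cite{GRakh87}, that is a reasonable position, but it should be labelled a sketch rather than a proof. Two smaller remarks: (i) the orthogonality relations as stated presuppose $\deg q_n=n$ (full degree); in general one has to handle possible degree loss, which is part of what the reference addresses; and (ii) your bound on the integral ``$e^{O(n)}$'' needs the constant in $U^\nu-U^\lambda=\text{const}$ q.e.\ on $\Delta$ to be tracked carefully so that it really cancels with the factor $v_{2n}(z)/q_n^2(z)$ outside $\Delta$, which again is done in \cite{GRakh87} and is implicit rather than explicit in your sketch.
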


Observe that the above theorem assumes existence of a contour with an S-property while Stahl's theorem proves it but in a very specific case. Elaborating on Stahl's approach, Baratchart, Stahl, and the author \cite{BStY12} have shown that if the set \( E_f \) is finite and the measure \( \nu \) is supported outside of the smallest disk containing \( E_f \), then there exists a compact \( \Delta \) that is admissible for \( f \)  and is symmetric with respect to \( \nu \). Moreover, if \( E_f \) consists of two points, Baratchart and the author \cite{BY09c} proved that any Jordan arc connecting those points that is a conformal image of an interval is symmetric with respect to some measure supported in its complement. Finally, it is worth mentioning that in the framework of \cite{GRakh87}, sufficient conditions for existence of symmetric contours in harmonic fields were developed by Rakhmanov in \cite{Rakh12}. Let us stress that in \cite{Rakh12} given a harmonic field one looks for a system of arcs connecting certain points that is symmetric with respect to the field while in \cite{BY09c} and further below in Theorem~\ref{thm:scontours} one starts with a system of arcs for which a measure that makes it symmetric  is then produced (the corresponding field is given by the logarithmic potential of the measure).

\section{Main Results}
\label{sec:main}

This section is divided into four subsections. In the first one we adapt the definition of symmetry to our purposes (strong asymptotics) and state a result on existence of symmetric contours. In the second subsection we define all the functions necessary to describe asymptotics of the multipoint Pad\'e approximants, which is done in the third part of this section. Some numerical computations illustrating the theoretical results are presented in the final subsection.

\subsection{Symmetric Contours}
\label{ssec:SC}

Even before the work of Stahl, Nuttall and Singh \cite{NutS77} considered a class of contours that do satisfy Stahl's symmetry property, but were defined with the help of hyperelliptic Riemann surfaces. Below, we elaborate on this approach. To this end, let  \( E=\{e_0,\ldots,e_{2g+1}\} \) be a set of \( 2g+2 \) distinct points in \( \C \) and
\begin{equation}
\label{R}
\RS := \left\{ (z,w):~w^2 = \prod_{e\in E}(z-e),~z\in\overline\C \right\}
\end{equation}
be a hyperelliptic Riemann surface, necessarily of genus \( g \). Define the natural projection \( \pi:\RS\to\overline\C \) by \( \pi(z,w)=z \). We shall use bold lower case letters \( \z \), \( \s \), etc. to denote points on \( \RS \) with natural projections \( z \), \( s \), etc. We utilize the symbol \( \cdot^* \) for the conformal involution on \( \RS \), that is, \( \z^*=(z,-w) \) if \( \z=(z,w) \). Clearly, the set of ramification points of \( \RS \), namely \( \boldsymbol E=\{\e_0,\ldots,\e_{2g+1}\} \), is invariant under \( \cdot^* \).

\begin{definition}
\label{def:g-fun}
Given \( \ve \in \RS\setminus\boldsymbol E \), denote by \( g(\cdot,\ve) \) a function that is harmonic in \( \RS\setminus\{ \ve,\ve^*\} \), normalized so that \( g(\e_0,\ve) = 0 \), and such that
\[
g(\z,\ve) + \left\{\begin{array}{rl}
\log|z-v|, & |v|<\infty, \smallskip \\
-\log|z|, & v=\infty,
\end{array}
\right.
\qandq g(\z,\ve) - \left\{\begin{array}{rl}
\log|z-v|, & |v|<\infty, \smallskip \\
-\log|z|, & v=\infty,
\end{array}
\right.
\]
are harmonic as functions of \( \z \) around \( \ve \) and \( \ve^* \), respectively. For completeness, put \( g(\cdot,\e)\equiv 0 \) for \( \e\in\boldsymbol E \).
\end{definition}

Such a function always exists as it is simply the real part of an integral of the third kind differential with poles at \( \ve \) and \( \ve^* \) that have residues \( -1 \) and \( 1 \), respectively, and whose periods are purely imaginary. It readily follows from the maximum principle for harmonic functions that
\begin{equation}
\label{g-symmetry}
g(\z,\ve) + g(\z^*,\ve) = g(\z,\ve) + g(\z,\ve^*) \equiv 0 \qforq \z,\ve\in\RS.
\end{equation}

In what follows, we designate the symbol \( \mathcal V \) to stand for an interpolation scheme
\begin{equation}
\label{scheme}
\mathcal V = \big\{ V_{2n} \big\}_{n=1}^\infty, \quad V_{2n} =\big\{ v_{2n,i} \big\}_{i=1}^{2n}.
\end{equation}
Given \( v\in\overline\C\setminus E \), it will also be convenient to denote by \( \mathcal V_v \) the interpolation scheme that consists only of points \( v \).
The following definition is an extension of the one given in \cite{NutS77} to general interpolation schemes and the one given in \cite{BY09c} to the case \( g>0 \).

\begin{definition}
\label{def:symmetry}
Let \( \Delta \) be a system of open analytic arcs together with the set \( E \) of their endpoints and \( \mathcal V \) be an interpolation scheme in \( D:=\overline\C\setminus\Delta \). Further, let \( \RS \) be given by \eqref{R}. We say that \( \Delta \) is symmetric with respect to \( (\RS,\mathcal V) \) if
\begin{itemize}
\item[(i)] \( \RS\setminus\bd \), \( \bd :=\pi^{-1}(\Delta) \), consists of two disjoint connected open sets, say \( D^{(0)} \) and \( D^{(1)} \), and no closed subset of \( \Delta \) has this property; 
\item[(ii)] the sums \( \sum_{i=0}^{2n} g\big(\cdot,v_{2n,i}^{(0)}\big) \) are uniformly bounded above and below on \( \bd \) and go to \( -\infty \) locally uniformly in \( D^{(1)} \),  where \( v^{(i)}=\pi^{-1}(v)\cap D^{(i)} \) for \( v\in D \).
\end{itemize}
\end{definition}

The first condition in Definition~\ref{def:symmetry} says that \( \Delta \) does not separate the plane and can serve as a branch cut for \( w(z) \), see \eqref{R}, which has a non-zero jump across every subarc of \( \Delta \). The second one is essentially a non-Hermitian Blaschke-type condition.

To reconstruct the setting of \cite{NutS77}, put \( \mathcal V = \mathcal V_\infty \) in Definition~\ref{def:symmetry}. Then the second condition and \eqref{g-symmetry} imply that \( (-1)^ig\big(z^{(i)},\infty^{(0)}\big)>0 \) for \( z\in D \). Thus, by taking into account the first condition, we get that \( \bd :=\big\{\s: g\big(\s,\infty^{(0)}\big) = 0\big\} \). Consequently, we get that \( g\big(z^{(i)},\infty^{(0)}\big) = (-1)^ig_D(z) \), where \( g_D(z) \) is the Green function for \( D \) with pole at infinity. Therefore, the harmonic continuation of \( g_D(z) \) across each subarc of \( \Delta \) is given by \( -g_D(z) \). As we show later at the beginning of Section~\ref{sec:sym-proof}, this is equivalent to the S-property \( \partial g_D/\partial \boldsymbol n_+ = \partial g_D / \partial \boldsymbol n_- \) on \( \Delta \).

The connection between Definition~\ref{def:symmetry} and the notions of symmetry from Theorem~\ref{thm:stahl} and Definition~\ref{def:sym-GR} is rather straightforward.

\begin{proposition}
\label{prop:connections}
In the setting of Theorem~\ref{thm:stahl}(iv), set \( E \) to be the subset of \( E_0\cup E_1 \) comprised of all the points with odd bifurcation index, i.e., the branch points of \( h_{D_f} \). Then \( \Delta_f \) is symmetric with respect to \( (\RS ,\mathcal V_\infty) \) in the sense of Definition~\ref{def:symmetry}, where \( \RS \) is associated to \( E \) via \eqref{R}.

In another connection, let \( \Delta \) be symmetric with respect to \( (\RS,\mathcal V) \) in the sense of Definition~\ref{def:symmetry}. Assume that the interpolation scheme \( \mathcal V \) is separated from \( \Delta \) and asymptotically approaches the measure \( \nu \), \( \supp(\nu)\subset D=\overline\C\setminus\Delta \), in the sense of \eqref{asymp-measure}. Then \( \Delta \) is symmetric with respect to \( \nu \) in the sense of Definition~\ref{def:sym-GR}. 
\end{proposition}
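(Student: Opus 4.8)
The plan is to handle the two assertions separately, since they go in opposite directions. For the first assertion, I would start from the structure of $h_{D_f}$ supplied by Theorem~\ref{thm:stahl}(iv): writing $E$ for the branch points of $h_{D_f}$ (the points of odd bifurcation index), the product formula shows that $h_{D_f}^2$ has simple zeros precisely at the points of $E$ and even-order zeros elsewhere, so $h_{D_f} = R(z)\prod_{e\in E}(z-e)^{-1/2}$ for a rational $R$, and the two-sheeted surface $\RS$ associated with $E$ via \eqref{R} is exactly the surface on which $h_{D_f}$ becomes single-valued. Then I would identify $g(\cdot,\infty^{(0)})$ with $\pm g_{D_f}$: by the S-property in Theorem~\ref{thm:stahl}(iii), the harmonic continuation of $g_{D_f}$ across each analytic arc $\Delta_j$ is $-g_{D_f}$, which is precisely the statement that the function equal to $g_{D_f}\circ\pi$ on one sheet and $-g_{D_f}\circ\pi$ on the other is a well-defined harmonic function on $\RS$ with the logarithmic singularities prescribed in Definition~\ref{def:g-fun} (after fixing the normalization by a choice of which ramification point is $\e_0$). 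Uniqueness of $g(\cdot,\infty^{(0)})$ then forces the identification. Condition (i) of Definition~\ref{def:symmetry} is the topological input that $\overline\C\setminus\Delta_f$ is connected and minimal with the single-valuedness property (Theorem~\ref{thm:stahl}(i) and the arc decomposition in (ii)), and condition (ii) reduces, via this identification, to $g_{D_f}>0$ in $D_f\setminus\{\infty\}$ together with the boundary values vanishing q.e.\ on $\Delta_f$ — exactly the defining properties of the Green function recalled in the footnote to Theorem~\ref{thm:stahl}(iii). One point that needs care: $g_{D_f}$ vanishes on $\Delta_f$ only quasi-everywhere, whereas Definition~\ref{def:symmetry}(ii) asks for uniform boundedness on all of $\bd$; here one uses that away from $E_0\cup E_1$ the arcs are analytic and $g_{D_f}$ extends harmonically through them, vanishing identically, so boundedness holds genuinely everywhere on the analytic part, and near the finitely many exceptional endpoints $g_{D_f}$ is bounded because $h_{D_f}^2$ has at worst a simple pole there.

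For the second assertion I would argue in the reverse direction. Given symmetry with respect to $(\RS,\mathcal V)$ and that $\mathcal V$ is separated from $\Delta$ and asymptotic to $\nu$, I want to produce the S-property of $g_D^\nu$ in the sense of Definition~\ref{def:sym-GR}. The key object is the function
\[
G(\z) := \int g(\z,\ve)\,\dd\tilde\nu(\ve),
\]
where $\tilde\nu$ is the measure on $D^{(0)}$ obtained by lifting $\nu$ to the $0$-sheet, i.e.\ $\tilde\nu = (\pi|_{D^{(0)}})^{-1}_*\nu$. Using the symmetry \eqref{g-symmetry}, $G(\z)+G(\z^*)\equiv0$, so $G$ is a well-defined harmonic function on $\RS$ that is odd under the involution; restricted to the $0$-sheet it is a function $u_0$ on $D$ with a logarithmic singularity $-\log|z-u|$ at each point of $\supp(\nu)$ (pulled back), and on the $1$-sheet it is $-u_0$. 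Its boundary values on $\Delta$ vanish (the two sheets meet on $\bd$ where $G$ is both $+u_0$ and $-u_0$, hence $0$), and $u_0$ is harmonic and positive in $D\setminus\supp(\nu)$. Comparing singularities and boundary behaviour with the defining footnote properties of the Green potential, I get $u_0 = g_D^\nu$ on $D$, i.e.\ $G(z^{(0)}) = g_D^\nu(z)$ and $G(z^{(1)}) = -g_D^\nu(z)$. This says the harmonic continuation of $g_D^\nu$ across each analytic subarc of $\Delta$ is $-g_D^\nu$, which — by the same elementary argument promised at the start of Section~\ref{sec:sym-proof}, namely that if a harmonic function and its negative both solve the same one-sided Dirichlet problem on a two-sided analytic arc then the two one-sided normal derivatives at a point where the function vanishes must coincide — is exactly the S-property $\partial g_D^\nu/\partial\boldsymbol n_+ = \partial g_D^\nu/\partial\boldsymbol n_-$ a.e.\ on $\Delta$.

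The step I expect to be the real obstacle is the passage from the discrete condition in Definition~\ref{def:symmetry}(ii) — a Blaschke-type bound on the finite sums $\sum_i g(\cdot,v_{2n,i}^{(0)})$ — to the continuous statement about $G(\z) = \int g(\z,\ve)\,\dd\tilde\nu(\ve)$. A priori Definition~\ref{def:symmetry}(ii) is a statement only about the particular scheme $\mathcal V$, not about its limiting measure; I need to know that the normalized sums $\frac1{2n}\sum_i g(\cdot, v_{2n,i}^{(0)})$ converge to $G$. This is where the hypotheses that $\mathcal V$ is \emph{separated} from $\Delta$ and asymptotic to $\nu$ do the work: separation guarantees $g(\z,\cdot)$ is continuous and bounded on a neighbourhood of $\supp(\nu)$ uniformly for $\z$ in compact subsets of $\RS$ away from $\bd$, so that the weak$^*$ convergence $\nu_n\cws\nu$ (lifted to the $0$-sheet) yields locally uniform convergence of the potentials by the standard principle of descent / lower-envelope argument, and one must also check the behaviour as $\z\to\bd$, using the uniform boundedness in Definition~\ref{def:symmetry}(ii) to control the limit on $\Delta$ itself. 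Once this convergence is in hand, the identification $G = g_D^\nu$ and hence the S-property follow as above; the remainder is the routine verification that $G$ really does have the claimed singularities and boundary values, which is immediate from Definition~\ref{def:g-fun} and \eqref{g-symmetry}.
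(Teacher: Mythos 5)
Your proof of the first assertion follows the paper's route and is essentially correct, including the extra care you take at the isolated endpoints in $E_0\cup E_1$.

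Your proof of the second assertion has a genuine gap at a decisive step. You assert that the boundary values of $G$ on $\bd$ vanish ``because the two sheets meet on $\bd$ where $G$ is both $+u_0$ and $-u_0$, hence $0$.'' This is not correct. At a point $\s\in\bd$ that is not a ramification point, the two sides of $\s$ --- one abutting $D^{(0)}$, the other $D^{(1)}$ --- project under $\pi$ to the two \emph{different} sides of $s:=\pi(\s)$ on $\Delta$. Continuity of $G$ at $\s$ therefore only gives $u_{0+}(s)=-u_{0-}(s)$, relating the two one-sided boundary values of $u_0$; it does not give $u_0=0$ on $\Delta$. (Equivalently, $G(\s)$ and $G(\s^*)$ are values at two distinct points of $\bd$, and $G(\s)+G(\s^*)=0$ carries no information about either being zero.) Note also that $G$ as you define it depends only on $\RS$ and $\nu$ and not on $\Delta$ or $\mathcal V$, so it cannot vanish on every $\bd$ of every contour satisfying just Definition~\ref{def:symmetry}(i); some input from Definition~\ref{def:symmetry}(ii) is unavoidable. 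The vanishing $G|_{\bd}=0$ --- which is exactly what you need before you can identify $u_0$ with $g_D^\nu$ --- must be \emph{deduced} by first establishing that $\tfrac1{2n}\sum_i g\big(\cdot,v_{2n,i}^{(0)}\big)\to G$ and then dividing the uniform bound of Definition~\ref{def:symmetry}(ii) by $2n$. You do flag this convergence as ``the real obstacle,'' but you assign it the wrong role: in your write-up it reads as a consistency check after the vanishing has been secured, whereas it is in fact the only route to the vanishing you took for granted.

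For comparison, the paper sidesteps the potential-theoretic identification $u_0=g_D^\nu$ entirely. It lifts $(-1)^ig_D^{\nu_n}$ to $\RS$ (call it $g_{\nu_n}$), notes that $g_{\nu_n}-g_n$ with $g_n:=\tfrac1{2n}\sum_v g\big(\cdot,v^{(0)}\big)$ is harmonic in each $D^{(i)}$ with $o(1)$ traces on $\bd$ (using $g_{\nu_n}|_\bd=0$ and Definition~\ref{def:symmetry}(ii)), and invokes the maximum principle to get $g_{\nu_n}-g_n\to 0$ uniformly on $\RS$. Combining with $g_D^{\nu_n}\to g_D^\nu$ locally uniformly near $\Delta$ (separation plus weak$^*$ convergence), the lift of $(-1)^ig_D^\nu$ is exhibited as a local uniform limit of the harmonic functions $g_n$, hence harmonic across $\bd$, and the S-property follows from \eqref{observation}. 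This is more economical because harmonicity of a uniform limit of harmonic functions is automatic, with no need to match singularities and boundary data of a candidate Green potential.
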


Concerning the existence of symmetric contours, we can say the following. 

\begin{theorem}
\label{thm:scontours}
Given \( \RS \) as in \eqref{R} and \( v\in\overline\C\setminus E \), there always exists a contour \( \Delta_v \) symmetric with respect to \( (\RS,\mathcal V_v) \). Further, let \( c>0 \) be a constant such that \( L_c :=\{ s: g_{D_v}(s) = c \} \) is a smooth Jordan curve, where \( D_v:=\overline\C\setminus\Delta_v \). If \( \Xi(z) \) is a conformal function in the interior of \( L_c \) such that \( \Xi(e) =e \) for every \( e\in E \), then there exists an interpolation scheme \( \mathcal V \) in \( \overline\C\setminus\Xi(\Delta) \) such that \( \Xi(\Delta) \) is symmetric with respect to \( (\RS,\mathcal V) \).
\end{theorem}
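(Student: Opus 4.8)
The plan is to establish the two assertions in turn. For the first, existence of a contour $\Delta_v$ symmetric with respect to $(\RS,\mathcal V_v)$, I would proceed constructively via the quadratic differential on $\RS$ determined by $v$. Concretely, let $\omega_v$ be the third-kind differential on $\RS$ with simple poles at $v^{(0)}$ and $v^{(1)}=(v^{(0)})^*$ of residues $-1$ and $+1$ respectively, normalized so that all its periods are purely imaginary; then $g(\cdot,\ve)$ from Definition~\ref{def:g-fun} is $\re\int\omega_v$, and $h_v:=(\omega_v/\dd z)^2$ descends to a rational quadratic differential $h_v(z)\,\dd z^2$ on $\overline\C$ whose finite critical points of odd order are exactly the $2g+2$ points of $E$ (since $\omega_v$ has simple zeros at the ramification points or, more precisely, $\omega_v^2$ has the right divisor). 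I would then take $\Delta_v$ to be the union of the orthogonal critical trajectories of $h_v(z)\,\dd z^2$ emanating from the points of $E$; the level set $\{\s: g(\s,v^{(0)})=0\}$ is exactly $\pi^{-1}(\Delta_v)$. The structure theory of quadratic differentials (Jenkins--Strebel theory, or the explicit hyperelliptic description as in \cite{NutS77}) guarantees that these trajectories connect the points of $E$ pairwise in a tree-like configuration, that $\overline\C\setminus\Delta_v$ is connected, and that $\RS\setminus\pi^{-1}(\Delta_v)$ splits into the two sheets $D^{(0)}=\{g(\cdot,v^{(0)})<0\}$ and $D^{(1)}=\{g(\cdot,v^{(0)})>0\}$, verifying Definition~\ref{def:symmetry}(i). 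Condition~(ii) for $\mathcal V_v$ is then immediate: the relevant sum is just $2n\,g(\cdot,v^{(0)})$ (all interpolation points equal $v$), which vanishes on $\bd$ and tends to $-\infty$ locally uniformly on $D^{(1)}$ by construction and \eqref{g-symmetry}. The non-separation minimality (``no closed subset of $\Delta$ has this property'') follows because removing any open subarc reconnects the two sheets.

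For the second assertion I would push the symmetric configuration forward by $\Xi$ and build the required interpolation scheme $\mathcal V$ by hand. Set $\widetilde\Delta:=\Xi(\Delta_v)$ and $\widetilde D:=\overline\C\setminus\widetilde\Delta$; since $\Xi$ fixes every $e\in E$, the surface $\RS$ attached to $E$ via \eqref{R} is unchanged, and I need an interpolation scheme satisfying Definition~\ref{def:symmetry}(ii) on the pushed-forward surface $\widetilde\RS=\pi^{-1}(\widetilde\Delta)$-split surface. The idea is to choose the interpolation points so that the sum $\frac1{2n}\sum g(\cdot,v_{2n,i}^{(0)})$ approximates, up to a bounded harmonic error, the pullback under $\Xi^{-1}$ of $g(\cdot,v^{(0)})$, which by hypothesis is the Green function $g_{D_v}$ lifted with signs $\pm$ on the two sheets. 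More precisely, $\Xi$ is conformal on the interior of $L_c$ and fixes $E\subset\operatorname{int}L_c$; the measure $\mu$ on $\widetilde D$ that I want the scheme to be asymptotic to is the balayage/transport under $\Xi$ of the equilibrium-type measure whose Green potential on $D_v$ equals $g_{D_v}$ (equivalently, harmonic measure data transported by $\Xi$). Since $\Xi$ maps $\Delta_v$ to $\widetilde\Delta$ and $L_c$ to a Jordan curve separating $\widetilde\Delta$ from $\infty$, the transported potential is harmonic in $\widetilde D$ outside $\supp\mu$, positive, with the correct $\pm$-sheet symmetry via \eqref{g-symmetry}, so it vanishes on $\bd$ and is $-\infty$ toward the $D^{(1)}$ side. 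Discretizing $\mu$ into Fekete-type point sets $V_{2n}$ (with the one spare point sent to $\infty$, as the definition allows) gives sums that are uniformly bounded above and below on the compact $\bd$ and diverge to $-\infty$ on $D^{(1)}$; the uniform two-sided bound on $\bd$ is where one uses that $\mu$ has bounded potential near $\widetilde\Delta$, which holds because $\widetilde\Delta$ is analytic away from $E$ and $\Xi'$ is bounded and nonvanishing there. I should also confirm Definition~\ref{def:symmetry}(i) for $\widetilde\Delta$: that is purely topological and is inherited from $\Delta_v$ because $\Xi$ is a homeomorphism of $\operatorname{int}L_c$ fixing $E$, so connectivity of $\widetilde D$ and the two-piece splitting of $\widetilde\RS\setminus\bd$ carry over verbatim.

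The main obstacle I expect is the second condition in Definition~\ref{def:symmetry} for the pushed-forward contour, specifically producing an interpolation scheme whose Green-sum is \emph{uniformly bounded both above and below on all of $\bd$} rather than merely converging. Convergence of $\nu_n$ to the target measure $\mu$ only yields control in capacity, not the pointwise two-sided bound required here; to get the latter I would need the discretization of $\mu$ to be sufficiently regular (e.g., points spread so that $\frac1{2n}\sum\log|z-v_{2n,i}|$ stays within $O(1)$ of $U^\mu(z)$ uniformly for $z\in\widetilde\Delta$), which forces a careful choice—Fekete points, or points placed on level lines $\Xi(L_{c'})$ with carefully chosen radii—and a quantitative estimate of the discrepancy. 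The fact that $\Xi$ fixes $E$ is essential here: it ensures the transported potential still has exactly the right logarithmic singularities matching the ramification structure of $\RS$, so no spurious branch-cut obstructions appear; without it the two-sheet decomposition in~(i) could fail. A secondary technical point is verifying that $\widetilde\Delta$ still consists of \emph{analytic} arcs with endpoint set exactly $E$—this is where conformality of $\Xi$ (not merely homeomorphy) on $\operatorname{int}L_c$ enters, guaranteeing $\widetilde\Delta=\Xi(\Delta_v)$ is analytic wherever $\Delta_v$ is and that the fixed points $E$ remain the branch points of $w(z)$ on $\widetilde D$.
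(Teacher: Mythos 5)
There is a genuine gap, and you have in fact named it yourself. Your outline for the second (and main) assertion stops precisely where the real work begins: after pushing $g_{D_v}(\cdot,v)$ forward by $\Xi$ and identifying a target measure on $\Xi(L_c)$, you observe that ``convergence of $\nu_n$ to the target measure $\mu$ only yields control in capacity, not the pointwise two-sided bound required,'' and then propose ``Fekete-type'' or otherwise ``carefully chosen'' points. But Fekete or weighted-Fekete discretizations of a boundary measure have discrepancy of size $O(\log n/n)$, so after multiplying by $2n$ (as Definition~\ref{def:symmetry}(ii) requires, since it concerns the \emph{unnormalized} sums $\sum_{i=0}^{2n}g(\cdot,v_{2n,i}^{(0)})$) the error is $O(\log n)$, which is not bounded. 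The paper avoids this by first establishing the exact integral representation \eqref{representation},
\[
g(\z)\;=\;\int_{\partial O}g(\z,s)\,\frac{\partial g}{\partial\boldsymbol n}(s)\,|\dd s|,\qquad \z\in\pi^{-1}(O),
\]
via several applications of Green's identity on $\RS$ together with the symmetry \eqref{flip}, and then discretizing $\partial O$ into $2n$ arcs of \emph{equal mass} for $(\partial g/\partial\boldsymbol n)\,|\dd s|$ and choosing one point in each arc. The discretization error at a point $\x\in\bd$ is then controlled, term by term, by $\max_i\operatorname{diam}(L_{2n,i})=O(1/n)$ via the Lipschitz estimate \eqref{gE} for $v\mapsto g(\x,v^{(0)})$; this produces an error of size $O(1)$ after multiplying by $n$, giving exactly the required uniform two-sided bound on $\bd$. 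The divergence to $-\infty$ on $D^{(1)}$ is then extracted via Helly's selection principle and the maximum principle for subharmonic functions. Both the representation formula and the equal-mass discretization (rather than Fekete-type sets) are essential to the argument and are missing from your proposal.

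On the first assertion (existence of $\Delta_v$) your route differs from the paper's. You go via the quadratic differential $\omega_v^2$ descended to $\overline\C$ and its orthogonal critical trajectories; the paper simply observes that $g(\cdot,\ve)$ tends to $+\infty$ at $\ve$, to $-\infty$ at $\ve^*$, and is harmonic elsewhere, so by the maximum principle its zero level set $\boldsymbol\Delta_\ve$ separates $\RS$ into exactly two components, each containing one of $\ve,\ve^*$; involution-symmetry then follows from \eqref{g-symmetry}, and $\Delta_v:=\pi(\boldsymbol\Delta_\ve)$. Your trajectory argument gives the same set but requires verifying, e.g., that the anti-invariant third-kind differential $\omega_v$ does not vanish at the ramification points (so that the pushed-down quadratic differential has simple poles, not regular points, at $E$); this is not addressed in your sketch, whereas the paper's level-set argument bypasses the local structure entirely. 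This part of your proposal is salvageable, but the level-set argument is shorter and avoids unneeded trajectory-structure theory.

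Finally, your remarks that $\Xi$ fixing $E$ is needed to keep $\RS$ the same surface and that conformality of $\Xi$ preserves analyticity of the arcs are both correct and consistent with the paper, as is the observation that verifying Definition~\ref{def:symmetry}(i) for $\Xi(\Delta_v)$ is purely topological.
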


We prove Proposition~\ref{prop:connections} and Theorem~\ref{thm:scontours} in Section~\ref{sec:sym-proof}.

\subsection{Nuttall-Szeg\H{o} Functions}

Given \( \Delta \) as in Definition~\ref{def:symmetry}(i), we realize \( \RS \), the Riemann surface of \( w \), as 
\begin{equation}
\label{realization}
\RS = D^{(0)} \cup \bd \cup D^{(1)}, \quad \bd := \pi^{-1}(\Delta), \quad D^{(0)}\cup D^{(1)} := \pi^{-1}(D),
\end{equation}
where the open sets \( D^{(i)} \) are connected and \( \pi(D^{(i)}) = D \), \( i\in\{0,1\} \). For convenience we shall also denote by \( z^{(i)} \) the lift of \( z\in D \) to \( D^{(i)} \). We denote by \( \big\{\ualpha_k,\ubeta_k\big\}_{k=1}^g \) a homology basis\footnote{The surface cut along the cycles of a homology basis becomes simply connected, \( \ualpha_k,\ubeta_k \) intersect once and form the right pair at the point of intersection, different \( \ualpha \)-cycles do not intersect as well as different \( \ubeta \)-cycles.} on \( \RS \) from which we only require that each cycle is \emph{involution-symmetric} (i.e., \( \ugamma=\{\z^*~|~\z\in\ugamma\} \) ) and has only finitely many points in common with \( \bd \), see Figure~\ref{fig:2}.

\begin{figure}[!ht]
\centering
\includegraphics[scale=.7]{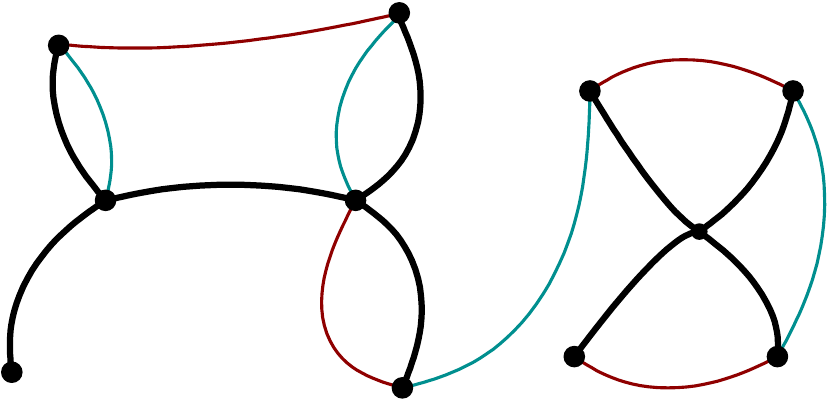}
\begin{picture}(0,0)
\put(-182,4){\( \e_0 \)}
\put(-164,40){\( \e_1 \)}
\put(-172,72){\( \e_2 \)}
\put(-86,76){\( \e_3 \)}
\put(-93,39){\( \e_4 \)}
\put(-86,-2){\( \e_5 \)}
\put(-65,61){\( \e_6 \)}
\put(-7,60){\( \e_7 \)}
\put(-10,6){\( \e_8 \)}
\put(-68,6){\( \e_9 \)}
\put(-130,76){\( \ualpha_1 \)}
\put(-120,20){\( \ualpha_2 \)}
\put(-32,72){\( \ualpha_3 \)}
\put(-40,6){\( \ualpha_4 \)}
\put(-147,54){\( \ubeta_1 \)}
\put(-115,60){\( \ubeta_2 \)}
\put(-70,30){\( \ubeta_3 \)}
\put(0,35){\( \ubeta_4 \)}
\put(-200,70){\( D^{(0)} \)}
\end{picture}
\caption{\small The set of ramification points \( \boldsymbol E =\{\e_0,\ldots,\e_9 \} \) of \( \RS \) as seen from \( D^{(0)} \) and ``half '' of the cycle \( \bd \) and the cycles of the homology basis. Due to involution-symmetry, the picture in \( D^{(1)} \) is identical. The point where four subarcs of \( \Delta \) meet is not a ramification point of \( \RS \). }
\label{fig:2}
\end{figure}

Denote by \( \vec\Omega:=\left(\Omega_1,\ldots,\Omega_g\right)^\mathsf{T} \) the column vector of \( g \) linearly independent holomorphic differentials\footnote{It holds that \( \Omega_k(\z) = (L_k/w)(\z)\dd z\), where \( L_k(z) \) is a certain polynomial of degree at most \( g-1 \) lifted to \( \RS \) and \( w(\z) := (-1)^iw(z) \) for \( \z\in D^{(i)} \).} normalized so that \( \oint_{\ualpha_k}\vec\Omega = \vec e_k \), where \( \{\vec e_k\}_{k=1}^g \) is the standard basis for \( \R^g \) and \( \vec e^\mathsf{T} \) is the transpose of \( \vec e \). Set
\begin{equation}
\label{B}
\boldsymbol B := \left[ \oint_{\ubeta_j}\Omega_k\right]_{j,k=1}^g.
\end{equation}
It is known that the matrix \( \boldsymbol B \) is symmetric and has positive definite imaginary part. 

A divisor on \( \RS \) is a finite linear combination of points from \( \RS \) with integer coefficients. The degree of a divisor is the sum of its coefficients. The divisor is called \emph{effective} if all the coefficients are non-negative. We define Abel's map on divisors of \( \RS \) by
\begin{equation}
\label{abel-map}
\am\left(\sum n_j\z_j\right) := \sum n_j\int_{\e_0}^{\z_j}\vec\Omega.
\end{equation}
A divisor \( \mathcal D = \sum n_j \z_j - \sum m_j\x_j\), \( n_j,m_j>0 \), is called \emph{principal} if there exists a rational function on \( \RS \) that has a zero at every \( \z_j \) of multiplicity \( n_j \), a pole at every \( \x_j \) of order \( m_j \), and otherwise is non-vanishing and finite. By Abel's theorem, \( \mathcal D \) is principal if and only if its degree is zero and
\[
\am(\mathcal D) \equiv \vec 0 \quad \big(\mdp\vec\Omega\:\big),
\]
where the equivalence of two vectors \( \vec c,\vec u\in \C^g \) is defined by \( \vec c\equiv \vec u \) \( \big(\mdp\vec\Omega\:\big) \) if and only if \( \vec c - \vec u = \vec j + \boldsymbol B\vec m \), for some \( \vec j,\vec m\in\Z^g \).

For any point \( \ve\in\RS\setminus\boldsymbol E \) there exists a unique differential, say \( G_\ve \), such that it is holomorphic on \( \RS\setminus\{\ve,\ve^*\} \), has polar singularities at \( \ve \) and \( \ve^* \) with respective residues \( -1 \) and \( 1 \), and whose periods are purely imaginary. Given \( \mathcal V \) as in \eqref{scheme}, define vectors \( \vec\omega_n \) and \( \vec \tau_n \) by
\begin{equation}
\label{constants}
(\vec\omega_n)_k:=-\frac1{4\pi\mathrm{i}}\sum_{i=1}^{2n}\oint_{\ubeta_k}G_{v_{2n,i}^{(0)}} \qandq (\vec\tau_n)_k:=\frac1{4\pi\mathrm{i}}\sum_{i=1}^{2n}\oint_{\ualpha_k}G_{v_{2n,i}^{(0)}},
\end{equation}
where we adopt the notation \( (\vec c)_k := c_k \) for \( \vec c=(c_1,\ldots,c_g) \). Notice that these constants are real. Given a continuous function \( \rho \) on \( \Delta \), we are interested in the solutions of the following Jacobi inversion problem: find an effective divisor \( \mathcal D_n \) of degree \( g \) such that
\begin{equation}
\label{main-jip}
\am(\mathcal D_n)\, \equiv \, \am\big(g\infty^{(1)}\big) + \vec c_\rho + \vec\omega_n + \boldsymbol B\vec\tau_n,  \quad \big(\mdp \vec\Omega\:\big),
\end{equation}
where \( \vec c_\rho := \frac1{2\pi\mathrm{i}}\oint_{\bd}\log(\rho\circ\pi)\vec\Omega \). This problem is always solvable and the solution is unique up to a principal divisor. That is, if \( \mathcal D_n - \big\{\mbox{ principal divisor }\big\} \) is an effective divisor, then it also solves \eqref{main-jip}. Immediately one can see that the subtracted principal divisor should have a positive part of degree at most \( g \). As \( \RS \) is hyperelliptic, such divisors come solely from rational functions on \( \overline\C \) lifted to \( \RS \). In particular, such principal divisors are involution-symmetric. Hence, if a solution of \eqref{main-jip}  contains at least one involution-symmetric pair of points, then replacing this pair by another such pair produces a different solution of \eqref{main-jip}. However, if a solution does not contain such a pair, then it solves \eqref{main-jip} uniquely. 

\begin{proposition}
\label{prop:nuttall-szego}
Let \( \rho \) be a H\"older continuous and non-vanishing function on \( \Delta \). If \eqref{main-jip} is uniquely solvable for a given index \( n \), then there exists a sectionally meromorphic in \( \RS\setminus\bd \) function \( \Psi_n \) whose zeros and poles there are described by the divisor
\begin{equation}
\label{PsiZeros}
(n-g)\infty^{(1)}+\mathcal{D}_n-n\infty^{(0)},
\end{equation}
and whose traces on \( \bd \) are continuous and satisfy
\begin{equation}
\label{PsiBoundary}
\Psi_{n-}(\x) = \big(\rho(x)/v_{2n}(x)\big) \Psi_{n+}(\x), \quad \x\in\bd.
\end{equation}
Moreover, if \( \Psi \) is a sectionally meromorphic function in \( \RS\setminus\bd \)  satisfying \eqref{PsiBoundary} whose divisor has a form \( 
(n-g)\infty^{(1)}+\mathcal D -n\infty^{(0)} \) for some effective divisor \( \mathcal D \), then \( \Psi \) is a constant multiple of~\( \Psi_n \). 
\end{proposition}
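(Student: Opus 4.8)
The plan is to construct $\Psi_n$ explicitly as a product of three building blocks — a ``divisor factor'' carrying the prescribed zeros and poles, an ``exponential factor'' realizing the multiplicative jump by $\rho/v_{2n}$ on $\bd$, and a ``theta-quotient factor'' that restores single-valuedness on $\RS$ — and then to prove uniqueness by a standard Liouville-type argument on the compact surface $\RS$.

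First I would build the exponential factor. Write $v_{2n}(z) = \prod_{i=1}^{2n}(z - v_{2n,i})$ (up to the convention at infinite nodes) and consider the function $\rho\circ\pi$ on $\bd$; take $\log(\rho\circ\pi)$, which is well defined up to additive constants and is H\"older continuous since $\rho$ is H\"older and non-vanishing. I would form a function of the form $\exp\bigl(\tfrac1{2\pi\ic}\oint_\bd \log(\rho\circ\pi)(\s)\,\omega_{\z\,\s}\bigr)$, where $\omega_{\z\,\s}$ is a Cauchy-type (third-kind) kernel on $\RS$ with poles of residue $\pm1$ at $\s$ and $\s^*$ and purely imaginary periods — together with the product $\prod_i \exp\bigl(-\int G_{v_{2n,i}^{(0)}}\bigr)$ to account for the $v_{2n}$ in the denominator and to move the extra zeros/poles to $\infty^{(0)}$, $\infty^{(1)}$. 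By the Plemelj formula this factor has exactly the boundary relation \eqref{PsiBoundary}; the price is that, being built from integrals of differentials with nonzero $\ualpha$- and $\ubeta$-periods, it is multivalued on $\RS$, and its periods are precisely encoded by the vectors $\vec c_\rho$, $\vec\omega_n$, $\vec\tau_n$ in \eqref{constants} and in the definition of $\vec c_\rho$.

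Next I would correct the multivaluedness with Riemann theta functions. Using the Jacobi inversion problem \eqref{main-jip}, whose solution divisor is $\mathcal D_n$, I would form a ratio of theta functions $\theta\bigl(\am(\z) - \am(\mathcal D_n) - \vec K\bigr)\big/\theta\bigl(\am(\z) - \am(g\infty^{(1)}) - \vec K\bigr)$ (with $\vec K$ the Riemann constant), lifted and combined with the divisor factor $\prod$ of elementary local parameters so that the total zero/pole divisor becomes exactly \eqref{PsiZeros}. The key computation is that the $\ualpha$- and $\ubeta$-periods picked up by this theta-quotient, governed by the quasi-periodicity of $\theta$ and by $\am(\mathcal D_n)$, cancel against those of the exponential factor precisely because \eqref{main-jip} was set up with the shift $\vec c_\rho + \vec\omega_n + \boldsymbol B\vec\tau_n$; here the hypothesis that \eqref{main-jip} is \emph{uniquely} solvable guarantees that the numerator theta does not vanish identically (equivalently, $\mathcal D_n$ is non-special and contains no involution-symmetric pair), so the quotient is a genuine nonzero meromorphic object with the stated divisor. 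Multiplying the three factors yields $\Psi_n$; one checks it is sectionally meromorphic in $\RS\setminus\bd$ with divisor \eqref{PsiZeros}, has continuous traces, and satisfies \eqref{PsiBoundary}.

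For uniqueness, suppose $\Psi$ is another such function with divisor $(n-g)\infty^{(1)} + \mathcal D - n\infty^{(0)}$ for some effective $\mathcal D$ of degree $g$. Then $\Psi/\Psi_n$ is sectionally meromorphic in $\RS\setminus\bd$ and, by \eqref{PsiBoundary} for both, has \emph{no jump} across $\bd$; hence it extends to a global meromorphic function on $\RS$ whose divisor is $\mathcal D - \mathcal D_n$, an effective-minus-effective divisor of degree $0$. So $\mathcal D - \mathcal D_n$ is principal; but by Abel's theorem and the unique solvability of \eqref{main-jip}, the only effective degree-$g$ divisor equivalent to $\mathcal D_n$ is $\mathcal D_n$ itself, forcing $\mathcal D = \mathcal D_n$ and $\Psi/\Psi_n$ to be a function on $\RS$ with no zeros or poles, i.e.\ a constant. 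I expect the main obstacle to be the bookkeeping of periods in the second and third paragraphs — verifying that the period vectors of the exponential (Cauchy-kernel) factor are exactly $-2\pi\ic$ times (or the appropriate multiple of) $\vec\omega_n$, $\vec\tau_n$, $\vec c_\rho$, and that the theta-quotient's quasi-periods cancel them — since this is where the precise normalization \eqref{constants} and the choice of involution-symmetric homology basis are used, and where sign and factor-of-$2$ errors are easy to make.
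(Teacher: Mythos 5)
Your proposal is correct and follows essentially the same route as the paper: construct $\Psi_n$ as a product of Szeg\H{o}-type exponential factors (built from abelian integrals of third-kind differentials $G_{\ve}$ and a Cauchy-type integral of $\log\rho$ against a discontinuous Cauchy kernel on $\RS$) and a theta-function quotient keyed to the solution $\mathcal D_n$ of \eqref{main-jip}, with \eqref{main-jip} engineered precisely so that the various $\ualpha$- and $\ubeta$-period contributions cancel --- in the paper the residual $\ualpha$-period discrepancy is absorbed by two further Szeg\H{o}-type factors $S_{\vec\tau_n}$, $S_{\vec m_n}$, which is exactly the ``bookkeeping of periods'' you flag as the main technical point. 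Your uniqueness step is in fact a touch cleaner than the paper's: you observe that $\Psi/\Psi_n$ extends meromorphically to all of $\RS$ with divisor $\mathcal D-\mathcal D_n$, hence $\mathcal D$ is an effective degree-$g$ divisor linearly equivalent to $\mathcal D_n$, so by Abel's theorem it also solves \eqref{main-jip} and unique solvability forces $\mathcal D=\mathcal D_n$; the paper instead invokes the hyperelliptic-specific fact that a principal divisor of the form $\mathcal D-\mathcal D_n$, with both parts effective of degree at most $g$, must come from a rational function lifted from $\overline\C$ (hence involution-symmetric) and concludes from the absence of such pairs in $\mathcal D_n$. Both arguments are valid and rest on the same hypothesis; yours sidesteps the special-divisor discussion.
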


Together with \( \Psi_n \) we shall need the following sequence of functions. 

\begin{proposition}
\label{prop:secondary}
Let an index \( n \) be such that \eqref{main-jip} is uniquely solvable. If \( \mathcal D_n \) does not contain \( \infty^{(0)} \), then there exists a unique, up to a constant factor, rational function \( \Upsilon_n \) on \( \RS \) such that
\[ 
(\Upsilon_n) + \mathcal D_n + \infty^{(1)} - \infty^{(0)}
\]
is an effective divisor, where \( (\Upsilon_n) \) is the divisor of the zeros and poles of \( \Upsilon_n \). Moreover, in this case \( \Upsilon_n \) always has a simple pole at \( \infty^{(1)} \).
\end{proposition}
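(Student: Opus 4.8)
The plan is to phrase everything in terms of Riemann--Roch spaces on $\RS$. For a divisor $\mathcal A$ on $\RS$ write $L(\mathcal A):=\{f\ \text{rational on}\ \RS:\ (f)+\mathcal A\ge 0\}$, let $\mathcal K$ be a canonical divisor, and set $i(\mathcal A):=\dim_\C L(\mathcal K-\mathcal A)$. Put $\mathcal D':=\mathcal D_n+\infty^{(1)}-\infty^{(0)}$, a divisor of degree $g$. A rational function $\Upsilon_n$ on $\RS$ for which $(\Upsilon_n)+\mathcal D_n+\infty^{(1)}-\infty^{(0)}$ is effective is exactly a nonzero element of $L(\mathcal D')$, so the existence-and-uniqueness assertion of the proposition amounts to the single equality $\dim_\C L(\mathcal D')=1$.

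The first step is to read off the consequence of the hypothesis. By the discussion preceding Proposition~\ref{prop:nuttall-szego}, unique solvability of \eqref{main-jip} means that $\mathcal D_n$ is the only effective divisor of degree $g$ in its linear equivalence class, i.e.\ $\dim_\C L(\mathcal D_n)=1$; since $\deg\mathcal D_n=g$, the Riemann--Roch theorem rewrites this as $i(\mathcal D_n)=0$, that is, no nonzero holomorphic differential vanishes on $\mathcal D_n$. I will also use that $L(\mathcal D_n)$ then consists precisely of the constant functions (it is one-dimensional and contains them, as $\mathcal D_n\ge 0$).

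Next I would compute $i(\mathcal D')$. If $\eta$ is a holomorphic differential with $(\eta)\ge\mathcal D'$, then $\eta$ has no poles, so the constraint that $\mathcal D'$ imposes at $\infty^{(0)}$ is vacuous precisely because $\infty^{(0)}\notin\supp\mathcal D_n$; consequently $(\eta)\ge\mathcal D_n+\infty^{(1)}\ge\mathcal D_n$, whence $\eta\in L(\mathcal K-\mathcal D_n)=\{0\}$. Thus $i(\mathcal D')=0$, and Riemann--Roch gives $\dim_\C L(\mathcal D')=\deg\mathcal D'-g+1+i(\mathcal D')=1$. This is exactly the existence of $\Upsilon_n$ together with its uniqueness up to a constant factor.

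Finally, the location of the pole. First, $L(\mathcal D_n-\infty^{(0)})=\{0\}$: this space is contained in $L(\mathcal D_n)$, hence in the constants, and a nonzero constant lies in $L(\mathcal D_n-\infty^{(0)})$ iff $\mathcal D_n-\infty^{(0)}\ge 0$, i.e.\ iff $\infty^{(0)}\in\supp\mathcal D_n$, which is excluded. Now fix $\Upsilon_n\in L(\mathcal D')\setminus\{0\}$. Evaluating $(\Upsilon_n)+\mathcal D'\ge 0$ at $\infty^{(0)}$ forces $\mathrm{ord}_{\infty^{(0)}}\Upsilon_n\ge 1$, so $\Upsilon_n$ has a zero at $\infty^{(0)}$; and if $\Upsilon_n$ were holomorphic at $\infty^{(1)}$ one would get $(\Upsilon_n)+\mathcal D_n-\infty^{(0)}\ge 0$ (the inequality at $\infty^{(1)}$ becoming automatic), i.e.\ $\Upsilon_n\in L(\mathcal D_n-\infty^{(0)})=\{0\}$, a contradiction. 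Hence $\Upsilon_n$ has a genuine pole at $\infty^{(1)}$, and $(\Upsilon_n)+\mathcal D'\ge 0$ caps its order there at $1+\mathrm{ord}_{\infty^{(1)}}\mathcal D_n$, so the pole is simple in the case $\infty^{(1)}\notin\supp\mathcal D_n$ that is relevant to the sequel. I anticipate no essential technical obstacle: the only thing that needs care is the correct parsing of the divisor $\mathcal D'$ --- the summand $+\infty^{(1)}$ supplies the room for the pole, the summand $-\infty^{(0)}$ imposes a zero --- together with the observation that the assumption $\infty^{(0)}\notin\supp\mathcal D_n$ is exactly what makes both $i(\mathcal D')=0$ and $L(\mathcal D_n-\infty^{(0)})=\{0\}$ hold.
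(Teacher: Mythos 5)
Your proposal is correct in substance, but it takes a genuinely different route from the paper. The paper never invokes Riemann--Roch: it seeks $\Upsilon_n$ with divisor $\widetilde{\mathcal D}_n+\infty^{(0)}-\mathcal D_n-\infty^{(1)}$ for an effective $\widetilde{\mathcal D}_n$ of degree $g$, notes via Abel's theorem that such $\widetilde{\mathcal D}_n$ are exactly the solutions of the second Jacobi inversion problem \eqref{secondary-jip}, and derives uniqueness from the hyperelliptic structure: non-uniqueness would force a solution containing an involution-symmetric pair, hence (after swapping pairs) one containing $\infty^{(1)}$, and then \eqref{secondary-jip} combined with the unique solvability of \eqref{main-jip} would put $\infty^{(0)}$ into $\mathcal D_n$, contradicting the hypothesis. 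Your replacement of this by the computation $i(\mathcal D')=0$ plus Riemann--Roch is legitimate and arguably more standard; what the paper's version buys is that it produces $\widetilde{\mathcal D}_n$ as the solution of a Jacobi inversion problem, which is precisely the form needed later for the theta-functional representation \eqref{PsiUpsilon} and for the control of the normalizing constants, and it directly yields the statement actually used there, namely that $\widetilde{\mathcal D}_n=(\Upsilon_n)+\mathcal D_n+\infty^{(1)}-\infty^{(0)}$ does not contain $\infty^{(1)}$.

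Two smaller remarks. First, in computing $i(\mathcal D')=\dim_\C L(\mathcal K-\mathcal D')$ you quantify only over holomorphic differentials, whereas a differential $\omega$ with $(\omega)\ge\mathcal D'$ is a priori allowed a simple pole at $\infty^{(0)}$; this is excluded in one line by the residue theorem (a meromorphic differential on a compact surface cannot have a single simple pole), after which your computation stands. Second, concerning the simple pole: your own observation that $L(\mathcal D_n-\infty^{(0)})=\{0\}$ already pins the pole order at $\infty^{(1)}$ to exactly $1+\mathrm{ord}_{\infty^{(1)}}\mathcal D_n$, since a pole of order at most $\mathrm{ord}_{\infty^{(1)}}\mathcal D_n$ would again place $\Upsilon_n$ in $L(\mathcal D_n-\infty^{(0)})$; this is the exact analogue of the paper's conclusion $\widetilde{\mathcal D}_n\not\ni\infty^{(1)}$, and, like the paper's argument, it gives a literally simple pole only when $\infty^{(1)}\notin\supp(\mathcal D_n)$. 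So your closing hedge is harmless --- the fact needed in the sequel is precisely that $\widetilde{\mathcal D}_n$ omits $\infty^{(1)}$, which makes $\gamma_n^*$ in \eqref{gammas} finite and non-zero --- but you could have stated the sharp pole order without any additional work.
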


Effective divisors of degree \( g \) can be considered as elements of \( \RS^g/\Sigma_g \), the quotient of \( \RS^g \) by the symmetric group \(\Sigma_g \), which is a compact topological space. Thus, it make sense to talk about the limit points of \( \{\mathcal{D}_n\} \). We shall assume that

\begin{condition}
\label{CondE}
There exists an infinite sequence $\N_*\subseteq\N$ such that the closure of \( \big\{\mathcal{D}_n\big\}_{n\in\N_*} \) in the \( \RS^g/\Sigma_g\)-topology contains no divisor with an involution-symmetric pair nor with \( \infty^{(0)} \).
\end{condition}

Observe that \eqref{main-jip} is necessarily uniquely solvable for every \( n\in \N_* \). 

\begin{proposition}
\label{prop:estimates}
Assume Condition~\ref{CondE} is satisfied. Then the functions \( \Psi_n \) and \( \Upsilon_n \) can be normalized so that
\begin{equation}
\label{maxPsi}
\big|\Psi_n\big(z^{(1)}\big)\big|^2,\big|(\Psi_n\Upsilon_n)\big(z^{(1)}\big)\big|^2 \leq C\exp\left\{ \sum_{i=0}^{2n} g\big(z^{(1)},v_{2n,i}^{(0)}\big) \right\}\left|\frac{w^2(z)}{v_{2n}(z)}\right|,
\end{equation}
\( \quad n\in\N_* \), on any closed set \( K\subset D \) for some constant \( C=C(K,\N_*) \).
\end{proposition}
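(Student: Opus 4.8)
The plan is to obtain the bound by producing an explicit representation of \( \Psi_n \) in terms of theta functions and the third-kind differentials \( G_{v_{2n,i}^{(0)}} \), and then controlling the non-analytic (exponential) part by the Green functions \( g(\cdot,\ve) \) from Definition~\ref{def:g-fun}. Concretely, I would first build the ``exponential factor'': set
\[
\mathcal{E}_n(\z) := \exp\left\{ \frac12\sum_{i=1}^{2n}\int_{\ve_{2n,i}^{(0)}}^{\z} \big( G_{v_{2n,i}^{(0)}} + \overline{G_{v_{2n,i}^{(0)}}} \big) \right\}
\]
(interpreted as \( \exp\{\sum_i g(\z, v_{2n,i}^{(0)})\} \) up to the normalization at \( \e_0 \)), which is single-valued and positive on \( \RS \) because the periods of \( G_\ve \) are purely imaginary; on \( D^{(1)} \) it equals, up to bounded factors, \( \exp\{\sum_i g(z^{(1)},v_{2n,i}^{(0)})\} \), and on \( \bd \) it is bounded above and below by Definition~\ref{def:symmetry}(ii). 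Next I would pin down the analytic part: the ratio \( \Psi_n/(\text{product of the holomorphic }\exp\text{-parts of }\int G) \) is a sectionally \emph{holomorphic}-up-to-the-divisor object whose monodromy around the \( \ualpha \)- and \( \ubeta \)-cycles is governed precisely by the real vectors \( \vec\omega_n, \vec\tau_n \) of \eqref{constants}; writing it as a ratio of Riemann theta functions with the argument shifted by \( \am(\mathcal D_n) - \am(g\infty^{(1)}) - \vec c_\rho - \vec\omega_n - \boldsymbol B\vec\tau_n \) (which vanishes mod periods by \eqref{main-jip}) and the Cauchy-type factor \( w^2(z)/v_{2n}(z) \) accounting for the jump \eqref{PsiBoundary} and the poles at \( \infty^{(0)},\infty^{(1)} \), one gets \( \Psi_n \) as (bounded analytic)\(\times \mathcal{E}_n \times (w^2/v_{2n})^{1/2}\)-type expression. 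Squaring absorbs the square root and yields the stated factor \( |w^2(z)/v_{2n}(z)| \).

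The key point that makes the constant \( C \) uniform over \( n\in\N_* \) is Condition~\ref{CondE}: the theta-quotient in the numerator and denominator depends continuously on \( \mathcal D_n \in \RS^g/\Sigma_g \), and the denominator theta function \( \theta(\am(\z) - \am(\mathcal D_n) + \vec K) \) (\( \vec K \) the Riemann constant) vanishes exactly when \( \z \) lies in the support of \( \mathcal D_n \); since \( \{\mathcal D_n\}_{n\in\N_*} \) has compact closure avoiding involution-symmetric pairs and \( \infty^{(0)} \), on any fixed closed \( K\subset D \) the lifted set \( K^{(1)} \) (and \( K^{(0)} \)) stays a positive distance from \( \supp(\mathcal D_n) \), so the denominator is bounded below uniformly. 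For \( \Upsilon_n \), Proposition~\ref{prop:secondary} realizes it as a similar theta quotient with divisor \( \mathcal D_n + \infty^{(1)} - \infty^{(0)} \) in the zero set; the same compactness gives \( |\Upsilon_n(z^{(1)})| \leq C \) on \( K \), and the bound for \( \Psi_n\Upsilon_n \) follows by multiplying.

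I would organize the write-up as: (1) define \( \mathcal{E}_n \), verify single-valuedness and the bounds on \( \bd \) and in \( D^{(1)} \) via \eqref{g-symmetry} and Definition~\ref{def:symmetry}(ii); (2) write the explicit theta representation of \( \Psi_n \) and check it has divisor \eqref{PsiZeros} and jump \eqref{PsiBoundary}, invoking the uniqueness clause of Proposition~\ref{prop:nuttall-szego} to identify it with \( \Psi_n \); (3) choose the normalization constant so the holomorphic prefactor has modulus comparable to \( 1 \); (4) use Condition~\ref{CondE} plus continuity of the theta quotient on \( \RS^g/\Sigma_g \) to bound the analytic part uniformly on \( K \); (5) repeat for \( \Upsilon_n \). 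The main obstacle I anticipate is step (2)–(4): keeping careful track of the half-integer shifts and the square-root branch of \( (w^2/v_{2n})^{1/2} \) so that the analytic prefactor is genuinely single-valued on \( \RS\setminus\bd \) with no residual monodromy, and ensuring the lower bound on the denominator theta function is uniform — this is exactly where Condition~\ref{CondE} (no limit divisor meeting \( \infty^{(0)} \) or containing an involution-symmetric pair) is used, the involution-symmetric pairs being dangerous because on a hyperelliptic surface they are the locus where theta degenerates along the whole fiber.
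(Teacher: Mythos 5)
Your plan follows the same route as the paper's proof: the paper builds \(\Psi_n\) explicitly as the product \(C_n S_n S_\rho S_{\vec\tau_n} S_{\vec m_n}\Theta_n\) of an exponential-of-abelian-integral factor \(S_n\), Szeg\H{o}-type correction factors, and a theta quotient \(\Theta_n\); the normalization \(C_n=|v_{2n}(e_0)|^{1/2}\) makes \(|C_nS_n(z^{(1)})|=\exp\{\frac12\sum g(z^{(1)},v_{2n,i}^{(0)})\}/|v_{2n}(z)|^{1/2}\), the Szeg\H{o} factors are bounded because the real vectors \(\vec\omega_n+\vec j_n\), \(\vec\tau_n+\vec m_n\) are bounded, and \(|\Theta_n(z^{(1)})/w(z)|\) is bounded by a normal family argument; squaring then produces \eqref{maxPsi}, and the same decomposition with \(\widetilde{\mathcal D}_n\) in place of \(\mathcal D_n\) handles \(\Psi_n\Upsilon_n\). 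So steps (1)--(5) of your outline are indeed what the paper does, and the ``squaring absorbs the square root'' observation is the right reconciliation between the pointwise estimate on \(|C_nS_n|\) and the stated bound.

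However, your explanation of how Condition~\ref{CondE} enters is wrong in a way worth flagging. You assert that the theta function \(\theta(\am(\z)-\am(\mathcal D_n)\pm\vec K)\) vanishing on \(\supp(\mathcal D_n)\) sits in the \emph{denominator} of \(\Theta_n\) and that CondE keeps the lifted set \(K^{(1)}\) a positive distance from \(\supp(\mathcal D_n)\), so that the denominator stays bounded below. Both claims are off. In \eqref{thetan} the theta whose zeros are at \(\mathcal D_n\) is the \emph{numerator}; the denominator, \(\theta(\am(\z)-\am(g\infty^{(1)})-\vec K)\), is \(n\)-independent and vanishes only at \(\infty^{(1)}\). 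Zeros of the numerator never endanger an \emph{upper} bound, so there is no need to keep \(\supp(\mathcal D_n)\) away from \(K\) --- and Condition~\ref{CondE} does not imply any such separation; the divisors \(\mathcal D_n\) are perfectly free to accumulate at points of \(K^{(1)}\). What Condition~\ref{CondE} actually buys is (a) non-degeneracy: a limit divisor with an involution-symmetric pair would make the numerator theta vanish identically, ruining the whole construction, and the requirement that no limit divisor contain \(\infty^{(0)}\) is what makes \(\Upsilon_n\) well-defined (Proposition~\ref{prop:secondary}) and keeps the normalization constants \(\gamma_n,\gamma_n^*\) under control in \eqref{gammas-asymp}; and (b) compactness: the closure \(\mathfrak D\) of \(\{\mathcal D_n\}_{n\in\N_*}\) is a compact set on which \(\mathcal D\mapsto\Theta_{\mathcal D}/w\) is a continuous family of holomorphic functions, hence uniformly bounded on compacta of \(D^{(1)}\). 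This is where the uniform constant \(C(K,\N_*)\) comes from, not from separating \(K\) and \(\supp(\mathcal D_n)\). If you carry out your steps (2)--(4) you would be forced into this correction, but as written the justification of the uniform bound does not go through.
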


Recall that according to Definition~\ref{def:symmetry}(ii) the exponential in the right-hand side of \eqref{maxPsi} vanishes at every zero of \( v_{2n}(z) \) with corresponding multiplicity and their sequence approaches zero locally uniformly in \( D \).

Concerning the unique solvability of \eqref{main-jip} and the existence of a sequence \( \N_* \) as in Condition~\ref{CondE} nothing is known beyond the special case of the classical diagonal Pad\'e approximants, i.e., when \( \mathcal V=\mathcal V_\infty \) \cite{ApY15,Y15}.

\begin{theorem}[{\bf Aptekarev-Y.}]
Assume that \( \mathcal V=\mathcal V_\infty \). Let \( \mathcal D_n \) be either the unique solution of \eqref{main-jip} or the solution where all involution-symmetric pairs are replaced by \( \infty^{(0)} + \infty^{(1)} \). Then
\[
\mathcal{D}_n=\sum_{i=1}^{g-l}\z_i+k\infty^{(0)}+(l-k)\infty^{(1)} \quad \Leftrightarrow \quad \mathcal{D}_{n+j} = \mathcal{D}_n +j\big(\infty^{(0)}-\infty^{(1)}\big)
\]
for \( j\in\{-k,\ldots,l-k\} \), where \( l>0 \), \( k\in\{0,\ldots,l\}$, and \( |z_i|<\infty \). In particular, the subsequence of indices for which \eqref{main-jip} is uniquely solvable, say \( \N^\prime \), cannot have gaps larger than \( g-1 \). Moreover, let \( \N^{\prime\prime}\subset\N^\prime \) be a subsequence such that
\[
\mathcal D_n \to \mathcal D + \sum_{i=1}^k\left(z_i^{(0)}+z_i^{(1)}\right) + l_0\infty^{(0)} + l_1\infty^{(1)} \qasq \N^{\prime\prime} \ni n\to\infty,
\]
where an effective divisor \( \mathcal D \) has degree \( g-2k-l_0-l_1 \) and contains neither involution-symmetric pairs, nor \( \infty^{(0)} \), nor \( \infty^{(1)} \). Then there exists a subsequence \( \N^{\prime\prime\prime} \) such that
\[
\mathcal D_{n-l_1-k} \to \mathcal D + (l_0 + l_1+2k )\infty^{(1)} \qasq \N^{\prime\prime\prime} \ni n\to\infty.
\]
In particular, one can take \( \N_* = \N^{\prime\prime\prime} \).
\end{theorem}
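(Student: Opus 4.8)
The plan is to make \eqref{main-jip} completely explicit in the case $\mathcal V=\mathcal V_\infty$ and then read the divisors off the resulting orbit in the Jacobian. Since here every $v_{2n,i}^{(0)}$ equals $\infty^{(0)}$ and $v_{2n}\equiv1$, all the differentials $G_{v_{2n,i}^{(0)}}$ coincide with $G_{\infty^{(0)}}$, so $\vec\omega_n=n\vec\omega_1$ and $\vec\tau_n=n\vec\tau_1$. Writing $G_{\infty^{(0)}}$ as the third-kind differential with vanishing $\ualpha$-periods and residues $\mp1$ at $\infty^{(0)},\infty^{(1)}$ plus the holomorphic differential $2\pi\ic\sum_k(\vec\tau_1)_k\Omega_k$, and applying Riemann's bilinear relations to the first summand, one obtains $\vec\omega_1+\boldsymbol B\vec\tau_1\equiv\am\big(\infty^{(0)}-\infty^{(1)}\big)$ modulo periods. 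Hence \eqref{main-jip} reads $\am(\mathcal D_n)\equiv\vec c_\rho+\am\big((g-n)\infty^{(1)}+n\infty^{(0)}\big)$, and in particular
\[
\am(\mathcal D_{n+1})\ \equiv\ \am(\mathcal D_n)+\am\big(\infty^{(0)}-\infty^{(1)}\big)\qquad(\mdp\vec\Omega\,);
\]
this single recurrence, together with Abel's theorem, is what the whole proof runs on. (It is consistent with \eqref{PsiZeros}, where $\am$ of the divisor of $\Psi_n$ must be $\equiv\vec c_\rho$.)

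I would then bring in two rigidity facts about the hyperelliptic $\RS$: a non-constant rational function on $\RS$ with at most $g$ poles is the lift of a rational function on $\overline\C$, so any principal divisor whose positive part has degree at most $g$ equals $\pi^{-1}$ of a divisor on $\overline\C$ and is therefore $\cdot^*$-symmetric; and consequently the fibres of $\pi$ all have trivial Abel image (being linearly equivalent to $2\e_0$). From the first it follows that two solutions of \eqref{main-jip} differ only by swapping $\pi$-fibres, so the canonical representative $\mathcal D_n$ (all involution-symmetric pairs replaced by $\infty^{(0)}+\infty^{(1)}$) is well defined, is the unique solution of \eqref{main-jip} in that normal form, and agrees with the genuine unique solution exactly when $\mathcal D_n$ carries no involution-symmetric pair. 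Granting this, the forward implication of the equivalence is immediate: if $\mathcal D_n=\sum_{i=1}^{g-l}\z_i+k\infty^{(0)}+(l-k)\infty^{(1)}$ with $l>0$, then for $j\in\{-k,\dots,l-k\}$ the divisor $\mathcal E_j:=\sum_{i=1}^{g-l}\z_i+(k+j)\infty^{(0)}+(l-k-j)\infty^{(1)}$ is effective of degree $g$, is already in canonical normal form, and has $\am(\mathcal E_j)\equiv\am(\mathcal D_n)+j\,\am\big(\infty^{(0)}-\infty^{(1)}\big)$, hence solves \eqref{main-jip} at index $n+j$; uniqueness of the normal form gives $\mathcal D_{n+j}=\mathcal E_j$. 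The converse follows by reading off the multiplicities at $\infty^{(0)},\infty^{(1)}$ from the effectivity of $\mathcal D_{n-k}$ and $\mathcal D_{n+l-k}$, using maximality of the run and canonicity. The gap bound is then a corollary: an index $n\notin\N'$ has $\mathcal D_n$ special, hence $\mathcal D_n\ge\infty^{(0)}+\infty^{(1)}$ and $0<k<l\le g$; the run above has $\mathcal D_{n-k},\mathcal D_{n+l-k}$ free of involution-symmetric pairs (so $n-k,n+l-k\in\N'$) while the $l-1$ intermediate indices are all special, whence consecutive elements of $\N'$ lie at distance at most $l\le g$.

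For the remaining, asymptotic, statement I would argue by compactness of $\RS^g/\Sigma_g$. Along $\N''$, $\mathcal D_n\to\mathcal D_\infty:=\mathcal D+\sum_{i=1}^k(z_i^{(0)}+z_i^{(1)})+l_0\infty^{(0)}+l_1\infty^{(1)}$, so by continuity of $\am$ and the recurrence, $\am(\mathcal D_{n-l_1-k})\to\am(\mathcal D_\infty)-(l_1+k)\,\am\big(\infty^{(0)}-\infty^{(1)}\big)$; using that the fibres of $\pi$ have trivial Abel image to discard the finite symmetric pairs, a direct computation identifies this limit with $\am\big(\mathcal D+(l_0+l_1+2k)\infty^{(1)}\big)$, and the divisor on the right has degree $g$, carries no involution-symmetric pair and no $\infty^{(0)}$, hence is non-special. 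Passing to a subsequence $\N'''\subseteq\N''$ along which $\mathcal D_{n-l_1-k}$ itself converges in $\RS^g/\Sigma_g$, continuity of $\am$ and uniqueness of the solution of \eqref{main-jip} at a non-special value force the limit to be $\mathcal D+(l_0+l_1+2k)\infty^{(1)}$. Finally, since the locus of special divisors and the locus of divisors dominating $\infty^{(0)}$ are both closed in $\RS^g/\Sigma_g$, a tail of $\{\mathcal D_{n-l_1-k}\}_{n\in\N'''}$ avoids both; that tail of shifted indices (which one may again call $\N'''$) therefore verifies Condition~\ref{CondE}.

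The step I expect to be the real obstacle is the hyperelliptic rigidity used in the middle paragraph. Abel's theorem only pins down $\am(\mathcal D_n)$ in the Jacobian, so a priori $\mathcal D_n$ is determined only up to the fibre of the Abel--Jacobi map $\RS^g/\Sigma_g\to\mathrm{Jac}(\RS)$ over that point; the whole statement --- well-definedness and uniqueness of the canonical normal form, and the identification $\mathcal D_{n+j}=\mathcal E_j$ --- rests on controlling these fibres over the singular locus of the theta divisor, which for hyperelliptic $\RS$ is exactly the assertion that rational functions of degree at most $g$ are lifts from $\overline\C$. Once that input and the bilinear-relations identity are in hand, the construction of the $\mathcal E_j$, the counting in the gap bound, and the compactness argument in the last paragraph are all routine.
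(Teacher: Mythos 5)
The paper does not actually prove this statement; it is imported verbatim from the references \cite{ApY15,Y15}, so there is no in-paper proof to compare against. That said, your strategy is the natural one and aligns with the paper's internal scaffolding: the reduction of \eqref{main-jip} to the orbit $\am(\mathcal D_n)\equiv\am\big((g-n)\infty^{(1)}+n\infty^{(0)}\big)+\vec c_\rho$ via $\vec\omega_1+\boldsymbol B\vec\tau_1\equiv\am(\infty^{(0)}-\infty^{(1)})$ reproduces an identity the paper itself derives in Section~5.4, and the hyperelliptic rigidity you lean on (principal divisors with positive part of degree at most $g$ are lifts from $\overline\C$, hence involution-symmetric) is exactly the observation the paper makes in the paragraph following \eqref{main-jip}. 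The forward direction via the divisors $\mathcal E_j$, the converse from effectivity, and the gap bound from $0<k<l\leq g$ are all correct in outline.

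The gap is in the asymptotic step, where you assert that ``a direct computation identifies'' $\am(\mathcal D_\infty)-(l_1+k)\am(\infty^{(0)}-\infty^{(1)})$ with $\am\big(\mathcal D+(l_0+l_1+2k)\infty^{(1)}\big)$ without exhibiting it; in fact it does not close. Writing $a_i:=\am(\infty^{(i)})$ and using $a_0+a_1\equiv0$ together with the triviality of the $\pi$-fibre images, one has $\am(\mathcal D_\infty)\equiv\am(\mathcal D)+(l_1-l_0)a_1$ and $\am(\infty^{(0)}-\infty^{(1)})\equiv -2a_1$, so with shift $m$ the limit is $\am(\mathcal D)+(l_1-l_0+2m)a_1$, which equals the target $\am(\mathcal D)+(l_0+l_1+2k)a_1$ only when $m=l_0+k$. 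The shift $m=l_1+k$ in the quoted statement therefore leaves a discrepancy of $2(l_1-l_0)\am(\infty^{(1)})$, which is nonzero in the Jacobian when $l_0\neq l_1$ (already visible in the toy case $g=1$, $k=0$, $l_0=1$, $l_1=0$, where $\mathcal D_n\to\infty^{(0)}$ but the claim would give $\mathcal D_n\to\infty^{(1)}$). Most likely the statement as transcribed in this paper carries a typo ($l_1$ for $l_0$ in the index shift), but your proof cannot simply declare the arithmetic works; you should have carried the computation through and either corrected the shift to $m=l_0+k$ or flagged the inconsistency. The remaining points in that paragraph --- passing to a convergent subsequence, noting that the non-special limit with no $\infty^{(0)}$ is the unique solution at its Abel value, and using closedness of the bad loci in $\RS^g/\Sigma_g$ to get a tail verifying Condition~\ref{CondE} --- are fine once the shift is corrected.
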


We prove Propositions~\ref{prop:nuttall-szego}--\ref{prop:estimates} in Section~\ref{sec:NS}.

\subsection{Asymptotics of the Approximants}
\label{ssec:AA}

Given \( \Delta \) as in Definition~\ref{def:symmetry}(i) and a function \( \rho \) on \( \Delta \), set
\begin{equation}
\label{f-rho}
f_\rho(z) := \frac1{2\pi\mathrm{i}}\int_\Delta\frac{\rho(t)}{t-z}\frac{\dd t}{w_+(t)}, \quad z\in D.
\end{equation}
We shall be interested in continuous and non-vanishing functions \( \rho \) such that a continuous determination of the logarithm \( \log\rho \) belongs to the fractional Sobolev space \( W^{1-1/p}_p \), \( p\in(1,\infty) \), that is,
\begin{equation}
\label{sobolev}
\iint_{\Delta\times\Delta}\left|\frac{\log\rho(x)-\log\rho(y)}{x-y}\right|^p|\dd x||\dd y|<\infty.
\end{equation}
When \( p>2 \), it follows from Sobolev imbedding theorem that every function in \( W^{1-1/p}_p \) is in fact Lipschitz continuous with index \( 1-\frac2p \). For convenience, we also put
\begin{equation}
\label{Psis}
\left\{
\begin{array}{l}
\Psi_n(z) := \Psi_n\big(z^{(0)}\big), \medskip \\
\Psi_n^*(z) := \Psi_n\big(z^{(1)}\big),
\end{array}
\right. \qandq
\left\{
\begin{array}{l}
\Upsilon_n(z) := \Upsilon_n\big(z^{(0)}\big), \medskip \\
\Upsilon_n^*(z) := \Upsilon_n\big(z^{(1)}\big),
\end{array}
\right.
\end{equation}
\( z\in D \), where \( \Psi_n, \Upsilon_n \) are the functions from Propositions~\ref{prop:nuttall-szego}--\ref{prop:estimates}. Then the following theorem holds.

\begin{theorem}
\label{thm:main}
Given \( \RS \) and \( \mathcal V \) as in \eqref{R} and \eqref{scheme}, assume that \( \Delta \) is symmetric with respect to \( (\RS,\mathcal V) \) in the sense of Definition~\ref{def:symmetry}. Let \( \rho \) be a non-vanishing function on \( \Delta \) with \( \log\rho\in W_p^{1-1/p} \) for some \( p>4 \) and let \( f_\rho \) be as in \eqref{f-rho}. Further, let
\[
p_n/q_n:=[n/n;V_{2n}]_{f_\rho} \qandq R_n:=R_{n,n}
\]
be the diagonal multipoint Pad\'e approximant to \( f_\rho \) associated with \( V_{2n} \) and its linearized error function, see Definition~\ref{def:pade}. Assuming that the interpolation scheme \( \mathcal V \) is such that Condition~\ref{CondE} is fulfilled, it holds for all $n\in\N_*$ large enough that
\begin{equation}
\label{SA}
\left\{
\begin{array}{rll}
q_n &=& \displaystyle \gamma_n\Psi_n\left( 1+\varepsilon_{n1} + \varepsilon_{n2} \Upsilon_n\right), \bigskip \\
wR_n &=& \displaystyle \gamma_n\Psi_n^*\left(1+\varepsilon_{n1} +\varepsilon_{n2}\Upsilon_n^*\right),
\end{array}
\right.
\end{equation}
where \( \varepsilon_{nj}(\infty)=0 \), \( \varepsilon_{nj} =o(1) \) with respect to \( n\) and locally uniformly in \( D \), and \( \gamma_n \) is a normalizing constant such that \( \gamma_n\Psi_n(z) =z^n(1+o(1) ) \) as \( z\to\infty \). 
\end{theorem}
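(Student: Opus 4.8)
The plan is to reduce the asymptotic analysis of $q_n$ and $R_n$ to a matrix Riemann--Hilbert problem, lift it to the Riemann surface $\RS$, and then perform a $\bar\partial$-extension of the steepest-descent method in the spirit of Miller--McLaughlin and the later $\bar\partial$ developments, using the Nuttall--Szeg\H{o} functions $\Psi_n$ and the secondary functions $\Upsilon_n$ from Propositions~\ref{prop:nuttall-szego}--\ref{prop:estimates} as the global parametrix. First I would set up the $2\times 2$ \rhy{} problem: $\boldsymbol Y$ is analytic off $\Delta$, has jump $\left(\begin{smallmatrix} 1 & \rho/(v_{2n}w_+)\\ 0 & 1\end{smallmatrix}\right)$ on $\Delta$, and behaves like $\left(\begin{smallmatrix} z^n & *\\ * & z^{-n}\end{smallmatrix}\right)$ at infinity; its first row recovers $q_n$ and $wR_n$ via the standard Fokas--Its--Kitaev correspondence, where the non-Hermitian orthogonality of $q_n$ with respect to $\rho/(v_{2n}w_+)$ on $\Delta$ (which follows from Definition~\ref{def:pade} and a contour-deformation argument using that $\Delta$ does not separate the plane) guarantees solvability. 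The normalization $\gamma_n\Psi_n(z)=z^n(1+o(1))$ and the divisor \eqref{PsiZeros} are exactly engineered so that the lift of the global parametrix to $\RS$ is built from $\Psi_n$, with the extra degree of freedom parametrized by $\Upsilon_n$ accounting for the genus-$g$ obstruction (the divisor $\mathcal D_n$ solving the Jacobi inversion problem \eqref{main-jip}).

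The key steps, in order, are: (1) the transformation $\boldsymbol Y\mapsto\boldsymbol T$ normalizing the behavior at infinity using $g$-functions — here one uses $g(\cdot,\ve)$ from Definition~\ref{def:g-fun} and the symmetry property in Definition~\ref{def:symmetry}(ii), so that the conjugated jump on $\Delta$ oscillates and the "lens" opening is controlled by the uniform boundedness of $\sum_i g(\cdot,v_{2n,i}^{(0)})$ on $\bd$; (2) opening lenses around $\Delta$, which because $\log\rho$ is only Sobolev-$W^{1-1/p}_p$ rather than analytic forces a non-analytic ($\bar\partial$) extension of the factorization across the lens boundaries — the hypothesis $p>4$ is what makes the resulting $\bar\partial$-mass $p$-integrable with enough room to close the estimates (the exponent $1-2/p>1/2$ after Sobolev embedding); (3) constructing the global parametrix $\boldsymbol N$ on $\RS$ from $\Psi_n$ and $\Upsilon_n$, invoking Proposition~\ref{prop:estimates} under Condition~\ref{CondE} to bound it uniformly on compacts of $D$; (4) local parametrices at the endpoints $E$ built from Bessel/Airy functions (standard, since the arcs $\Delta_j$ are analytic and $\rho$ is non-vanishing and Hölder by the $p>4$ embedding); (5) the final small-norm $\bar\partial$-problem for $\boldsymbol R=\boldsymbol S\boldsymbol N^{-1}$, whose solution is $I+o(1)$ in the appropriate $L^\infty\cap$(Beurling-transform) sense, giving \eqref{SA} with the two correction terms $\varepsilon_{n1}$ (from the $\bar\partial$-region and jump mismatches away from endpoints) and $\varepsilon_{n2}$ (the $\Upsilon_n$-channel, i.e.\ the $(2,1)$ or off-diagonal component picked up from the second sheet), both $o(1)$ locally uniformly and vanishing at $\infty$ by construction of the normalization.

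The main obstacle I expect is step (5) combined with (3): because the Nuttall--Szeg\H{o} parametrix lives on a genus-$g$ surface, $\boldsymbol N$ need not be uniformly bounded along the full sequence — divisors $\mathcal D_n$ can approach the diagonal of $\RS^g$ (an involution-symmetric pair) or hit $\infty^{(0)}$, blowing up $\Psi_n$ or $\Upsilon_n$. This is precisely why Condition~\ref{CondE} is imposed and why the analysis is restricted to $n\in\N_*$; the delicate point is to show that the estimate \eqref{maxPsi} of Proposition~\ref{prop:estimates} survives multiplication by $\boldsymbol N^{-1}$ in the error problem, i.e.\ that the ratio $\boldsymbol N$ restricted to a fixed compact $K\subset D$ stays bounded \emph{and} bounded away from degeneracy, uniformly in $n\in\N_*$, so that the $o(1)$ bound on $\boldsymbol R-I$ transfers to genuine $o(1)$ bounds on $\varepsilon_{nj}$. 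A secondary technical difficulty is matching the $\bar\partial$-extension's error (which is $O(n^{-(1-2/p)})$-type in the relevant norms rather than exponentially small) against the exponentially small contributions from the jump tails and the local parametrices, ensuring the combined error is still $o(1)$; this is where the threshold $p>4$ (as opposed to $p>2$, which suffices for weak asymptotics) becomes essential, since one needs the $\bar\partial$-contribution to beat the algebraically large factor $|w^2/v_{2n}|$ appearing on compacts.
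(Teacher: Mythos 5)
Your skeleton --- a $2\times2$ Riemann--Hilbert problem, a global parametrix built from $\Psi_n$ and $\Upsilon_n$, a $\bar\partial$-extension to handle the non-analytic $\rho$, a small-norm closure, and Condition~\ref{CondE} keeping the parametrix non-degenerate along $\N_*$ --- is the right one, and your diagnosis of what Condition~\ref{CondE} protects against is correct. However, two of the steps you insert are not in the paper's argument, and in this geometry they would create genuine obstacles rather than resolve them.

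The paper performs no separate $g$-function transformation $\boldsymbol Y\mapsto\boldsymbol T$: the opening-of-lenses step \eqref{eq:x} conjugates directly by the factor $v_{2n}w/\rho$, and the jump on the auxiliary contour $\Gamma$ is small by \eqref{smalljump} because \eqref{maxPsi} together with Definition~\ref{def:symmetry}(ii) forces $\gamma_n\gamma_n^*(v_{2n}/\rho w)(\Psi_n^*)^2\to0$ uniformly on $\Gamma$. The decay a $g$-function normally furnishes is already encoded in the pole/zero structure of $\Psi_n$ at $\infty^{(0)},\infty^{(1)}$ and in the sums $\sum_i g(\cdot,v_{2n,i}^{(0)})$, so a second conjugation is redundant. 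More importantly, the paper uses \emph{no} local parametrices. The contour $\Gamma$ is a fixed union of Jordan curves at positive distance from $E$, so the small-norm estimate on $\Gamma$ is uniform, and the global parametrix $\boldsymbol N=\boldsymbol{CM}$ already realizes the exact endpoint behavior required by \rhy(c) via the explicit $1/w$ factor in $\boldsymbol M$ --- there is nothing to match locally. This is not optional here: $\Delta$ is a branched contour, endpoints in $E$ can have odd bifurcation index greater than one, and arcs can meet at non-ramification junction points (see Figure~\ref{fig:2}); standard Bessel/Airy parametrices simply do not exist at such points, so your step (4) would stall.

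Two smaller imprecisions. The threshold $p>4$ is not used through the Sobolev exponent $1-2/p>1/2$; it enters in Lemma~\ref{lem:pbd}, where bounding $\|\mathcal K_{\boldsymbol W}\|$ via H\"older's inequality (with $\bar\partial G\in L^p$ and the Cauchy kernel locally in $L^{2-\varepsilon}$) produces $\|\Phi_n^*\|_q$ for some $q\in(4p/(p-4),\infty)$, and that interval is non-empty exactly when $p>4$. Finally, the assertion that orthogonality of $q_n$ ``guarantees solvability'' of \rhy\ is a gap: solvability of \rhy\ is equivalent to \eqref{assumption}, i.e.\ $\deg q_n=n$ and $\widetilde R_n\sim z^{-n}$, which is not automatic. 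The paper obtains it a posteriori: once $\boldsymbol X=\boldsymbol{CDZM}$ is constructed and shown to solve \rhx\ for $n\in\N_*$ large, Lemma~\ref{lem:rhx} and the converse direction of Lemma~\ref{lem:rhy} force \eqref{assumption} to hold.
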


In the case of classical Pad\'e approximants Theorem~\ref{thm:main} should be compared to results by Szeg\H{o} \cite{Szego} (\( \Delta=[-1,1] \) and \( \rho(t)\dd t/w_+(t) \) is replaced by any positive measure satisfying Szeg\H{o}'s condition); Nuttall \cite{Nut90} (\( \Delta=[-1,1] \) and \( \rho \) is H\"older continuous); Suetin \cite{Suet00} (\( \Delta \) is a union of disjoint analytic arcs and \( \rho \) is H\"older continuous); Baratchart and the author \cite{BY13} (\( \Delta \) consists of three arcs meeting at a common point and \( \rho \) is Dini continuous); Aptekarev and the author \cite{ApY15} (\( \Delta \) is such that no endpoint has bifurcation index more than \( 3 \), \( \rho \) is holomorphic across each \( \Delta_j \) and can have power-type singularities at endpoints with bifurcation index \( 1\)); and the author \cite{Y15} (\( \Delta \) is any and \( \rho \) is holomorphic around each connected component of \( \Delta \)). Of course, in all the cases \( \Delta \) is a symmetric contour and \( \rho(t) \) is non-vanishing (except for Szeg\H{o}'s result).

In the case of multipoint Pad\'e approximants Theorem~\ref{thm:main} is an addition to the results by de la Calle Ysern and L\'opez Lagomasino \cite{CYLL98} (Szeg\H{o}'s set up with interpolation schemes as in the present study plus additional conjugate-symmetry); Baratchart and the author \cite{BY09c,BY10} (\( \Delta \) is a single arc and \( \rho \) is Dini-continuous in \cite{BY09c} and with power-type singularities at the endpoints while satisfying Sobolev-type condition that depends on the magnitude of the singularities on \( \Delta^\circ \) in \cite{BY10}, the class of interpolation schemes is more restricted in \cite{BY10} while in \cite{BY09c} they are exactly the same as in the present paper); Aptekarev \cite{Ap02} (it is a more general result on varying non-Hermitian orthogonality that can be applied to multipoint Pad\'e approximants to yield the results of \cite{BY09c,BY10}, which came later, for holomorphic \( \rho \)).

We prove Theorem~\ref{thm:main} in Section~\ref{sec:asymp}.

\subsection{Numerical Simulations}

The following computations were performed in MAPLE 18 software using 64 digit precision.

\begin{figure}[ht!]
\centering
\subfigure[]{\includegraphics[scale=.25]{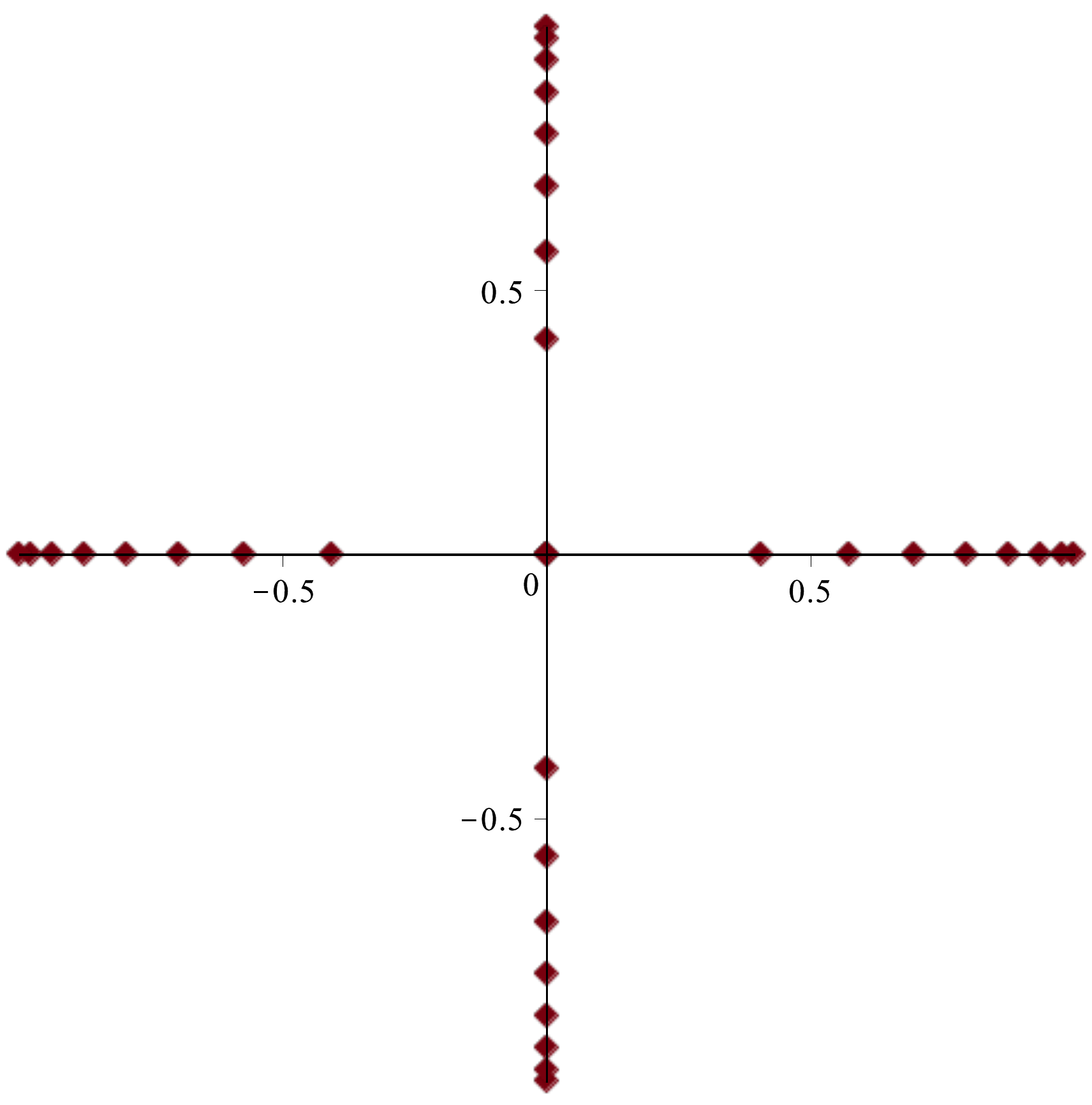}}~
\subfigure[]{\includegraphics[scale=.25]{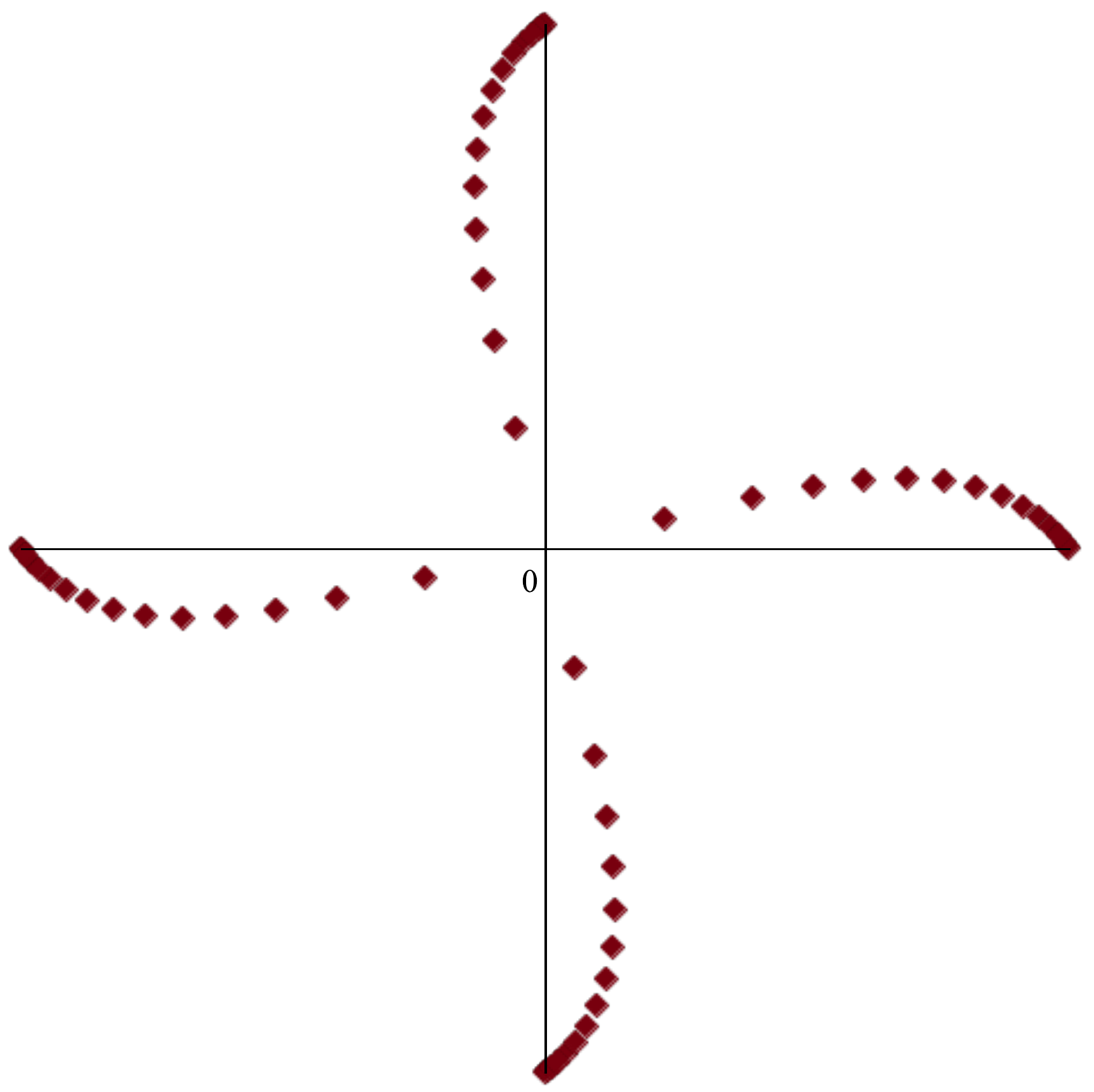}}~
\subfigure[]{\includegraphics[scale=.25]{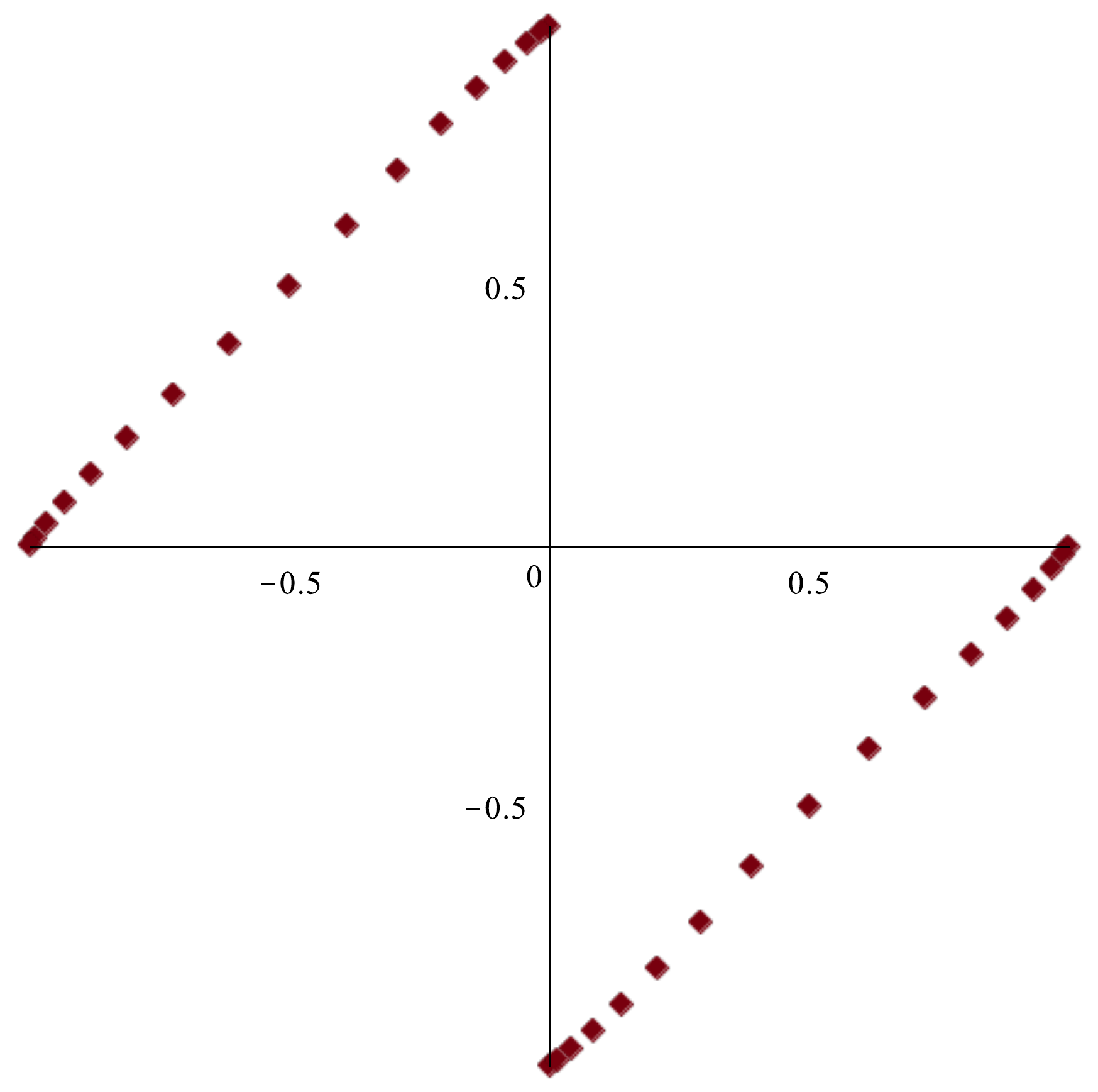}}
\caption{Poles of multipoint Pad\'e approximants to \( (z^4-1)^{-1/2} \) associated with interpolation schemes \( \mathcal V_* \), \( \mathcal V_*^\prime \), and \( \mathcal V_*^{\prime\prime} \).}
\label{fig:f2}
\end{figure}

Let \( f(z) = (z^4-1)^{-1/2} \). The symmetry of \( f(z) \) readily implies that the Stahl's contour \( \Delta_f=\Delta_\infty \) from Theorem~\ref{thm:stahl} is equal to \( [-1,1]\cup[-\ic,\ic] \). The corresponding surface from \eqref{R} is given by \( \RS_* := \big\{(z,w):~w^2=z^4-1\big\} \). Similar symmetry considerations also yield that for the interpolation scheme \( \mathcal V_* \) such that \( V_{4n+2} = V_{4n}\cup\{\infty,\infty\} \) and
\[
V_{4n} = \left\{\underbrace{1+\ic,\ldots,1+\ic}_{n~\text{times}},\underbrace{-1+\ic,\ldots,-1+\ic}_{n~\text{times}},\underbrace{-1-\ic,\ldots,-1-\ic}_{n~\text{times}},\underbrace{1-\ic,\ldots,1-\ic}_{n~\text{times}}\right\},
\]
\( [-1,1]\cup[-\ic,\ic] \) remains symmetric with respect to \( (\RS_*,\mathcal V_*) \). The poles of \( [34/34;V_{68}]_f \) are shown on Figure~\ref{fig:f2}(a). If we take now \( V_{4n+2}^\prime = V_{4n}^\prime\cup\{\infty,\infty\} \) and
\[
V_{4n}^\prime = \left\{\underbrace{\frac14+\ic,\ldots,\frac14+\ic}_{n~\text{times}},\underbrace{-1+\frac\ic4,\ldots,-1+\frac\ic4}_{n~\text{times}},\underbrace{-\frac14-\ic,\ldots,-\frac14-\ic}_{n~\text{times}},\underbrace{1-\frac\ic4,\ldots,1-\frac\ic4}_{n~\text{times}}\right\},
\]
then the poles of \( [60/60;V_{120}^\prime]_f \) are shown on Figure~\ref{fig:f2}(b). The Riemann surfaces needed to analyze these approximants is still \( \RS_* \) and the contour symmetric with respect to \( (\RS,\mathcal V_*^\prime) \) can be obtained via the process described in Theorem~\ref{thm:scontours}. If we take
\[
V_{2n}^{\prime\prime}=\left\{\underbrace{1+\ic,\ldots,1+\ic}_{n~\text{times}},\underbrace{-1-\ic,\ldots,-1-\ic}_{n~\text{times}}\right\},
\]
then the poles of \( [34/34;V_{68}^{\prime\prime}]_f \) are shown on Figure~\ref{fig:f2}(c). As in the previous case, \( \RS_* \) is still the appropriate Riemann surface, but the contour symmetric with respect to \( (\RS_*,\mathcal V_*^{\prime\prime}) \) cannot be obtained via the procedure of Theorem~\ref{thm:scontours}. It is most likely that a version of Theorem~\ref{thm:scontours} where the map \( \Xi \) is defined on the surface itself, could prove the existence of such a contour.

\begin{figure}[ht!]
\centering
\subfigure[]{\includegraphics[scale=.25]{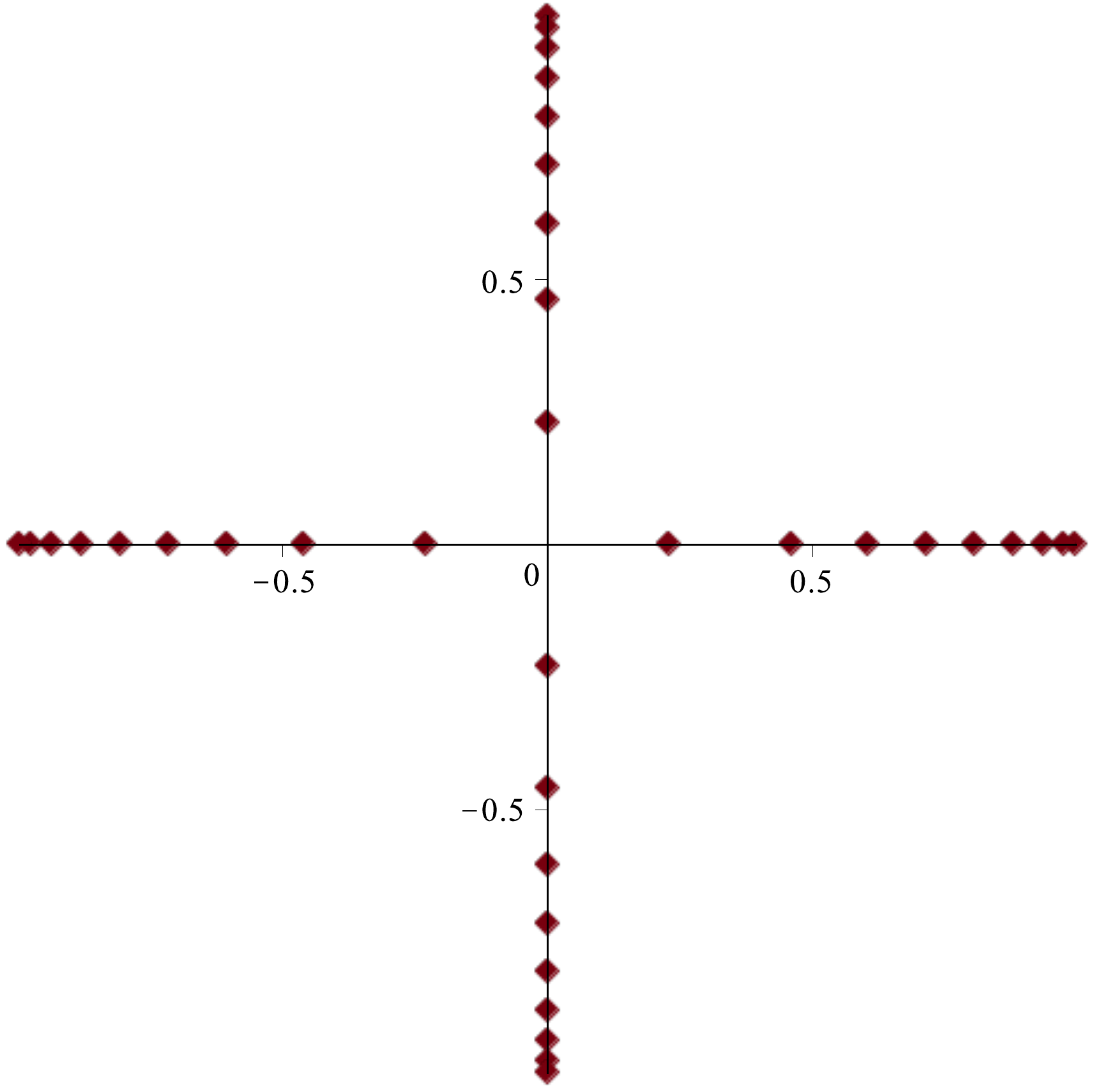}}~
\subfigure[]{\includegraphics[scale=.25]{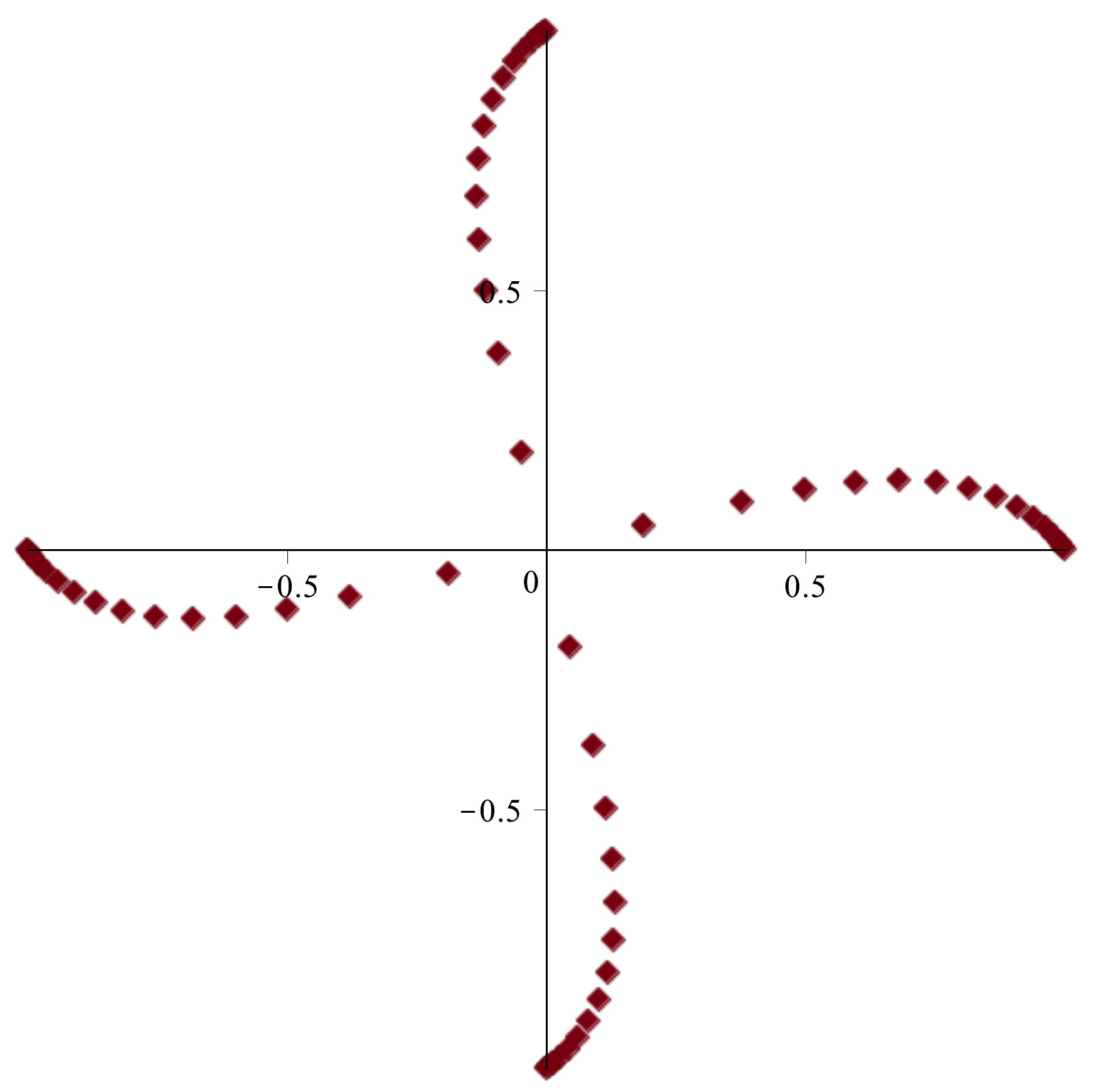}}~
\subfigure[]{\includegraphics[scale=.25]{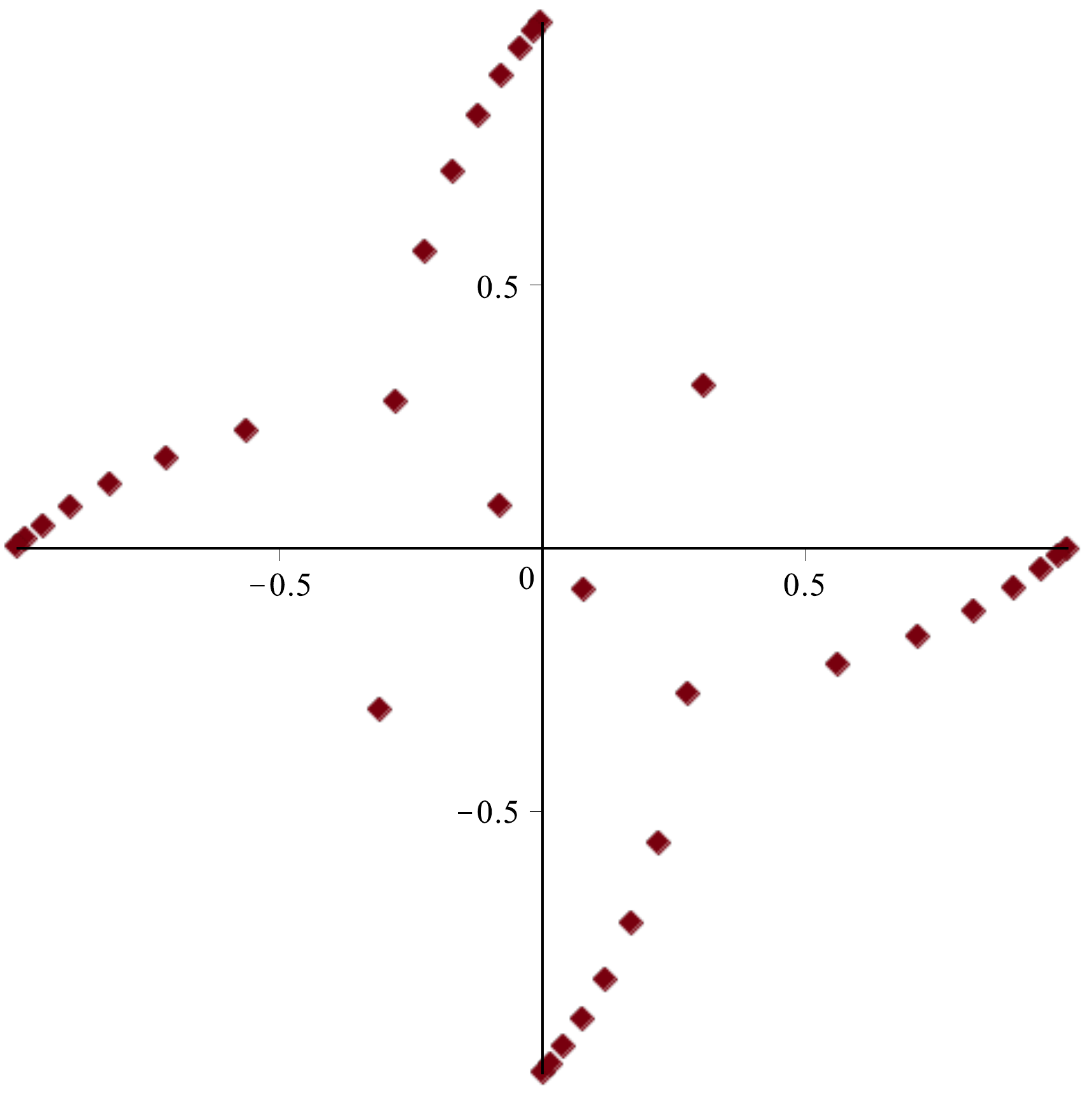}}
\caption{Poles of multipoint Pad\'e approximants to \( (z^4-1)^{-1/4} \) associated with interpolation schemes \( \mathcal V_* \), \( \mathcal V_*^\prime \), and \( \mathcal V_*^{\prime\prime} \).}
\label{fig:f4}
\end{figure}

Let now \( g(z) = (z^4-1)^{-1/4} \). Again, we have that \( \Delta_g=\Delta_\infty =  [-1,1]\cup[-\ic,\ic] \). Thus, if we use the interpolation scheme \( \mathcal V_* \), the poles of the corresponding multipoint  Pad\'e approximants must accumulate on \( [-1,1]\cup[-\ic,\ic] \), see Figure~\ref{fig:f4}(a) for the poles of \( [36/36;V_{72}]_g \). Likewise, the poles of \( [n/n;V_{2n}^\prime]_g \) and of \( [n/n;V_{2n}^\prime]_f \) accumulate on the same contour, see Figure~\ref{fig:f4}(b) for the poles of \( [60/60;V_{120}^\prime]_g \). However, the conjectural contour symmetric with respect to \( (\RS_*,V_*^{\prime\prime}) \) does not make \( g \) single-valued in its complement. That is, the surface \( \RS_* \) is no longer appropriate for the considered approximation problem. On Figure~\ref{fig:f4}(c) the poles of \( [34/34;V_{68}^{\prime\prime}]_g \) are depicted. It suggests that the appropriate surface should be given by \( \big\{(z,w):~w^2=(z^2-b^2)(z^4-1)\big\} \) for some unique \( b \). The genus of this surface is 2 and one can clearly see two poles with atypical behavior on Figure~\ref{fig:f4}(c).

\section{Symmetric Contours}
\label{sec:sym-proof}

The following observation will be important: if \( L \) is a smooth arc and \( g(z) \) is harmonic from each side of \( L \), \( g(s)=0 \) for \( s\in L \), and
\begin{equation}
\label{observation}
\frac{\partial g}{\partial\boldsymbol n_-} =\frac{\partial g}{\partial\boldsymbol n_+} \quad \text{on} \quad L^\circ,
\end{equation}
where \( L^\circ \) is \( L \) without the endpoints, then the harmonic continuation of \( g(z) \) across \( L^\circ \) is given by \( -g(z) \). Indeed, let \( H := \partial_z g \), where \( 2\partial_z := \partial_x - \ic\partial_y \). Then \( H(z) \) is a holomorphic function from each side of \( L \). Denote by \( \boldsymbol\tau(s) \) and \( \boldsymbol n_\pm(s) \) the unit tangent vector and the one-sided unit normal vectors to \( L^\circ \) at \( s\in L^\circ \). Further, put \( \tau(s) \) and \( n_\pm(s) \) to be the corresponding unimodular complex numbers, \( n_+(s)=\ic\tau(s) \). Then
\begin{multline*}
0 = \frac{\partial g}{\partial\boldsymbol\tau}(s) = \big\langle \nabla g(s), \boldsymbol\tau(s) \big\rangle = \mp2\im\big(n_\pm(s)H_\pm(s)\big) \quad \Rightarrow \\ \frac{\partial g}{\partial\boldsymbol n_\pm}(s) = \big\langle \nabla g(s), \boldsymbol n_\pm(s) \big\rangle = 2\re\big(n_\pm(s)H_\pm(s)\big) = n_\pm(s)H_\pm(s).
\end{multline*}
As \( n_+(s) = -n_-(s) \), \( -H(z) \) is the analytic continuation of \( H(z) \) across \( L^\circ \), which is equivalent to the original claim.

\subsection{Proof of Proposition~\ref{prop:connections}}

It follows from Theorem~\ref{thm:stahl} that \( \Delta_f \) is a branch cut for \( h_{D_f}(z) \) and the jump of \( h_{D_f}(z) \) across any subarc of \( \Delta_f \) is non-zero. According to the choice of the set \( E \), \( \RS \) is also the Riemann surface of \( h_{D_f}(z) \). Therefore, \( \Delta_f \) satisfies Definition~\ref{def:symmetry}(i). Realize \( \RS \) as in \eqref{realization} with \( \Delta \) and \( D \) replaced by \( \Delta_f \) and \( D_f \). Define \( g(\z) \) on \( \RS \) by lifting \( (-1)^ig_{D_f}(z) \) to \( D^{(i)} \) and extend it to \( \bd \) continuously by zero. Zeroing on \( \bd \), the S-property from Theorem~\ref{thm:stahl}(iii), and \eqref{observation} imply that \( g(\z) \) is harmonic across \( \bd \). Since only constants are harmonic on the entire surface \( \RS \), \( g(\z) = g(\z,\infty^{(0)}) \) and Definition~\ref{def:symmetry}(ii) follows.

To prove the second claim of the proposition, define \( \nu_n \) as in \eqref{asymp-measure}. Realize \( \RS \) as in \eqref{realization}. Define \( g_{\nu_n}(\z) \) on \( \RS \) by lifting \( (-1)^ig_D^{\nu_n}(z) \) to \( D^{(i)} \) and then extending it to \( \bd \) by continuity. It follows from the definition of the Green functions and Definition~\ref{def:g-fun} that
\[
g_{\nu_n}(\z) - g_n(\z), \quad g_n(\z) := \frac1{2n}\sum_{v\in V_{2n}}g\big(\z,v^{(0)} \big),
\]
is harmonic in each domain \( D^{(i)} \). Moreover, as \( g_{\nu_n} \equiv 0 \) on \( \bd \) and \( g_n=o(1) \) by Definition~\ref{def:symmetry}(ii), the above differences converge to zero uniformly on \( \RS \) by the maximum principle for harmonic functions. In another connection, let \( O \) be a neighborhood of \( \Delta \) such that \( O\cap V_{2n} = \varnothing \) for all \( n \). Then \( g_D^{\nu_n}(z) \) and \( g_D^\nu(z) \) are harmonic in \( O\setminus\Delta \). This and the weak\( ^* \) convergence imply that the functions \( g_D^{\nu_n}(z) \) converge to \( g_D^\nu(z) \) locally uniformly in \( O \). Define \( g(\z) \) by lifting \( (-1)^ig_D^\nu(z) \) to \( D^{(i)} \).  The previous two limits yield that \( g(\z) = g_n(\z) + o(1) \), where \( o(1) \) holds locally uniformly in \( \pi^{-1}(O) \). As the functions \( g_n(\z) \) are harmonic in \(  \pi^{-1}(O) \), so must be their uniform limit. This and claim \eqref{observation} finish the proof of the proposition.

\subsection{Functions \( g(\z,\ve) \)}

The main goal of this subsection is to show that
\begin{equation}
\label{flip}
g(\z,\ve) = g(\ve,\z).
\end{equation}
To this end, let us point out that the contour \( \Delta_v \) always exists (this is the first claim of Theorem~\ref{thm:scontours}). Indeed, let \( \bd_\ve \) be the zero level line of \( g(\cdot,\ve) \). Since the function \( g(\z,\ve) \) approaches \( + \infty \) as \( \z\to\ve \), approaches \( -\infty \) as \( \z\to\ve^* \), and is harmonic on \( \RS\setminus\{\ve,\ve^*\} \), \( \bd_\ve \) separates \( \RS \) into exactly two connected components. Symmetry \eqref{g-symmetry} then yields that \( \bd_\ve \) is involution-symmetric and the involution \( \cdot^* \) sends one component of \( \RS\setminus\bd_\ve \) into another. It only remains to notice that \( \Delta_v = \pi(\bd_\ve) \).

Fix \( \z \) and \( \ve \). Realize \( \RS \) as in \eqref{realization} with \( \Delta \) and \( D \) replaced by \( \Delta_v \) and \( D_v:=\overline\C\setminus\Delta_v \). Assume first that \( \z \) belongs to the same component of \( \RS\setminus\bd_\ve \) as \( \ve \), say \( D^{(0)} \). Then it easily follows from the properties of the Green function that
\begin{equation}
\label{flip1}
g\big(s^{(0)},v^{(0)}\big) = g_{D_v}(s,v), \quad s\in D_v.
\end{equation}
Denote by \( \partial D_v \) the boundary of \( D_v \) considered as a set of different accessible points from \( D_v \). Hence, every smooth point of \( \Delta_v \) appears twice in \( \partial D_v \) since it can be accessed from one side of the corresponding subarc of \( \Delta_v \) or the other. Let \( \big\{\widehat\delta_s\big\}_{s\in D_v} \) be the harmonic measure\footnote{If we denote by \( p \) the projection taking a point on \( \partial D_v \) (viewed as a set of accessible points) into the corresponding point on \( \Delta_v \), then the classically considered harmonic measure on \( \Delta_v \) is simply \( \big\{\widehat\delta_s\circ p^{-1}\big\}_{s\in D}\). } on \( \partial D_v \) (equivalently, \( \widehat\delta_s \) is the balayage of the Dirac delta distribution \( \delta(s) \) from \( D_v \) onto \( \partial D_v \), see \cite[Section II.4]{SaffTotik}). Then it holds that
\begin{eqnarray*}
g\big(s^{(0)},\z\big) & = & g_{D_v}(s,z) + \left( g\big(s^{(0)},\z\big) - g_D(s,z) \right) \\
& = & g_{D_v}(s,z) + \int_{\partial D_v} \left( g\big(t^{(0)},\z\big) - g_D(t,z) \right) \dd\widehat\delta_s(t) \\
\end{eqnarray*}
by the properties of harmonic measure. Moreover, since Green function is zero on the boundary of the domain, we get that
\begin{eqnarray*}
g\big(s^{(0)},\z\big) & = & g_{D_v}(s,z) + \int_{\partial D_v} g\big(t^{(0)},\z\big) \dd\widehat\delta_s(t) \\
& = & g_{D_v}(s,z) + \frac1{2\pi}\int_{\partial D_v} g\big(t^{(0)},\z\big) \frac{\partial g_{D_v}(\cdot,s)}{\partial\boldsymbol n}\big|_t\dd t,
\end{eqnarray*}
where \( \partial/\partial\boldsymbol n \) is the partial derivative with respect to the inner normal on \( \partial D_v \) and the second equality follows from \cite[Equation~(II.4.32) and Theorem~II.1.5]{SaffTotik}. Equivalently, using the one-sided normals \( \boldsymbol n_\pm \) on \( \Delta_v \), we can write
\begin{eqnarray*}
g\big(s^{(0)},\z\big) &=& g_{D_v}(s,z) + \frac1{2\pi}\int_{\Delta_v}\left( g_+\big(t^{(0)},\z\big) \frac{\partial g_{D_v}(\cdot,s)}{\partial\boldsymbol n_+}\big|_t + g_-\big(t^{(0)},\z\big) \frac{\partial g_{D_v}(\cdot,s)}{\partial\boldsymbol n_-}\big|_t \right)\dd t \\
& = & g_{D_v}(s,z) + \frac1{2\pi}\int_{\Delta_v}\left( \frac{\partial g_{D_v}(\cdot,s)}{\partial\boldsymbol n_+}\big|_t - \frac{\partial g_{D_v}(\cdot,s)}{\partial\boldsymbol n_-}\big|_t \right) g_+\big(t^{(0)},\z\big) \dd t,
\end{eqnarray*}
where the second equality follows from \eqref{g-symmetry}. When \( \s = \ve \), the last integral is equal to zero by claim \eqref{observation} and the very definition of \( \Delta_v \). Hence,
\begin{equation}
\label{flip2}
g\big(v^{(0)},z^{(0)}\big) = g_{D_v}(v,z)
\end{equation}
and the desired symmetry \eqref{flip} follows from \eqref{flip1}, \eqref{flip2}, and the well known symmetry of the Green function, see \cite[Theorem~II.4.9]{SaffTotik}. The cases when \( \z \) and \( \ve \) belong to different connected components of \( \RS\setminus\bd_\ve \) or \( \z\in\Delta_\ve \), can be shown similarly.

One consequence of \eqref{flip} is that \( g(\z,\ve) \) is a harmonic function of \( \ve\in\RS\setminus\{\z,\z^*\} \). Therefore, if \( \RS \) is realized as in \eqref{realization}, then for any closed set \( K\subset D \) there exists a constant \( C_K \) such that
\begin{equation}
\label{gE}
\big|g\big(\x,v_1^{(0)}\big)-g\big(\x,v_2^{(0)}\big)\big| \leq C_K|v_1-v_2|, \quad \x\in\bd, \quad v_1,v_2\in K.
\end{equation}

\subsection{Proof of Theorem~\ref{thm:scontours}}

The existence of \( \Delta_v \) was shown in the previous subsection. Thus, we only need to prove the second claim of the theorem. Set
\[
\Delta_\Xi := \Xi(\Delta) \qandq D_\Xi := \overline\C\setminus\Delta_\Xi,
\]
and realize \( \RS \) as in \eqref{realization} with \( \Delta \) replaced by \( \Delta_\Xi \). Denote by \( O \) the interior domain of \( \Xi(L_c) \). Define
\[
g\big(z^{(i)}\big) = (-1)^i g_{D_v}\big(\Xi^{-1}(z),v\big), \quad z\in O,
\]
where \( g_{D_v}(z,v) \) is the Green function with pole at \( v \) for \( D_v \). Then, as in the proof of Proposition~\ref{prop:connections},  \( g(\z) \) is harmonic in \( \pi^{-1}(O) \) and \( \pi^{-1}(L_c) \) necessarily consists of two level lines of \( g(\z) \). Assume that we can write
\begin{equation}
\label{representation}
g(\z) = \int_{\partial O}g(\z,s)\frac{\partial g}{\partial\boldsymbol n}(s) |\dd s|, \quad \z\in\pi^{-1}(O),
\end{equation}
where we adopt the convention \( f(z) := f\big(z^{(0)}\big) \) and \( f^*(z) := f\big(z^{(1)}\big) \), \( z\in D_\Xi \), for a function \( f(\z) \) on \( \RS \). Split \( \partial O \) into \( 2n \) disjoint (except for the endpoints) subarcs \( L_{2n,i} \) such that
\[
\int_{L_{2n,i}}\frac{\partial g}{\partial\boldsymbol n}(s) |\dd s| = \frac1{2n}\int_{\partial O}\frac{\partial g}{\partial\boldsymbol n}(s) |\dd s| =: \frac1{2nC}
\]
and pick \( v_{2n,i}\in L_{2n,i} \). Since \( g(\x) =0 \) for \( \x\in\bd \), we get from \eqref{representation} that
\begin{multline*}
g_n(\x) := \sum_{i=1}^{2n} g\big(\x,v_{2n,i}^{(0)}\big ) = 2nC \sum_{i=1}^{2n} \int_{L_{2n,i}} g\big(\x,v_{2n,i}^{(0)}\big)\frac{\partial g}{\partial\boldsymbol n}(v) |\dd v| = \\ = 2nC \sum_{i=1}^{2n} \int_{L_{2n,i}} \left( g\big(\x,v_{2n,i}^{(0)}\big) - g(\x,v) \right)\frac{\partial g}{\partial\boldsymbol n}(v) |\dd v|.
\end{multline*}
Hence, it holds that
\begin{eqnarray}
|g_n(\x)| & \leq & 2nC\left(\int_L\left|\frac{\partial g}{\partial\boldsymbol n}(v)\right||\dd v|\right) \max_i\max_{v_1,v_2\in L_{2n,i}}\left|g\big(\x,v_1^{(0)}\big) - g\big(\x,v_2^{(0)}\big)\right|, \nonumber \\
& \leq &  C^\prime n\max_i \diam(L_{2n,i}) \leq C^{\prime\prime}, \quad \x\in\bd, 
\label{E}
\end{eqnarray}
by \eqref{gE} for some constants \( C^\prime,C^{\prime\prime} \) that depend only on \( \Xi \). Define \(\nu_n \) as in \eqref{asymp-measure} with just selected sets \( V_{2n} = \{v_{2n,i}\}_{i=1}^{2n} \). The functions
\begin{equation}
\label{gandg}
g_n\big(z^{(1)}\big) + 2ng_{D_\Xi}^{\nu_n}(z)
\end{equation}
are harmonic in \( D_\Xi \) and have bounded traces on \( \Delta_\Xi \) according to \eqref{E}. By the maximum principle for harmonic functions they are uniformly bounded above and below in \( D_\Xi \). As the measures \( \nu_n \) are supported on \( \partial O \), which is compact, any sequence of them contains a weak\( ^* \) convergent subsequence by Helly's selection principle. Let \( \nu \) be the limit. Then
\[
g_{D_\Xi}^{\nu_n}(z) \to g_{D_\Xi}^\nu(z) \qasq n\to\infty, \quad z\in D_\Xi\setminus\partial O.
\]
As \( g_{D_\Xi}^\nu(z)> 0 \) in \( D_\Xi \), \( g\big(z^{(1)}\big)\to-\infty \) for every \( z\in D_\Xi\setminus\partial O \) by the conclusion after \eqref{gandg}. Therefore \( g\big(z^{(1)}\big)\to-\infty \) locally uniformly in \( D_\Xi \) by the maximum principle for subharmonic functions. This shows that the condition in Definition~\ref{def:symmetry}(ii) is fulfilled and thus finishes the proof of the theorem given representation \eqref{representation}.

To prove \eqref{representation}, let us recall the Green's formula stated in a form convenient for our purposes. Let \( U \) be an open set with piecewise smooth boundary and let \( a,b\) be two harmonic functions in \( U \) with piecewise smooth traces on \( \partial U \). Then
\begin{equation}
\label{Green}
\int_{\partial U} a(s)\frac{\partial b}{\partial\boldsymbol n}(s)|\dd s| = \int_{\partial U} b(s)\frac{\partial a}{\partial\boldsymbol n}(s)|\dd s|,
\end{equation}
where \( \partial / \partial\boldsymbol n \) is the partial derivative with respect to the inner normal on \( \partial U \) and \( |\dd s| \) is the arclength differential.

Given distinct \( \x,\y \in \RS\setminus\{\e_0\} \), denote by \( g(\cdot,\x,\y) \) a function that is harmonic in \( \RS\setminus\{ \x,\y\} \), normalized so that \( g(\e_0,\x,\y ) = 0 \), and such that
\[
g(\z,\x,\y) + \left\{\begin{array}{rl}
\log|z-x|, & |x|<\infty, \smallskip \\
-\log|z|, & x=\infty,
\end{array}
\right.
\qandq g(\z,\x,\y) - \left\{\begin{array}{rl}
\log|z-y|, & |y|<\infty, \smallskip \\
-\log|z|, & y=\infty,
\end{array}
\right.
\]
are harmonic around \( \x \) and \( \y \), respectively. Existence of such functions follows from the  same principles as the existence of \( g(\z,\ve) \) in Definition~\ref{def:g-fun}. Fix \( \z\in \pi^{-1}(O)\setminus\bd \) and denote by \( U \) a disk centered  at \( z \) of radius \( \delta>0 \) small enough so that \( \overline U\subset O\setminus\Delta_\Xi \). Then, assuming \( \z\in D^{(0)} \), it holds that
\begin{eqnarray*}
I & := & \int_{\partial (O\setminus \overline U)} g(s)\frac{\partial}{\partial\boldsymbol n} g\big(s,\z,\infty^{(1)}\big) |\dd s| + \int_{\partial O} g^*(s) \frac{\partial}{\partial\boldsymbol n} g^*\big(s,\z,\infty^{(1)}\big) |\dd s| \\
& = & c\int_{\partial O} \frac{\partial}{\partial\boldsymbol n} g\big(s,\z,\infty^{(1)}\big) |\dd s| - c\int_{\partial O} \frac{\partial}{\partial\boldsymbol n} g^*\big(s,\z,\infty^{(1)}\big) |\dd s| - \int_{\partial U} g(s)\frac{\partial}{\partial\boldsymbol n} g\big(s,\z,\infty^{(1)}\big) |\dd s|.
\end{eqnarray*}
Observe that \( g\big(\cdot,\z,\infty^{(1)}\big) \) is harmonic outside of \( \overline O \) and therefore
\[
\int_{\partial O} \frac{\partial}{\partial\boldsymbol n} g\big(s,\z,\infty^{(1)}\big) |\dd s| = 0
\]
by \eqref{Green}. Analogously, \eqref{Green} and the definition of \( g(\cdot,\x,\y) \) yield that
\[
\int_{\partial O} \frac{\partial}{\partial\boldsymbol n} g^*\big(s,\z,\infty^{(1)}\big) \dd s = -\int_{\partial O} \frac{\partial}{\partial\boldsymbol n} \log|s| |\dd s|  = \int_{|s|=r} \frac{\partial}{\partial r} \log r |\dd s| = 2\pi.
\]
for any \( r \) large. Furthermore, we have
\[
\int_{\partial U} g(s)\frac{\partial}{\partial\boldsymbol n} g\big(s,\z,\infty^{(1)}\big) |\dd s| =: I_* - \int_{\partial U} g(s)\frac{\partial}{\partial\boldsymbol n} \log|s-z| |\dd s| = I_* - \frac1\delta\int_{\partial U} g(s)|\dd s|.
\]
Thus, we get from the mean-value property of harmonic functions that
\begin{equation}
\label{I1}
I = -2\pi c + g(\z) - I_*.
\end{equation}
In another connection, since \( g(\x) =0 \) for \( \x\in\bd \), we deduce from \eqref{Green} that
\begin{eqnarray*}
I & := & \int_{\partial (O\setminus (\overline U\cup\Delta_\Xi))} g(s)\frac{\partial}{\partial\boldsymbol n} g\big(s,\z,\infty^{(1)}\big) |\dd s| + \int_{\partial (O\setminus\Delta_\Xi)} g^*(s) \frac{\partial}{\partial\boldsymbol n} g^*\big(s,\z,\infty^{(1)}\big) |\dd s| \\
& = & \int_{\partial (O\setminus (\overline U\cup\Delta_\Xi))} g\big(s,\z,\infty^{(1)}\big) \frac{\partial g}{\partial\boldsymbol n}(s) |\dd s| + \int_{\partial (O\setminus\Delta_\Xi)} g^*\big(s,\z,\infty^{(1)}\big) \frac{\partial g^*}{\partial\boldsymbol n}(s) |\dd s| \\
& = & \int_{\partial O} \left(g\big(s,\z,\infty^{(1)}\big) - g^*\big(s,\z,\infty^{(1)}\big) \right) \frac{\partial g}{\partial\boldsymbol n}(s) |\dd s| - \int_{\partial U} g\big(s,\z,\infty^{(1)}\big) \frac{\partial g}{\partial\boldsymbol n}(s) |\dd s|,
\end{eqnarray*}
where we also used the fact that \( g\big(s,\z,\infty^{(1)}\big) = g^*\big(s,\z,\infty^{(1)}\big) \) for \( s\in \Delta_\Xi \) while \( g(z) = - g^*(z) \). Clearly, it holds that
\[
g\big(s,\z,\infty^{(1)}\big) - g^*\big(s,\z,\infty^{(1)}\big) = g(s,\z ) + g\big(s,\infty^{(0)}\big) = g(\z,s) + g\big(s,\infty^{(0)}\big)
\]
by \eqref{flip}. Using \eqref{g-symmetry} and the symmetry of \( g(\z) \), we get that
\begin{eqnarray*}
\int_{\partial O} g\big(s,\infty^{(0)}\big) \frac{\partial g}{\partial\boldsymbol n}(s) |\dd s| & = & \frac12\int_{\partial O} \left(g\big(s,\infty^{(0)}\big) \frac{\partial g}{\partial\boldsymbol n}(s) + g^*\big(s,\infty^{(0)}\big) \frac{\partial g^*}{\partial\boldsymbol n}(s)\right) |\dd s| \\
& = & \frac12\int_{\partial (O\setminus\Delta_\Xi)} \left(g\big(s,\infty^{(0)}\big) \frac{\partial g}{\partial\boldsymbol n}(s) + g^*\big(s,\infty^{(0)}\big) \frac{\partial g^*}{\partial\boldsymbol n}(s)\right) |\dd s|.
\end{eqnarray*}
Then, it follows from \eqref{Green} that
\begin{eqnarray*}
\int_{\partial O} g\big(s,\infty^{(0)}\big) \frac{\partial g}{\partial\boldsymbol n}(s) |\dd s|  & = & \frac12\int_{\partial (O\setminus\Delta_\Xi)} \left( g(s) \frac{\partial }{\partial\boldsymbol n}g\big(s,\infty^{(0)}\big)  + g^*(s)\frac{\partial }{\partial\boldsymbol n}g^*\big(s,\infty^{(0)}\big)\right) |\dd s| \\
& = & \frac c2 \int_{\partial O}\left( \frac{\partial }{\partial\boldsymbol n}g\big(s,\infty^{(0)}\big)  - \frac{\partial }{\partial\boldsymbol n}g^*\big(s,\infty^{(0)}\big)\right) |\dd s| \\
& = & c \int_{\partial O}\frac{\partial }{\partial\boldsymbol n}\log|s| |\dd s| = -2\pi c.
\end{eqnarray*}
Moreover, it holds that
\begin{multline*}
\int_{\partial U} g\big(s,\z,\infty^{(1)}\big) \frac{\partial g}{\partial\boldsymbol n}(s) |\dd s| = -\log\delta \int_{\partial U}\frac{\partial g}{\partial\boldsymbol n}(s) |\dd s| + \\ \int_{\partial U} \left(g\big(s,\z,\infty^{(1)}\big) + \log|s-z|\right)\frac{\partial g}{\partial\boldsymbol n}(s) |\dd s| = I_*,
\end{multline*}
again by \eqref{Green}. Altogether, we have showed that
\begin{equation}
\label{I2}
I = \int_{\partial O}g(\z,s)\frac{\partial g}{\partial\boldsymbol n}(s) |\dd s| - 2\pi c - I_*.
\end{equation}
Hence, by combining \eqref{I1} and \eqref{I2}, we get \eqref{representation} for \( \z\in D^{(0)} \). Clearly, the proof for \( \z\in D^{(1)} \) is absolutely analogous, which then allows us to extend  \eqref{representation} to \( \bd \) by continuity.

\section{Nuttall-Szeg\H{o} Functions} 
\label{sec:NS}

In what follows, we set \( \RS_{\ualpha,\ubeta} := \RS\setminus\cup_{k=1}^g(\ualpha_k\cup\ubeta_k) \) and \( \RS_{\ualpha} := \RS\setminus\cup_{k=1}^g\ualpha_k \), where \( \{\ualpha_k,\ubeta_k\} \) is the chosen homology basis. When \( g=0 \), we have that \( \RS_{\ualpha,\ubeta} = \RS_{\ualpha} = \RS \).

\subsection{Riemann Theta Function}

Let \( \am \) be Abel's map defined in \eqref{abel-map}. Specializing divisors to a single point \( \z \), \( \am(\z) \) becomes a vector of holomorphic functions in \( \RS_{\ualpha,\ubeta} \) with continuous traces on the cycles of the homology basis that satisfy
\begin{equation}
\label{abel-jump}
\am_+(\s) - \am_-(\s) = \left\{
\begin{array}{rl}
-\boldsymbol B \vec e_k, & \s\in\ualpha_k, \medskip \\
\vec e_k, & \s\in\ubeta_k,
\end{array}
\right. \quad k\in\{1,\ldots,g\},
\end{equation}
by \eqref{B} and the normalization of \( \vec\Omega \). It readily follows from \eqref{abel-jump} that each \( (\am)_k \) is, in fact, holomorphic in \( \RS_{\ualpha}\setminus\ubeta_k \).

The theta function associated with \( \boldsymbol B \) is an entire transcendental function of \( g \) complex variables defined by
\[
\theta\left(\vec u\right) := \sum_{\vec n\in\Z^g}\exp\bigg\{\pi\mathrm{i}\vec n^\mathsf{T}\boldsymbol B\vec n + 2\pi\mathrm{i}\vec n^\mathsf{T}\vec u\bigg\}, \quad \vec u\in\C^g.
\]
As shown by Riemann, the symmetry of \( \boldsymbol B \) and positive definiteness of its imaginary part ensures the convergence of the series for any \( \vec u \). It can be directly checked that \( \theta(-\vec u)=\theta(\vec u) \) and it enjoys the following periodicity property:
\begin{equation}
\label{theta-periods}
\theta\left(\vec u + \vec j + \boldsymbol B\vec m\right) = \exp\bigg\{-\pi \mathrm{i}\vec m^\mathsf{T}\boldsymbol B \vec m - 2\pi \mathrm{i}\vec m^\mathsf{T}\vec u\bigg\}\theta\big(\vec u\big), \quad \vec j,\vec m\in\Z^g.
\end{equation}
It is also known that \( \theta\left(\vec u\right)=0 \) if and only if \( \vec u\equiv \am\left(\mathcal{D}_{\vec u}\right) + \vec K \) \( \big(\mdp \vec\Omega\big) \) for some effective divisor \( \mathcal{D}_{\vec u} \) of degree \( g-1 \) depending on \( \vec u \), where \( \vec K \) is a fixed  vector known as the vector of Riemann constants (it can be explicitly defined via \( \vec\Omega \)).

Assume that \( \mathcal D_n \) is the unique solution\footnote{Recall that otherwise it would contain an involution-symmetric pair of points. Then, as \( \am(\s) + \am(\s^*) \equiv 0 \), the expression \(  \am(\mathcal{D}_n) + \vec K -\am(\z) \) would belong to the zero set of \( \theta \) for any \( \z \).} of \eqref{main-jip}. Set
\begin{equation}
\label{thetan}
\Theta_n(\z) := \theta\left(\am(\z) - \am(\mathcal{D}_n) - \vec K\right) / \theta\left(\am(\z) - \am\big(g\infty^{(1)}\big) - \vec K\right).
\end{equation}
The function \( \Theta_n \) is a multiplicatively multi-valued meromorphic function on \( \RS \) with zeros at the points of the divisor \( \mathcal D_n \) of respective multiplicities, a pole of order \( g \) at \( \infty^{(1)} \), and otherwise non-vanishing and finite (there will be a reduction of the order of the pole at \( \infty^{(1)} \) when the divisor \( \mathcal D_n \) contains this point). In fact, it is meromorphic and single-valued in \( \RS_{\ualpha} \) and
\begin{eqnarray}
\Theta_{n+}  &=& \Theta_{n-} \exp\left\{2\pi \mathrm{i}\big((\am)_k\big(g\infty^{(1)}\big) - (\am)_k(\mathcal D_n)\big)\right\} \nonumber \\
\label{jump2}
&=& \Theta_{n-} \exp\left\{-2\pi \mathrm{i}\left(\vec c_\rho + \vec\omega_n + \boldsymbol B\vec\tau_n +\boldsymbol B\vec m_n\right)_k\right\} \quad \text{on} \quad \ualpha_k
\end{eqnarray}
by \eqref{theta-periods} and \eqref{abel-jump}, where \( \vec m_n,\vec j_n\in\Z^g \) are such that
\begin{equation}
\label{jnmn}
\am(\mathcal D_n)-\am\big(g\infty^{(1)}\big) = \vec c_\rho + \vec\omega_n + \boldsymbol B\vec\tau_n + \vec j_n +\boldsymbol B\vec m_n.
\end{equation}

\subsection{Szeg\H{o}-type Functions on \( \RS \)}

Let \( G_\ve \) be as defined before \eqref{constants}. Consider the differential
\begin{equation}
\label{Gn}
G_n(\z) := \frac12\sum_{|v_{2n,i}|<\infty}\left(\frac{\dd z}{z-v_{2n,i}} + G_{v_{2n,i}^{(0)}}(\z)\right) + \frac{2n-\deg(v_{2n})}2 G_{\infty^{(0)}}(\z).
\end{equation}
It is holomorphic on \( \RS \) except for a pole at every \( v_{2n,i}^{(1)} \), \( |v_{2n,i}|<\infty \), with residue equal to the multiplicity of \( v_{2n,i} \) in \( V_{2n} \), a pole at \( \infty^{(1)} \)  with residue \( n-\deg(v_{2n}) \),  and a pole at \( \infty^{(0)} \)  with residue \( -n \). Furthermore, since the cycles of the homology basis are involution-symmetric, it holds that
\begin{equation}
\label{periods}
(\vec \omega_n)_k = -\frac1{2\pi\mathrm i}\oint_{\ubeta_k} G_n \qandq (\vec \tau_n)_k = \frac1{2\pi\mathrm i}\oint_{\ualpha_k} G_n,
\end{equation}
\( k\in\{1,\ldots,g\} \), where the vectors \( \vec\omega_n,\vec\tau_n \) were defined in \eqref{constants}. Put
\begin{equation}
\label{Sn}
S_n(\z) := \exp\left\{\int_{\e_0}^\z G_n\right\} \left\{
\begin{array}{rl}
1, & \z\in D^{(0)}, \medskip \\
v_{2n}^{-1}(z), & \z\in D^{(1)}.
\end{array}
\right.
\end{equation}
Then \( S_n \) is a meromorphic in \( \RS_{\ualpha,\ubeta}\setminus\bd \) function with a pole of order \( n \) at \( \infty^{(0)} \), a zero of order \( n \) at \( \infty^{(1)} \), otherwise non-vanishing and finite, and such that
\begin{equation}
\label{jumpa}
S_{n+}(\x) = S_{n-}(\x)\left\{
\begin{array}{ll}
v_{2n}(x), & \x\in\bd, \medskip \\
\exp\left\{2\pi\mathrm{i}\big(\vec\omega_n\big)_k\right\}, & \x\in\ualpha_k, \medskip \\
\exp\left\{2\pi\mathrm{i}\big(\vec\tau_n\big)_k\right\}, & \x\in\ubeta_k,
\end{array}
\right. \quad k\in\{1,\ldots,g\}.
\end{equation}

Let \( \ugamma \) be an involution-symmetric, piecewise-smooth oriented chain on \( \RS \) that has only finitely many points in common with the \( \ualpha \)-cycles. Further, let $\lambda$ be a H\"older continuous function on \( \ugamma \). Denote by \( \Omega_{\z,\z^*} \) the normalized abelian differential of the third kind\footnote{It is a meromorphic differential with two simple poles at \( \z \) and \( \z^* \) with respective residues \( 1 \) and \( -1 \) normalized to have zero periods on the \( \ualpha \)-cycles.}. Set
\begin{equation}
\label{Lambda}
\Lambda(\z) := \frac1{4\pi\mathrm{i}}\oint_{\ugamma}\lambda\Omega_{\z,\z^*}, \quad \z\not\in\ugamma.
\end{equation}
It is known \cite[Eq.~(2.7)--(2.9)]{Zver71} that \( \Lambda \) is a holomorphic function in \( \RS_{\ualpha}\setminus\ugamma \), \( \Lambda(\z)+\Lambda(\z^*)\equiv0 \) there, the traces \( \Lambda_\pm \) are H\"older continuous and satisfy
\[
\Lambda_+(\z) - \Lambda_-(\z) = \frac12\left\{
\begin{array}{rl}
\displaystyle \lambda(\z)+\lambda(\z^*), & \z\in\ugamma, \medskip \\
\displaystyle -2\oint_{\ugamma} \lambda\Omega_k, & \z\in\ualpha_k\setminus\ugamma.
\end{array}
\right.
\]
That is, the differential \( \Omega_{\z,\z^*} \) is a discontinuous Cauchy kernel on \( \RS \) (it is discontinuous because \( \Lambda \) has additional jumps across the \( \ualpha \)-cycles).

Let \( \rho \) be a non-vanishing H\"older continuous function on \( \Delta \). Select a smooth branch of \( \log\rho \) and lift it \( \bd \), \( \lambda_\rho:=-\log\rho\circ\pi \). Define
\begin{equation}
\label{Sp}
S_\rho(\z) := \exp\big\{\Lambda_\rho(\z)\big\}, \qquad \vec c_\rho := -\frac1{2\pi\mathrm{i}}\oint_{\bd}\lambda_\rho\vec\Omega.
\end{equation}
Then $S_\rho$ is a holomorphic and non-vanishing function in $\RS_{\ualpha}\setminus\bd$ with continuous traces that satisfy
\begin{equation}
\label{jumpSp}
S_{\rho+}(\x) = S_{\rho-}(\x) \left\{
\begin{array}{ll}
\displaystyle \exp\big\{2\pi \mathrm{i}\big(\vec c_\rho\big)_k\big\}, & \x\in\ualpha_k, \medskip \\
1/\rho(x), & \x\in\bd.
\end{array}
\right.
\end{equation}

Next, let \( \vec m_n \) be defined by \eqref{jnmn}. Set \( \lambda_{\vec \tau_n} \) and \( \lambda_{\vec m_n} \) to be the functions on \( \ugamma=\cup\ubeta_k \) such that \( \lambda_{\vec \tau_n}\equiv-2\pi\mathrm{i}(\vec\tau_n)_k \) and \( \lambda_{\vec m_n}\equiv-2\pi\mathrm{i}(\vec m_n)_k \) on \( \ubeta_k \). Put
\begin{equation}
\label{stau}
S_{\vec \tau_n}(\z) := \exp\big\{\Lambda_{\vec\tau_n}(\z)\big\} \qandq  S_{\vec m_n}(\z) := \exp\big\{\Lambda_{\vec m_n}(\z)\big\}
\end{equation}
for \( \z\in\RS_{\ualpha,\ubeta} \). Both functions are holomorphic in \( \RS_{\ualpha,\ubeta} \) with continuous traces on the cycles of the homology basis that satisfy
\begin{equation}
\label{staujumps}
S_{\vec \tau_n+}(\x) = S_{\vec \tau_n-}(\x) \left\{
\begin{array}{ll}
\displaystyle \exp\big\{2\pi \mathrm{i}\big(\boldsymbol B\vec\tau_n\big)_k\big\}, & \x\in\ualpha_k, \medskip \\
\displaystyle \exp\big\{-2\pi \mathrm{i}\big(\vec\tau_n\big)_k\big\}, & \x\in\ubeta_k,
\end{array}
\right.
\end{equation}
where the first equality follows straight from \eqref{B}, and
\begin{equation}
\label{jump3}
S_{\vec m_n+}(\x) = S_{\vec m_n-}(\x) \exp\big\{2\pi \mathrm{i}\big(\boldsymbol B\vec m_n\big )_k\big\}, \quad \x\in\ualpha_k,
\end{equation}
where there are no jump across the \( \ubeta \)-cycles as each \( (\vec m_n)_k \) is an integer.

\subsection{Functions \( \Psi_n \)}

Given the functions \eqref{thetan}, \eqref{Sn}, \eqref{Sp}, \eqref{stau}, and an arbitrary constant \( C_n \), the product
\begin{equation}
\label{Psi}
\Psi_n:=C_n S_nS_\rho S_{\vec\tau_n}S_{\vec m_n}\Theta_n
\end{equation}
is a sectionally meromorphic function in \( \RS\setminus\bd \) with the divisor \eqref{PsiZeros} whose traces satisfy \eqref{PsiBoundary} by \eqref{jump2}, \eqref{jumpa}, \eqref{jumpSp}, \eqref{staujumps}, and \eqref{jump3}.

To show uniqueness, assume that there exists \( \Psi \) satisfying \eqref{PsiBoundary} and whose divisor is given by \( (n-g)\infty^{(1)}+\mathcal{D}-n\infty^{(0)} \) for some effective divisor \( \mathcal D \). Then \( \Psi/\Psi_n \) is a rational function on \( \RS \) with the divisor \(\mathcal D - \mathcal D _n \). Therefore, the degree of \( \mathcal D \) is \( g \), in which case \(\Psi/\Psi_n \) is the lift of a rational function on \( \overline \C \). As \( \mathcal D_n \) solves \eqref{main-jip} uniquely, it has no involution-symmetric pairs. Hence, \( \mathcal D = \mathcal D _n \) and therefore \( \Psi/\Psi_n \) is a constant.

This finishes the proof of Proposition~\ref{prop:nuttall-szego}. Let us now prove the first estimate in \eqref{maxPsi}. Put \( C_n= |v_{2n}(e_0)|^{1/2} \) in \eqref{Psi}. As mentioned after Definition~\ref{def:g-fun}, its holds that
\[
g(\z,\ve) = \re\left(\int_{\e_0}^\z G_\ve\right).
\]
Thus, it follows from \eqref{Gn} and \eqref{Sn} that
\begin{equation}
\label{explicit}
\big| C_nS_n\big(z^{(1)}\big) \big| = \exp\left\{ \frac12\sum_{i=0}^{2n} g\big(z^{(1)},v_{2n,i}^{(0)}\big)\right\} / \big|v_{2n}(z) \big|^{1/2}, \quad z\in D.
\end{equation}
Further,  as \( \boldsymbol B \) has a positive definite imaginary part, any vector \( \vec u\in\C^g \) can be uniquely written as \( \vec x+\boldsymbol B\vec y \) for some \( \vec x,\vec y\in\R^g \). Since the image of the closure of \( \RS_{\ualpha,\ubeta} \) under Abel's map is bounded in \( \C^g \), so are the vectors \( \vec\omega_n+\vec j_n \) and \( \vec\tau_n+\vec m_n \) by \eqref{jnmn}. Therefore, 
\begin{equation}
\label{explicit2}
\left|S_\rho S_{\vec\tau_n}S_{\vec m_n}\right| \leq C
\end{equation}
uniformly with \( n \) in \( \RS_{\ualpha,\ubeta} \) for some absolute constant \( C \). Denote by \( \mathfrak D\) the closure of \( \{\mathcal D_n\}_{n\in\N_*} \) in \( \RS^g/\Sigma_g \)-topology. Associate to each \( \mathcal D\in\mathfrak D \) a function \( \Theta_{\mathcal D} \) defined as in \eqref{thetan} with \( \mathcal D_n \) replaced by \( \mathcal D \). The functions \( \Theta_{\mathcal D}/w \) are holomorphic in \( D^{(1)} \) and continuously depend on \( \mathcal D \). Therefore, they form a normal family \( D^{(1)} \), i.e., for any bounded set \( K\subset D \) there exists a constant \( C_K(\mathfrak D) \) such that
\begin{equation}
\label{explicit3}
\big| \Theta_n\big(z^{(1)}\big)/w(z) \big| \leq C_K(\mathfrak D), \quad n\in \N_*, \quad z\in K.
\end{equation}
Estimates \eqref{explicit}-\eqref{explicit3} immediately yield the first estimate in \eqref{maxPsi}. Observe also that the argument leading to \eqref{explicit3}, in fact, shows that the sequence \( \big\{ |\Theta_n| \big\} \) is uniformly bounded above on any closed subset of \( \RS\setminus\big\{ \infty^{(1)} \big\} \). Therefore, it holds that
\begin{equation}
\label{second-est}
\big| \Psi_n\big(z^{(1)}\big) \big| \leq \widetilde C_O(\mathfrak D) \exp\left\{ \frac12\sum_{i=0}^{2n} g\big(z^{(1)},v_{2n,i}^{(0)}\big) \right\}, \quad n\in \N_*, \quad z\in \overline O,
\end{equation}
for any open bounded set \( O\subset D \).

\subsection{Functions \( \Psi_n\Upsilon_n \)}

We start with the proof of Proposition~\ref{prop:secondary}. We are looking for a rational function \( \Upsilon_n \) with the divisor of the form \( \widetilde{\mathcal D}_n + \infty^{(0)} - \mathcal D_n - \infty^{(1)} \) for some effective divisor \( \widetilde{\mathcal D}_n \) of degree \( g \). By Abel's theorem, it must hold that
\begin{equation}
\label{secondary-jip}
\am(\widetilde{\mathcal D}_n)\, \equiv \, \am\big(\mathcal D_n + \infty^{(1)} - \infty^{(0)})  \quad \big(\mdp \vec\Omega\:\big).
\end{equation}
The above Jacobi inversion problem is always solvable and \( \Upsilon_n \) is unique up to a multiplicative factor if and only if the solution of \eqref{secondary-jip} is unique. If it were not, it would contain some and therefore any involution-symmetric pair. In particular, there would exist a solution containing \( \infty^{(1)} \). As \( \mathcal D_n \) has no involution-symmetric pairs, Abel's theorem and \eqref{secondary-jip} would yield that \( \mathcal D_n \) contains \( \infty^{(0)} \), which is impossible by the conditions of the proposition. This argument also shows that \( \Upsilon_n \) can have only a simple pole at \( \infty^{(1)} \).

It only remains to prove the second estimate in \eqref{maxPsi}. We shall show that \( \Psi_n\Upsilon_n \) admits a decomposition similar to \eqref{Psi}. To this end, denote by \( \widetilde{\mathfrak D} \) the closure of \( \{\widetilde{\mathcal D}_n\}_{n\in\N_*} \) in \( \RS^g/\Sigma_g \)-topology. Then \( \widetilde{\mathfrak D} \) has no divisors containing involution-symmetric pairs nor \( \infty^{(1)} \). The proof of this fact is exactly the same as in Proposition~\ref{prop:secondary}, where we use compactness of \( \RS^g/\Sigma_g \) and continuity of Abel's map to go from sequences to their limit points. Further, put
\[
\Phi(\z) := \exp\left\{\int_{\e_0}^\z G_{\infty^{(0)}}\right\}.
\]
Define the real vectors \( \vec\omega,\vec\tau \) by \eqref{periods} with \( G_n \) replaced by \( G_{\infty^{(0)}} \). Then, as in the case of \eqref{jumpa}, it holds that
\[
\Phi_+(\x) = \Phi_-(\x)\left\{
\begin{array}{ll}
\exp\left\{2\pi\mathrm{i}(\vec\omega)_k\right\}, & \x\in\ualpha_k, \medskip \\
\exp\left\{2\pi\mathrm{i}(\vec\tau\:)_k\right\}, & \x\in\ubeta_k.
\end{array}
\right.
\]
Notice that the differentials \( G_{\infty^{(0)}} \) and \( \Omega_{\infty^{(1)},\infty^{(0)}} \) have the same poles with the same residues. Thus, they differ by a holomorphic differential. From the normalization on the \( \ualpha \)-cycles we see that
\[
G_{\infty^{(0)}} = \Omega_{\infty^{(1)},\infty^{(0)}} + 2\pi\mathrm i\sum_{k=1}^g (\vec \tau\:)_k\Omega_k.
\]
Then it follows from Riemann's relations and \eqref{B} that
\[
(\am)_k\big(\infty^{(1)}-\infty^{(0)}\big) = \frac1{2\pi\mathrm i}\oint_{\ubeta_k}\Omega_{\infty^{(1)},\infty^{(0)}} = -(\vec\omega)_k - (\boldsymbol B\vec\tau)_k.
\]
Hence, we deduce from \eqref{main-jip} and \eqref{secondary-jip} that
\[
\am(\widetilde{\mathcal D}_n)\, \equiv \, \am\big(g\infty^{(1)}\big) + \vec c_\rho + (\vec\omega_n-\vec\omega) + \boldsymbol B(\vec\tau_n-\vec\tau)  \quad \big(\mdp \vec\Omega\:\big).
\]
Let \( \vec l_n,\vec k_n \) be defined by
\[
\am(\widetilde{\mathcal D}_n)-\am\big(g\infty^{(1)}\big) = \vec c_\rho + \vec\omega_n - \vec\omega + \boldsymbol B(\vec\tau_n -\vec\tau) + \vec l_n +\boldsymbol B\vec k_n.
\]
As before, it holds that \( \vec\tau_n -\vec\tau + \vec k_n \) is a bounded sequence of vectors and therefore so is \( \vec m_n - \vec k_n \). Then it can be verified as in the proof of Proposition~\ref{prop:nuttall-szego} that
\begin{equation}
\label{PsiUpsilon}
\Psi_n\Upsilon_n = C_nS_n S_\rho S_{\vec\tau_n-\vec\tau}S_{\vec k_n}\widetilde \Theta_n\Phi,
\end{equation}
where \( \widetilde \Theta_n \) is defined as in \eqref{thetan} with \( \mathcal D_n \) replaced by \( \widetilde{\mathcal D}_n \).The proof of the second estimate in \eqref{maxPsi} is now exactly the same as the proof of the first. Moreover, as in the case of \( \Psi_n \), it also holds that
\begin{equation}
\label{second-est2}
\big| (\Psi_n\Upsilon_n)\big(z^{(1)}\big) \big| \leq \widetilde C_O(\mathfrak D)\exp\left\{ \frac12\sum_{i=0}^{2n} g\big(z^{(1)},v_{2n,i}^{(0)}\big) \right\}, \quad n\in \N_*, \quad z\in \overline O,
\end{equation}
for any open bounded set \( O\subset D \).  

\subsection{Normalizing Constants}

Define
\begin{equation}
\label{gammas}
1/\gamma_n := \lim_{z\to\infty}\Psi_n\big(z^{(0)}\big)z^{-n} \qandq 1/\gamma_n^* := \lim_{z\to\infty}(\Psi_n\Upsilon_n)\big(z^{(1)}\big)z^{n-g-1}.
\end{equation}
The previous considerations imply that both constants are non-zero and finite when \( n\in\N_* \). Furthermore, it holds that
\begin{equation}
\label{gammas-asymp}
C^{-1}(\N_*) \leq |\gamma_n\gamma_n^*| \leq C(\N_*)
\end{equation}
for some constant \( C(\N_*) \). Indeed, we get from \eqref{g-symmetry} and \eqref{explicit} that
\[
\big|C_n^2S_n(\z)S_n(\z^*)\big| \equiv1, \quad z\in\RS.
\]
Recall also that the function \( \Lambda \) from \eqref{Lambda} was such that \( \Lambda(\z)+\Lambda(\z^*)\equiv0 \). Therefore,
\[
(S_\rho S_{\vec\tau_n})(\z)(S_\rho S_{\vec\tau_n})(\z^*) \equiv 1, \quad \z\in\RS.
\]
Similarly, it is easy to verify that \( S_{\vec m_n}(\z)S_{\vec k_n}(\z^*) = S_{\vec m_n-\vec k_n}(\z) \). Hence, it holds that
\[
1/(\gamma_n\gamma_n^*) = \big(\Theta_nS_{\vec m_n-\vec k_n-\vec\tau}\big)\big(\infty^{(0)}\big)\lim_{z\to\infty}\big(\widetilde\Theta_n\Phi)\big(z^{(1)}\big)z^{-g-1}.
\]
The claim \eqref{gammas-asymp} now follows from the boundedness of the vectors \( \vec m_n-\vec k_n-\vec\tau \) and therefore of the corresponding Szeg\H{o}-type functions, the continuity of the dependence of the theta functions on the divisors \( \mathcal D_n \) and \( \widetilde{\mathcal D}_n \), and the fact that the sets \( \mathfrak D \) and \( \widetilde{\mathfrak D} \) contain no divisors with involution-symmetric pairs (in which case the corresponding theta function would be identically zero), nor divisors containing \( \infty^{(0)} \) in the case of \( \mathfrak D \) (otherwise the theta function would vanish at \( \infty^{(0)} \)), nor divisors containing \( \infty^{(1)} \) in the case of \( \widetilde{\mathfrak D} \) (otherwise the theta function would have a pole of order strictly less than \( g \) at \( \infty^{(1)} \)).

\section{Asymptotics of the Approximants}
\label{sec:asymp}

For brevity, let us set
\[
\boldsymbol I = \left(\begin{matrix} 1 & 0 \\ 0 & 1 \end{matrix}\right) \qandq \sigma_3 = \left(\begin{matrix} 1 & 0 \\ 0 & -1 \end{matrix}\right).
\]
To prove Theorem~\ref{thm:main}, we follow by now classical approach of Fokas, Its, and Kitaev \cite{FIK91,FIK92} connecting orthogonal polynomials to matrix Riemann-Hilbert problems and then utilizing the non-linear steepest descent method of Deift and Zhou \cite{DZ93}. To deal with non-analytic densities, we use the idea of extensions with controlled \( \bar\partial \)-derivative introduced by Miller and McLaughlin \cite{McLM08} and adapted to the setting of Pad\'e approximants by Baratchart and the author \cite{BY10}.

\subsection{Riemann-Hilbert Approach}

Consider the following Riemann-Hilbert problem for \( 2\times2 \) matrix functions (\rhy):
\begin{itemize}
\label{rhy}
\item[(a)] \( \boldsymbol Y \) is analytic in \( \overline\C\setminus\Delta \) and \( \displaystyle \lim_{z\to\infty} \boldsymbol Y(z)z^{-n\sigma_3} = \boldsymbol I \);
\item[(b)] \( \boldsymbol Y \) has continuous traces on \( \Delta^\circ \) that satisfy \( \displaystyle \boldsymbol Y_+ = \boldsymbol Y_- \left(\begin{matrix} 1 & \rho/(v_{2n}w_+) \smallskip \\ 0 & 1 \end{matrix}\right) \);
\item[(c)] \( \boldsymbol Y \) is bounded near those points in \( \Delta\setminus\Delta^\circ \) that do not belong to \( E \) and
\[
\boldsymbol Y(z) = \mathcal O\left(\begin{matrix} 1 & |z-e|^{-1/2} \smallskip \\ 1 & |z-e|^{-1/2}\end{matrix}\right)
\]
as \( D\ni z\to e \) near each \( e\in E \), where \( \Delta^\circ \) is the union of all the smooth points of \( \Delta \) (the collection of all the Jordan arcs in \( \Delta \) without their endpoints).
\end{itemize}

To connect \hyperref[rhy]{\rhy} to the polynomials \( q_n \), we also need to introduce near diagonal multi-point Pad\'e approximants
\[
[n+1/n-1;V_{2n}]_{f_\rho} =: \frac{\widetilde p_n}{\widetilde q_n}, \quad \widetilde R_n := \frac{\widetilde q_nf_\rho-\widetilde p_n}{v_{2n}},
\]
see Definition~\ref{def:pade}. Then the following lemma holds.

\begin{lemma}
\label{lem:rhy}
Assume that the polynomial \( q_n \) and the function \( \widetilde R_n \) are such that
\begin{equation}
\label{assumption}
\deg(q_n)=n \quad \text{and} \quad \widetilde R_n(z)\sim z^{-n} \qasq z\to\infty.
\end{equation}
Let \( k_n \) be a constant such that \( k_n\widetilde R_n(z)=z^{-n}[1+o(1)] \) near infinity. Then \hyperref[rhy]{\rhy} is solved by
\begin{equation}
\label{eq:y}
\boldsymbol Y = \left(\begin{matrix}
q_n & R_n \\
k_n \widetilde q_n & k_n\widetilde R_n
\end{matrix}\right).
\end{equation}
Conversely, if \hyperref[rhy]{\rhy} is solvable, then its solution necessarily has the form \eqref{eq:y} and the polynomial \( q_n \) and the function \( \widetilde R_n \) satisfy \eqref{assumption}.
\end{lemma}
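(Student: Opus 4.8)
The plan is to use the classical Fokas--Its--Kitaev dictionary: the conditions defining \rhy\ are a transcription, via the Sokhotski--Plemelj jump formula, of the interpolation conditions defining the denominators $q_n$ and $\widetilde q_n$, and the converse will follow from two applications of Liouville's theorem once $\det\boldsymbol Y\equiv1$ is known.

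\emph{The direct implication.} Assuming \eqref{assumption}, I would set $\boldsymbol Y$ to be the matrix \eqref{eq:y} and check \rhy(a)--(c) in turn. Analyticity in $\overline\C\setminus\Delta$ is immediate since the first column is polynomial and the second column, $R_n$ and $k_n\widetilde R_n$, shares the domain of analyticity of $f_\rho$ by construction. The normalization at infinity is verified entrywise: $q_n$ monic of degree $n$ (this is where the first half of \eqref{assumption} enters) gives $q_nz^{-n}\to1$; $\deg\widetilde q_n\le n-1$ gives $k_n\widetilde q_nz^{-n}\to0$; the interpolation property \eqref{Rn} gives $R_n=\mathcal O(z^{-n-1})$, whence $R_nz^{n}\to0$; and the second half of \eqref{assumption}, which makes $k_n$ finite and nonzero, gives $k_n\widetilde R_nz^{n}\to1$. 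For \rhy(b), the Sokhotski--Plemelj formula applied to \eqref{f-rho} gives $f_{\rho+}-f_{\rho-}=\rho/w_+$ on $\Delta^\circ$; since $q_n,\widetilde q_n,v_{2n}$ are polynomials with no jump, the rows $(q_n,R_n)$ and $(\widetilde q_n,\widetilde R_n)$ pick up exactly the upper-triangular jump of \rhy(b). For \rhy(c) I would invoke the standard boundary behavior of Cauchy transforms of H\"older densities ($\rho$ being Lipschitz under the hypotheses of Theorem~\ref{thm:main}): near a ramification point $e\in E$ the weight $w_+$ has a square-root zero, so $f_\rho$ and hence $R_n,\widetilde R_n$ are $\mathcal O(|z-e|^{-1/2})$; near a self-intersection point of $\Delta$ the logarithmic contributions of the Cauchy integral from the incident arcs cancel in pairs (those arcs group into consecutive pieces of $\Delta$ along which $\rho/w_+$ extends continuously), so $f_\rho$ stays bounded. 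This is exactly the local behavior required in \rhy(c).

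\emph{The converse.} Let $\boldsymbol Y$ solve \rhy. Since the jump matrix has unit determinant, $\det\boldsymbol Y$ has no jump across $\Delta^\circ$; near $E$ it is $\mathcal O(|z-e|^{-1/2})$ and near the self-intersection points it is bounded, so those isolated singularities are removable; and $\det\boldsymbol Y\to1$ at infinity. By Liouville, $\det\boldsymbol Y\equiv1$, so $\boldsymbol Y^{-1}=\operatorname{adj}\boldsymbol Y$ is available. Now take the genuine Pad\'e data $q_n,R_n$ and the near-diagonal data $\widetilde q_n,\widetilde R_n$ from Definition~\ref{def:pade}. Each of the row vectors $(q_n,R_n)$ and $(\widetilde q_n,\widetilde R_n)$ satisfies the same jump relation as a row of $\boldsymbol Y$, so $(q_n,R_n)\boldsymbol Y^{-1}$ and $(\widetilde q_n,\widetilde R_n)\boldsymbol Y^{-1}$ have no jump across $\Delta^\circ$, are $\mathcal O(|z-e|^{-1/2})$ near $E$ (products of $\mathcal O(1)$- and $\mathcal O(|z-e|^{-1/2})$-type entries) and bounded at the self-intersections, hence extend to entire row vectors. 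Using $\deg q_n\le n$ and $R_n=\mathcal O(z^{-n-1})$ on the one hand, and $\deg\widetilde q_n\le n-1$ and $\widetilde R_n=\mathcal O(z^{-n})$ on the other, one sees these entire row vectors are bounded at infinity, hence constant, and comparison with the asymptotics in \rhy(a) pins them to $(1,0)$ and $(0,c)$ respectively, with $c\ne0$ because $\widetilde q_n\not\equiv0$. Thus $(q_n,R_n)=(1,0)\boldsymbol Y=(Y_{11},Y_{12})$, which forces $\deg q_n=n$, and $(\widetilde q_n,\widetilde R_n)=(0,c)\boldsymbol Y=c(Y_{21},Y_{22})$, which forces $\widetilde R_n=cY_{22}\sim z^{-n}$; together these give \eqref{assumption}, and with $k_n:=1/c$ the two identities are precisely \eqref{eq:y}.

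\emph{Main obstacle.} The only nonroutine ingredient is the local analysis at the endpoints of the arcs of $\Delta$: bounding $R_n$ and $\widetilde R_n$ (equivalently $f_\rho$) by $|z-e|^{-1/2}$ at the ramification points, and --- the more delicate point --- showing they remain bounded at the self-intersection points of $\Delta$, which rests on the pairwise cancellation of the logarithmic terms of the Cauchy transform there. With those bounds secured, the rest is bookkeeping with the Sokhotski--Plemelj formula and Liouville's theorem.
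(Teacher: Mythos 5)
Your forward direction coincides with the paper's: the normalization at infinity is checked entrywise, \rhy(b) is the Sokhotski--Plemelj formula applied to $f_\rho$, and \rhy(c) is obtained by the same endpoint analysis --- square-root growth of $R_n,\widetilde R_n$ at $e\in E$, and boundedness at the remaining non-smooth points of $\Delta$ because the logarithmic coefficients of the Cauchy transform coming from the incident arcs cancel, which the paper phrases via continuity of $\rho$ and unramifiedness of $w$ at such points.

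Your converse is a genuinely different route that is also correct. After establishing $\det\boldsymbol Y\equiv1$ exactly as the paper does, the paper reads off the entries of $\boldsymbol Y$ directly: $[\boldsymbol Y]_{11}$ is an entire monic polynomial of degree $n$, $[\boldsymbol Y]_{12}$ is identified with $\bigl([\boldsymbol Y]_{11}f_\rho-p\bigr)/v_{2n}$ for a polynomial $p$, and the $\mathcal O(z^{-n-1})$ decay exhibits $\bigl([\boldsymbol Y]_{11},p\bigr)$ as a solution of \eqref{Rn}, forcing $[\boldsymbol Y]_{11}=q_n$, with the second row treated likewise. You instead take the candidate rows $(q_n,R_n)$ and $(\widetilde q_n,\widetilde R_n)$, multiply on the right by $\boldsymbol Y^{-1}$, and show the products are jump-free with removable endpoint singularities and bounded at infinity, hence constant. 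Both are standard variants of the Fokas--Its--Kitaev reasoning; yours is the usual ``quotient against the reference solution'' idiom, while the paper's extracts the Pad\'e data out of the unknown $\boldsymbol Y$ rather than presupposing it.

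One step in your converse is compressed in a way that hides the actual content of the lemma: ``comparison with the asymptotics pins them to $(1,0)$'' deserves unpacking, because that comparison is \emph{where} $\deg q_n=n$ gets proved rather than a routine normalization. The asymptotics give the second component of $(q_n,R_n)\boldsymbol Y^{-1}$ equal to $0$ outright, and give its first component equal to $\lim_{z\to\infty}q_n\,[\boldsymbol Y]_{22}$, which is $1$ when $\deg q_n=n$ but $0$ when $\deg q_n<n$. The value $0$ is excluded because it would force $(q_n,R_n)=(0,0)\,\boldsymbol Y$ to vanish identically, contradicting that $q_n$ is monic; that exclusion is precisely the argument for $\deg q_n=n$. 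Likewise $c\neq0$ because otherwise $\widetilde q_n$ and $\widetilde R_n$ would both vanish identically. With those two observations made explicit your argument closes cleanly.
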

\begin{proof}
Let \( \boldsymbol Y \) be given by \eqref{eq:y}. The functions \( R_n,\widetilde R_n \) are clearly holomorphic outside of \( \Delta \). Since \( R_n(z) = \mathcal O\big(z^{-n-1}\big) \), \( \deg(\widetilde q_n) = n-1 \), and we assume \eqref{assumption}, \hyperref[rhy]{\rhy}(a) is immediate. It follows from Sokhotski-Plemelj formulae \cite[Section~4.2]{Gakhov} that
\[
f_{\rho+}(s) - f_{\rho-}(s) = \rho(s)/w_+(s), \quad s\in\Delta^\circ.
\]
Therefore, \hyperref[rhy]{\rhy}(b) is an easy consequence of \eqref{Rn}. Furthermore, both functions \( R_n,\widetilde R_n \) behave like \( \mathcal O\big(|z-e|)^{-1/2}\big) \) near \( e\in E \) by \cite[Section 8.4]{Gakhov} and near those \( e\in\Delta\setminus\Delta^\circ \) that are not in \( E \) it holds that
\[
f_\rho(z) = \frac1{2\pi\mathrm i}\left(\sum_{\Delta_j}\lim_{\Delta_j\ni s\to e}\frac{\rho(s)}{w_+(s)}\right) \log|z-e| + \mathcal O(1),
\]
where the sum is taken over all the open Jordan arcs \( \Delta_j\subseteq\Delta^\circ \) incident with \( e \). Since \( \rho \) is continuous at \( e \) which is not a branch point of \( w \), the sum in parenthesis is equal to zero. Hence, the functions \( R_n,\widetilde R_n \) are indeed bounded near such \( e \) and \hyperref[rhy]{\rhy}(c) does hold for \( \boldsymbol Y \) given by \eqref{eq:y}.

Conversely, let \( \boldsymbol Y \) be a solution of \hyperref[rhy]{\rhy}. It is necessarily unique. Indeed, \( \det(\boldsymbol Y) \) is a holomorphic function in \( \overline\C\setminus (\Delta\setminus\Delta^\circ) \) and \( \det(\boldsymbol Y)(\infty)=1 \). Since it has at most square root singularity at points of \( \Delta\setminus\Delta^\circ \), those singularities are in fact removable and therefore \( \det(\boldsymbol Y) \) is a bounded entire function. That is, \( \det(\boldsymbol Y)\equiv1 \) as follows from the normalization at infinity. Hence, if \( \widetilde{\boldsymbol Y} \) is another solution, \( \widetilde{\boldsymbol Y}\boldsymbol Y^{-1} \) is an entire matrix-function which is equal to \( \boldsymbol I \) at infinity, i.e., \( \widetilde{\boldsymbol Y} = \boldsymbol Y \). 

Now, we see from \hyperref[rhy]{\rhy}(a,b) that \( [\boldsymbol Y]_{11} \)  is a monic polynomial of degree \( n \). We also see that \( [\boldsymbol Y]_{12} - R_n \) has no jump on \( \Delta^\circ \) and can have at most square root singularities at \( e\in E \). Thus, \( [\boldsymbol Y]_{12} = ([\boldsymbol Y]_{11}f_\rho - p)/v_{2n} \) for some polynomial \( p \). Since \( [\boldsymbol Y]_{12} \) is holomorphic in \( D \) and vanishes at infinity with order at least \( n+1 \),  \( p,[\boldsymbol Y]_{11} \) are solutions of the linear system \eqref{Rn}. The uniqueness yields that \( [\boldsymbol Y]_{11} = q_n \) and \( p = p_n \). The second row of \( \boldsymbol Y \) can be analyzed analogously.
\end{proof}

\subsection{Riemann-Hilbert-\( \bar\partial \) Problem}

The next step is based on separating the jump in \hyperref[rhy]{\rhy}(b) into two and moving one of them away from \( \Delta \). This will require  extending \( \rho \) from \( \Delta \) into the complex plane. If \( \rho \) is holomorphic in some neighborhood of \( \Delta \), then this is the extension we shall use. Otherwise our construction is based on the following specialization of \cite[Theorem~1.5.2.3]{Grisvard}.

\begin{theorem}
\label{thm:sobolev}
Let \( L_1 \) and \( L_2 \) be two disjoint open analytic arcs with common endpoints that meet at non-zero angles there. Let \( g_i \), \( i\in\{1,2\} \), be a function in \( W_p^{1-1/p}(L_i) \), \( p>2 \) (replace \( \Delta \) with \( L_i \) in \eqref{sobolev}). If \( g_1 \) and \( g_2 \) have the same values at the endpoints of the arcs, then there exists a function \( G \in W_p^1(O) \) such that its boundary values on \( L_i \) are equal to \( g_i \), where \( O \) is the bounded domain delimited by \( L_1 \) and \( L_2 \), and \( W_p^1(O) \) is the subspace of \( L^p(O) \) consisting of functions whose weak partial derivatives are also in \( L^p(O) \). The construction of the function \( G \) is independent of \( p \).
\end{theorem}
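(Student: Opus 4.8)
The statement is a direct specialization of Grisvard's trace theorem for curvilinear polygons \cite[Theorem~1.5.2.3]{Grisvard}, so the plan is to set up the correspondence and then verify that the general compatibility conditions there collapse to the single matching condition appearing in Theorem~\ref{thm:sobolev}.

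First I would recall the ambient result in the form needed here: if \( \Omega\subset\R^2 \) is a bounded domain whose boundary is a curvilinear polygon with \( C^\infty \) (a fortiori, real-analytic) sides \( \Gamma_1,\dots,\Gamma_N \) meeting pairwise at their common endpoints at non-zero interior angles, and \( 1<p<\infty \), then the trace map \( u\mapsto\big(u|_{\Gamma_i}\big)_{i=1}^N \) carries \( W_p^1(\Omega) \) onto the closed subspace of \( \prod_{i=1}^N W_p^{1-1/p}(\Gamma_i) \) cut out by Grisvard's vertex compatibility conditions, and this surjection admits a bounded linear right inverse given by an explicit formula (mollification/reflection against fixed kernels) that does not involve \( p \). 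I would then apply this with \( \Omega=O \): its boundary is the curvilinear ``digon'' \( L_1\cup L_2 \) together with its two common endpoints, the two sides are analytic, and the non-zero-angle hypothesis at both vertices is exactly what is assumed. If one insists that the polygon have at least three sides, one inserts an artificial vertex of interior angle \( \pi \) at an interior point of \( L_1 \); the compatibility condition at such a straight vertex on a \( C^\infty \) arc is met by every function in \( W_p^{1-1/p} \), so nothing changes.

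The substantive point is to identify Grisvard's compatibility conditions in the present range. Since \( p>2 \) we have \( 1-1/p>1/p \), so the one-dimensional Sobolev embedding gives \( W_p^{1-1/p}(L_i)\hookrightarrow C(\overline{L_i}) \); in particular the endpoint values of \( g_1 \) and \( g_2 \) are well defined, which is what makes the hypothesis ``\( g_1 \) and \( g_2 \) have the same values at the endpoints'' meaningful, and moreover \( W_p^1(O)\hookrightarrow C(\overline O) \) so the boundary values of \( G \) are literal restrictions. For \( m=1 \) the only trace datum on each side is \( u|_{\Gamma_i} \) itself (no normal derivatives), and Grisvard's compatibility at a vertex \( S \) between two sides then involves only the coincidence at \( S \) of the tangential derivatives of the two trace data of order strictly below \( 1-2/p \), together with an additional integral (``borderline'') condition precisely when \( 1-2/p\in\Z \). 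For \( 2<p<\infty \) one has \( 0<1-2/p<1 \): only the zeroth-order tangential derivative occurs, and the borderline case is vacuous because \( 1-2/p \) is not an integer. Hence the compatibility condition at each of the two vertices of \( O \) reduces to precisely \( g_1=g_2 \) at that vertex — exactly the hypothesis of the theorem.

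It then remains to assemble the pieces: under the stated hypothesis the pair \( (g_1,g_2) \) lies in the range of the trace operator, so applying Grisvard's right inverse produces \( G\in W_p^1(O) \) with \( G|_{L_i}=g_i \), and the \( p \)-independence of that right inverse is exactly the last sentence of the theorem. I expect the only genuine work to be the faithful transcription of Grisvard's compatibility conditions — stated there with the full bookkeeping of normal-derivative traces and borderline integral conditions for arbitrary order \( m \) and smoothness exponent — into the clean first-order statement above; checking that a two-sided curvilinear polygon is covered by the localization/partition-of-unity argument underlying \cite[Theorem~1.5.2.3]{Grisvard} (or else bypassing this via the artificial subdivision) is a minor secondary point.
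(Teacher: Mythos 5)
Your proposal is correct and takes the same route as the paper: the paper itself presents Theorem~\ref{thm:sobolev} merely as a ``specialization of \cite[Theorem~1.5.2.3]{Grisvard}'' and offers no independent argument. Your verification that, for \( m=1 \) and \( p>2 \), Grisvard's vertex compatibility conditions collapse to the single zeroth-order matching condition \( g_1=g_2 \) at the two corners (with the borderline integral condition vacuous since \( 1-2/p\notin\Z \)) correctly fills in the check the paper leaves implicit, and the artificial-vertex device to accommodate a two-sided curvilinear polygon is a sound way to handle that minor technicality.
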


Let \( \log\rho \) be a continuous determination of the logarithm of \( \rho \) on \( \Delta \). Further, let \( g \) be the polynomial of minimal degree interpolating \( \log\rho \) the points of \( \Delta\setminus\Delta^\circ \). For each subarc \( \Delta_j \) of \( \Delta \) select two analytic subarcs \( \Delta_{j+},\Delta_{j_-} \) that have the same endpoints as \( \Delta_j \) and lie to the left and right of  \( \Delta_j \) (according to the chosen orientation), see Figure~\ref{fig:3}.
\begin{figure}[!ht]
\centering
\includegraphics[scale=1]{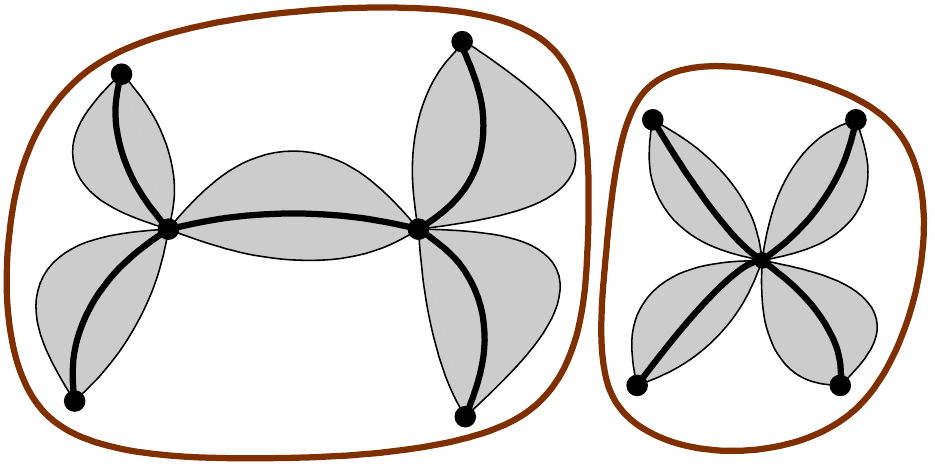}
\begin{picture}(0,0)
\put(-270,100){\( \Gamma \)}
\put(-240,48){\( \Omega_{1+} \)}
\put(-260,90){\( \Omega_{2+} \)}
\put(-195,80){\( \Omega_{3+} \)}
\put(-185,63){\( \Omega_{3-} \)}
\put(-130,90){\( \Omega_{4+} \)}
\put(-133,55){\( \Omega_{5-} \)}
\put(-93,77){\( \Omega_{6-} \)}
\put(-30,77){\( \Omega_{7+} \)}
\put(-32,40){\( \Omega_{8-} \)}
\put(-93,45){\( \Omega_{9+} \)}
\end{picture}
\caption{The system of curves \( \Gamma \) and some of the extension domains \( \Omega_{j\pm} \) (the labeling of the arcs \( \Delta_j \) is as on Figure~\ref{fig:1}). }
\label{fig:3}
\end{figure}
Assume in addition that all the arcs \( \Delta_j,\Delta_{j+},\Delta_{j_-} \) are disjoint and form definite angles at the common endpoints. Denote by \( \Omega_{j\pm} \) the domain delimited by \( \Delta_j \) and \( \Delta_{j\pm} \). Then, according to Theorem~\ref{thm:sobolev}, there exists a function \( G \) such that
\[
G_{|\Delta_j} = \log\rho, \quad G_{|\Delta_{j\pm}} = g, \qandq \bar\partial G \in L^p(\Omega_{j\pm}),
\]
for every \( j \). Then we can extend the function \( \rho \) from \( \Delta \) by
\begin{equation}
\label{extension}
\rho(z) := \left\{
\begin{array}{ll}
e^{G(z)}, & z\in\overline\Omega_{j\pm}, \medskip \\
e^{g(z)}, & \text{otherwise}.
\end{array}
\right.
\end{equation}
Observe further that in this case
\begin{equation}
\label{partial}
\bar\partial(1/\rho) := \left\{
\begin{array}{ll}
-\bar\partial G/\rho, & \text{in }\overline\Omega_{j\pm}, \medskip \\
0, & \text{otherwise}.
\end{array}
\right.
\end{equation}

Now, \( \Gamma \) be a union of simple Jordan curves each encompassing one connected component of \( \Delta \) and chosen so \( \rho \) is holomorphic across \( \Gamma \) if \( \rho \) is a holomorphic function and so that \( \overline\Omega_{j\pm} \) are contained in the interior of \( \Gamma \), say \( \Omega \), see Figure~\ref{fig:3}. Using extension \eqref{extension} when necessary, set
\begin{equation}
\label{eq:x}
\boldsymbol X:= \left\{
\begin{array}{ll}
\boldsymbol Y \left(\begin{matrix} 1 & 0 \smallskip \\ -v_{2n}w / \rho & 1 \end{matrix}\right), & \mbox{in} \quad \Omega, \medskip \\
\boldsymbol Y, & \mbox{in} \quad \C\setminus\overline\Omega.
\end{array}
\right.
\end{equation}
It is trivial to verify that \( \boldsymbol X \) solves the following Riemann-Hilbert-\( \bar\partial \) problem (\rhx):
\begin{itemize}
\label{rhx}
\item[(a)] \( \boldsymbol X \) is continuous in \( \C\setminus(\Delta\cup\Gamma) \) and \( \lim_{z\to\infty} \boldsymbol X(z)z^{-n\sigma_3} = \boldsymbol I \);
\item[(b)] \( \boldsymbol X \) has continuous traces on \( \Delta^\circ\cup\Gamma \) that satisfy
\[
\boldsymbol X_+=\boldsymbol X_- \left\{
\begin{array}{rl}
\displaystyle \left(\begin{matrix} 0 & \rho/(v_{2n}w_+) \smallskip \\ -v_{2n}w_+/\rho & 0 \end{matrix}\right) & \text{on} \quad \Delta^\circ,  \medskip \\
\displaystyle \left(\begin{matrix} 1 & 0 \smallskip \\ v_{2n}w/\rho & 1 \end{matrix}\right) & \text{on} \quad \Gamma;
\end{array}
\right.
\]
\item[(c)] \( \boldsymbol X \) has the behavior near \( e\in \Delta\setminus\Delta^\circ \) described by \hyperref[rhy]{\rhy}(c);
\item[(d)] \( \boldsymbol X \) deviates from an analytic matrix function according to
\[
\bar\partial \boldsymbol X = \boldsymbol X \left(\begin{matrix} 0 & 0 \\ v_{2n}w\bar\partial G/\rho & 0 \end{matrix}\right),
\]
where we extend \( \bar\partial G \) by zero outside of \( \overline\Omega_{j\pm} \), see \eqref{partial}.
\end{itemize}

One can readily verified that the following lemma holds.

\begin{lemma}
\label{lem:rhx}
\hyperref[rhx]{\rhx} is solvable if and only if \hyperref[rhy]{\rhy} is solvable. When solutions of \hyperref[rhx]{\rhx} and \hyperref[rhy]{\rhy} exist, they are unique and connected by \eqref{eq:x}.
\end{lemma}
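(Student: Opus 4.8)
The strategy is to show that the substitution \eqref{eq:x} together with its pointwise inverse sets up a bijection between solutions of \hyperref[rhy]{\rhy} and solutions of \hyperref[rhx]{\rhx}; granting this, all three assertions of the lemma are immediate, the uniqueness being transported from Lemma~\ref{lem:rhy}. The implication ``$\boldsymbol Y$ solves \hyperref[rhy]{\rhy} $\Rightarrow$ $\boldsymbol X$ defined by \eqref{eq:x} solves \hyperref[rhx]{\rhx}'' has already been recorded above as a direct verification, so the real content lies in the converse direction and in uniqueness.

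Thus I would start from a solution $\boldsymbol X$ of \hyperref[rhx]{\rhx} and define $\boldsymbol Y$ by reading \eqref{eq:x} backwards, namely $\boldsymbol Y := \boldsymbol X\left(\begin{smallmatrix} 1 & 0 \\ v_{2n}w/\rho & 1 \end{smallmatrix}\right)$ in $\Omega$ and $\boldsymbol Y := \boldsymbol X$ in $\C\setminus\overline\Omega$, and check the requirements \hyperref[rhy]{\rhy}(a)--(c) one at a time. The only step that is not sheer bookkeeping is analyticity of $\boldsymbol Y$ in $\Omega\setminus\Delta$: computing $\bar\partial\boldsymbol Y = (\bar\partial\boldsymbol X)\left(\begin{smallmatrix} 1 & 0 \\ v_{2n}w/\rho & 1 \end{smallmatrix}\right) + \boldsymbol X\,\bar\partial\!\left(\begin{smallmatrix} 1 & 0 \\ v_{2n}w/\rho & 1 \end{smallmatrix}\right)$ and inserting \hyperref[rhx]{\rhx}(d) together with the identity $\bar\partial(v_{2n}w/\rho) = -\,v_{2n}w\,\bar\partial G/\rho$ coming from \eqref{partial} (valid because $v_{2n}$ and $w$ are holomorphic off $\Delta$), the two contributions cancel identically --- this cancellation is precisely the reason \hyperref[rhx]{\rhx}(d) has the form it does. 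Continuity of $\boldsymbol Y$ across $\Gamma$ is then immediate from the $\Gamma$-part of \hyperref[rhx]{\rhx}(b), so $\boldsymbol Y$ extends analytically across $\Gamma$ and, since $\infty\notin\Omega$, inherits the normalization \hyperref[rhy]{\rhy}(a) from \hyperref[rhx]{\rhx}(a).

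For the jump \hyperref[rhy]{\rhy}(b) I would note that the conjugating factor changes sign across $\Delta^\circ$ because $w_+ = -w_-$ there, so that $\boldsymbol Y_-^{-1}\boldsymbol Y_+$ equals the inverse of the $-$-side value of $\left(\begin{smallmatrix} 1 & 0 \\ v_{2n}w/\rho & 1 \end{smallmatrix}\right)$ times the off-diagonal $\Delta^\circ$-jump of \hyperref[rhx]{\rhx}(b) times its $+$-side value; a one-line $2\times2$ multiplication returns $\left(\begin{smallmatrix} 1 & \rho/(v_{2n}w_+) \\ 0 & 1 \end{smallmatrix}\right)$. The endpoint estimate \hyperref[rhy]{\rhy}(c) is preserved because near $e\in E$ one has $v_{2n}w/\rho = \mathcal O(|z-e|^{1/2})$ (the scheme $\mathcal V$ being separated from $\Delta$), so adding a multiple of the second column of $\boldsymbol X$, which is $\mathcal O(|z-e|^{-1/2})$, to its first column leaves the latter bounded while the second column is untouched; at the remaining points of $\Delta\setminus\Delta^\circ$ the factor $w$ is bounded and nothing degrades. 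This shows $\boldsymbol Y$ solves \hyperref[rhy]{\rhy}; as the two substitutions are manifestly inverse to one another, \hyperref[rhx]{\rhx} is solvable exactly when \hyperref[rhy]{\rhy} is and their solutions are linked by \eqref{eq:x}, while uniqueness for \hyperref[rhx]{\rhx} follows by carrying a hypothetical second solution over to a second solution of \hyperref[rhy]{\rhy}, contradicting Lemma~\ref{lem:rhy}. The only point demanding genuine care is the $\bar\partial$-cancellation in the converse direction; everything else is orientation bookkeeping and elementary matrix algebra.
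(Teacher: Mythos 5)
Your proposal is correct and is, in essence, the routine verification that the paper declares ``can be readily verified'' and does not write out. The forward direction ($\boldsymbol Y\mapsto\boldsymbol X$ via \eqref{eq:x}) is already recorded in the text, and you correctly identify the converse as the only substantive point: writing $\boldsymbol Y = \boldsymbol X\left(\begin{smallmatrix} 1 & 0 \\ v_{2n}w/\rho & 1\end{smallmatrix}\right)$ in $\Omega$, the two contributions to $\bar\partial\boldsymbol Y$ cancel because $\bar\partial(v_{2n}w/\rho)=-v_{2n}w\,\bar\partial G/\rho$ by \eqref{partial}, the $\Gamma$-jump of $\boldsymbol X$ is exactly the one needed to make $\boldsymbol Y$ continuous across $\Gamma$, and the conjugation of the $\Delta^\circ$-jump matrix by the lower-triangular factor (using $w_-=-w_+$) returns the upper-triangular jump of \hyperref[rhy]{\rhy}(b). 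Your endpoint estimate is also correct, though the parenthetical appeal to $\mathcal V$ being separated from $\Delta$ is superfluous there: $v_{2n}w/\rho=\mathcal O(|z-e|^{1/2})$ near $e\in E$ follows simply because $v_{2n}$ is a polynomial, $w$ vanishes like a square root at $e$, and $\rho$ is bounded away from zero, irrespective of where the interpolation nodes sit. One small technical point worth acknowledging is that after establishing $\bar\partial\boldsymbol Y=0$ in the distributional sense for the continuous matrix $\boldsymbol Y$, one invokes Weyl's lemma to conclude holomorphy; this is standard in the $\bar\partial$-steepest-descent machinery and its omission is harmless. Uniqueness transported back and forth through the bijection, as you do, is exactly right, since Lemma~\ref{lem:rhy} supplies uniqueness for \hyperref[rhy]{\rhy}.
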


\subsection{Analytic Approximation}

Below we would like to construct a matrix function that solves \rha:
\begin{itemize}
\label{rha}
\item[(a)] \( \boldsymbol A \) is continuous in \( \C\setminus(\Delta\cup\Gamma) \) and \( \lim_{z\to\infty} \boldsymbol A(z)z^{-n\sigma_3} = \boldsymbol I \);
\item[(b)] \( \boldsymbol A \) has continuous traces on \( \Delta^\circ\cup\Gamma \) that satisfy \hyperref[rhx]{\rhx}(b);
\item[(c)] \( \boldsymbol A \) has the behavior near \( e\in \Delta\setminus\Delta^\circ \) described by \hyperref[rhy]{\rhy}(c).
\end{itemize}

As we shall show later, the jumps of \( \boldsymbol A \) on \( \Gamma \) are asymptotically negligible. Hence, \( \boldsymbol A \) is asymptotically close to a matrix function solving \rhn:
\begin{itemize}
\label{rhn}
\item[(a)] \( \boldsymbol N \) is analytic in \( \C\setminus\Delta \) and \( \lim_{z\to\infty} \boldsymbol N(z)z^{-n\sigma_3} = \boldsymbol I \);
\item[(b)] \( \boldsymbol N \) has continuous traces on \( \Delta^\circ \) that satisfy
\[
\boldsymbol N_+=\boldsymbol N_- \left(\begin{matrix} 0 & \rho/(v_{2n}w_+) \smallskip \\ -v_{2n}w_+/\rho & 0 \end{matrix}\right);
\]
\item[(c)] \( \boldsymbol N \) has the behavior near \( e\in\Delta\setminus\Delta^\circ \) described by \hyperref[rhy]{\rhy}(c).
\end{itemize}

\begin{lemma}
\label{lem:rhn}
For all \( n\in \N_* \) the problem \hyperref[rhn]{\rhn} is solved by
\begin{equation}
\label{eq:n}
\boldsymbol N := \boldsymbol C \boldsymbol M, \quad \boldsymbol C:=\left(\begin{matrix} \gamma_n & 0 \smallskip \\ 0 & \gamma_n^* \end{matrix}\right) \qandq \boldsymbol M :=\left(\begin{matrix} \Psi_n & \Psi_n^*/w \smallskip \\ \Psi_n\Upsilon_n &  \Psi_n^*\Upsilon_n^*/w \end{matrix}\right),
\end{equation}
where the functions \( \Psi_n,\Psi_n^*,\Upsilon_n,\Upsilon_n^*\) are defined by \eqref{Psis} and the constants \( \gamma_n,\gamma_n^* \) by \eqref{gammas}. Moreover, \( \det(\boldsymbol N) \equiv 1 \) in \( \overline \C \).
\end{lemma}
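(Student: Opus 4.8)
The plan is to verify, for every $n\in\N_*$, the defining properties (a)--(c) of \rhn for the matrix $\boldsymbol N=\boldsymbol C\boldsymbol M$ from \eqref{eq:n}, and then the identity $\det\boldsymbol N\equiv1$. For $n\in\N_*$ the problem \eqref{main-jip} is uniquely solvable and $\mathcal D_n$ does not contain $\infty^{(0)}$, so $\Psi_n$ and $\Upsilon_n$ are available from Propositions~\ref{prop:nuttall-szego} and~\ref{prop:secondary}, the constants $\gamma_n,\gamma_n^*$ of \eqref{gammas} are finite and non-zero, and $\gamma_n\gamma_n^*$ is bounded and bounded away from zero by \eqref{gammas-asymp}. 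Since $\boldsymbol C$ is a constant diagonal matrix, it cancels out of every ratio $\boldsymbol N_-^{-1}\boldsymbol N_+$ and does not affect the local behavior of $\boldsymbol N$, so conditions (b) and (c) reduce to the corresponding statements for $\boldsymbol M$, while $\boldsymbol C$ enters only through the normalization at infinity in (a).

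For condition (a) I would first read off the divisors: by \eqref{PsiZeros} the function $\Psi_n$ has, in $\RS\setminus\bd$, a single pole, of order $n$ at $\infty^{(0)}$, and (as in the proof of Proposition~\ref{prop:secondary}) $\Psi_n\Upsilon_n$ has divisor $(n-g-1)\infty^{(1)}+\widetilde{\mathcal D}_n-(n-1)\infty^{(0)}$, so its only pole there is at $\infty^{(0)}$; in particular the poles of $\Upsilon_n$ on $\mathcal D_n$ are cancelled by the zeros of $\Psi_n$. Since $1/w$ is analytic and non-vanishing on $D$, it follows that $\boldsymbol N$ is analytic in $\C\setminus\Delta$. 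The relation $\boldsymbol N(z)z^{-n\sigma_3}\to\boldsymbol I$ is then a direct computation using the local behavior of $\Psi_n$ at $\infty^{(0)},\infty^{(1)}$ and the normalization $w(z)=(1+o(1))z^{g+1}$: the $(1,1)$ and $(2,2)$ entries are normalized precisely by the choices of $\gamma_n$ and $\gamma_n^*$ in \eqref{gammas}, while the $(1,2)$ and $(2,1)$ entries are $\mathcal O(z^{-n-1})$ and $\mathcal O(z^{n-1})$, hence negligible after multiplication by $z^{\mp n}$.

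The main point is condition (b), and I expect the two-sheeted bookkeeping here to be the only genuinely delicate step. Writing $\boldsymbol M$ as in \eqref{eq:n} with the notation of \eqref{Psis}, the two-sided boundary values on $\Delta^\circ$ of the scalar functions $\Psi_n=\Psi_n(\cdot^{(0)})$ and $\Psi_n^*=\Psi_n(\cdot^{(1)})$ are the traces of the single function $\Psi_n$ on $\bd$ from the two complementary components $D^{(0)}$ and $D^{(1)}$, evaluated at the two points of $\bd$ lying over a given $x\in\Delta^\circ$; the same applies to $\Psi_n\Upsilon_n$. Feeding the jump relation \eqref{PsiBoundary} (with $\Psi_{n+},\Psi_{n-}$ the traces from $D^{(0)}$ and $D^{(1)}$) into this dictionary at both points over $x$ produces two scalar identities relating $\Psi_{n,\pm}$ and $\Psi^*_{n,\mp}$ through the factor $\rho/v_{2n}$; combining these with the sign change $w_+=-w_-$ of $w$ across each subarc of $\Delta$ and with the fact that $\Upsilon_n$ and $w$, being meromorphic on all of $\RS$, have no jump on $\bd$, one finds that $\boldsymbol M_+$ is $\boldsymbol M_-$ with its two columns interchanged and rescaled by $\rho/(v_{2n}w_+)$ and $-v_{2n}w_+/\rho$; that is, $\boldsymbol M_+=\boldsymbol M_-\left(\begin{smallmatrix}0 & \rho/(v_{2n}w_+)\\ -v_{2n}w_+/\rho & 0\end{smallmatrix}\right)$. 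Because $\boldsymbol C$ commutes out, the same jump holds for $\boldsymbol N$, which is (b); the continuity of the traces is inherited from Propositions~\ref{prop:nuttall-szego} and~\ref{prop:secondary} together with $w$ being continuous and non-zero on $\Delta^\circ$.

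For condition (c): near a ramification point $e\in E$ the local parameter on $\RS$ is $(z-e)^{1/2}$, so $\Psi_n$ and $\Psi_n\Upsilon_n$ are holomorphic there (with a zero at worst) and hence $\mathcal O(1)$ as functions of $z$, whereas $1/w=\mathcal O(|z-e|^{-1/2})$, which gives exactly the bound in \rhy(c); near a point $e\in\Delta\setminus\Delta^\circ$ with $e\notin E$ all four entries are bounded since $\Psi_n$ and $\Psi_n\Upsilon_n$ are finite at the points of $\bd$ over $e$ while $w$ is bounded and non-zero. Finally, $\det\boldsymbol N=\gamma_n\gamma_n^*\det\boldsymbol M$ with $\det\boldsymbol M=\bigl(\Psi_n(z^{(0)})\Psi_n(z^{(1)})/w(z)\bigr)\bigl(\Upsilon_n(z^{(1)})-\Upsilon_n(z^{(0)})\bigr)$; the jump matrix in (b) has determinant $1$, so $\det\boldsymbol N$ extends continuously across $\Delta^\circ$, and by the bounds just listed it grows no faster than $|z-e|^{-1/2}$ (or is bounded) at the remaining points of $\Delta$, so those isolated singularities are removable. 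Hence $\det\boldsymbol N$ is an entire function; by (a) and $\det z^{-n\sigma_3}\equiv1$ it tends to $1$ at infinity, whence $\det\boldsymbol N\equiv1$ by Liouville's theorem.
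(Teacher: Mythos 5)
Your proof is correct and matches the paper's (very terse) argument, just with the details spelled out: (a) from the divisors in \eqref{PsiZeros} and Proposition~\ref{prop:secondary} together with the normalizations \eqref{gammas}, (b) from \eqref{PsiBoundary} via the sheet-swapping dictionary, (c) from boundedness of the traces and $w$, and the determinant via Liouville exactly as in Lemma~\ref{lem:rhy}. One small imprecision in (c): near $\e\in\boldsymbol E$ the function $\Psi_n$ is not actually holomorphic in the local coordinate $(z-e)^{1/2}$ (it still has the jump \eqref{PsiBoundary} across $\bd$ there); the correct reason for the $\mathcal O(1)$ bound on the first column is simply that the traces of $\Psi_n$ and $\Psi_n\Upsilon_n$ on $\bd$ are continuous and hence bounded, which is what the paper invokes and what you effectively use anyway.
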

\begin{proof}
\hyperref[rhn]{\rhn}(a) follows immediately from the analyticity properties of the functions \( \Psi_n,\Psi_n^*,\Upsilon_n,\Upsilon_n^*\) and the very way the constants \( \gamma_n,\gamma_n^* \) were defined. \hyperref[rhn]{\rhn}(b) can be easily checked by using \eqref{PsiBoundary}. Finally, \hyperref[rhn]{\rhn}(c) is the consequences of the boundedness of the traces of \( \Psi_n,\Psi_n^*,\Upsilon_n,\Upsilon_n^*\) on \( \Delta \) and the definition of \( w \). The identity \( \det(\boldsymbol N) \equiv 1 \) can be shown as in the proof of Lemma~\ref{lem:rhy}.
\end{proof}

To deal with the jump of \( \boldsymbol A \) on \( \Gamma \), we need a matrix function solving \rhr:
\begin{itemize}
\label{rhr}
\item[(a)] \( \boldsymbol Z \) is a holomorphic matrix function in \( \overline\C\setminus\Gamma \) and \( \boldsymbol Z(\infty)=\boldsymbol I \);
\item[(b)] \( \boldsymbol Z \) has continuous traces on \( \Gamma \) that satisfy
\[
\boldsymbol Z_+  = \boldsymbol Z_- \boldsymbol M \left(\begin{matrix} 1 & 0 \\ v_{2n}w/\rho & 1 \end{matrix}\right) \boldsymbol M^{-1}.
\]
\end{itemize}
Then the following lemma takes place.
\begin{lemma}
\label{lem:rhr}
The solution of \hyperref[rhr]{\rhr} exists for all \( n\in\N_* \) large enough and satisfies
\begin{equation}
\label{eq:r}
\boldsymbol Z=\boldsymbol I+o(1)
\end{equation}
uniformly in \( \overline\C \).
\end{lemma}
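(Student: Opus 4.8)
The plan is to verify that the jump matrix in \rhr{}(b) is uniformly close to the identity on \( \Gamma \) as \( \N_*\ni n\to\infty \), and then to invoke the classical small-norm theory of Riemann--Hilbert problems to obtain both solvability and the estimate \eqref{eq:r}. First I would make the jump explicit. Writing the lower-triangular factor as \( \boldsymbol I \) plus \( \frac{v_{2n}w}{\rho} \) times the nilpotent matrix \( \boldsymbol E \) having a single \( 1 \) in the \( (2,1) \) entry, the jump relation in \rhr{}(b) reads \( \boldsymbol Z_+=\boldsymbol Z_-(\boldsymbol I+\boldsymbol\Delta_n) \) with \( \boldsymbol\Delta_n:=\frac{v_{2n}w}{\rho}\,\boldsymbol M\,\boldsymbol E\,\boldsymbol M^{-1} \). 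Using the explicit form of \( \boldsymbol M \) in \eqref{eq:n} together with \( \det\boldsymbol M=(\gamma_n\gamma_n^*)^{-1} \) (which follows from \( \det\boldsymbol N\equiv1 \) in Lemma~\ref{lem:rhn} and \( \boldsymbol N=\boldsymbol C\boldsymbol M \)), a one-line computation shows that every entry of \( \boldsymbol\Delta_n \) is a constant multiple of
\[
\gamma_n\gamma_n^*\,\frac{v_{2n}(z)}{\rho(z)\,w(z)}\,\cdot\,\xi_n(z),\qquad \xi_n(z)\in\Big\{\big(\Psi_n^*(z)\big)^{2},\ \big(\Psi_n^*(z)\big)^{2}\Upsilon_n^*(z),\ \big(\Psi_n^*(z)\Upsilon_n^*(z)\big)^{2}\Big\},
\]
for \( z\in\Gamma \), where \( \Psi_n^*(z)=\Psi_n\big(z^{(1)}\big) \) and \( \Psi_n^*(z)\Upsilon_n^*(z)=(\Psi_n\Upsilon_n)\big(z^{(1)}\big) \), see \eqref{Psis}.

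The heart of the matter is the estimate of these quantities on \( \Gamma \). Since \( \Gamma \) is a fixed finite union of smooth Jordan curves lying in \( D \), the function \( w \) is holomorphic and non-vanishing near \( \Gamma \) (as \( \Gamma\cap\Delta=\varnothing \)), and on \( \Gamma \) the extended density \( \rho \) equals \( e^{g} \) for a polynomial \( g \), hence is holomorphic and non-vanishing there; consequently \( |w/\rho|\le c_\Gamma \) on \( \Gamma \) for a constant \( c_\Gamma \) independent of \( n \). Proposition~\ref{prop:estimates} then provides a normalisation of \( \Psi_n,\Upsilon_n \) for which
\[
\big|\Psi_n^*(z)\big|^{2},\ \big|(\Psi_n\Upsilon_n)\big(z^{(1)}\big)\big|^{2}\ \le\ C\exp\bigg\{\sum_{i=0}^{2n}g\big(z^{(1)},v_{2n,i}^{(0)}\big)\bigg\}\left|\frac{w^{2}(z)}{v_{2n}(z)}\right|,\qquad z\in\Gamma,\ n\in\N_*,
\]
and the product \( |\Psi_n^*(z)|\,|(\Psi_n\Upsilon_n)(z^{(1)})| \) obeys the same bound, being the product of the square roots of the two individual bounds. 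Substituting into the formula for \( \boldsymbol\Delta_n \), the growing powers of \( v_{2n} \) cancel against \( |w^2/v_{2n}| \) down to a single power of \( |w| \), which together with the \( |v_{2n}w/\rho| \) prefactor leaves only \( |w/\rho|\le c_\Gamma \), so
\[
\big\|\boldsymbol\Delta_n\big\|_{L^\infty(\Gamma)}\ \le\ C\,c_\Gamma\,\big|\gamma_n\gamma_n^*\big|\,\sup_{z\in\Gamma}\exp\bigg\{\sum_{i=0}^{2n}g\big(z^{(1)},v_{2n,i}^{(0)}\big)\bigg\}.
\]
Here \( |\gamma_n\gamma_n^*| \) stays bounded by a constant depending only on \( \N_* \) by \eqref{gammas-asymp}, while \( \{z^{(1)}:z\in\Gamma\} \) is a compact subset of \( D^{(1)} \), so the supremum tends to \( 0 \) as \( \N_*\ni n\to\infty \) by Definition~\ref{def:symmetry}(ii). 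Hence \( \varepsilon_n:=\|\boldsymbol\Delta_n\|_{L^\infty(\Gamma)}\to0 \), and, \( \Gamma \) having finite length, also \( \|\boldsymbol\Delta_n\|_{L^2(\Gamma)}\to0 \).

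It then remains to run the standard argument. The matrix \( \boldsymbol I+\boldsymbol\Delta_n \) has determinant \( 1 \), so for any solution \( \boldsymbol Z \) of \rhr{} the function \( \det\boldsymbol Z \) extends to an entire function equal to \( 1 \) at infinity, whence \( \det\boldsymbol Z\equiv1 \) and the solution is unique whenever it exists. Because \( \Gamma \) is a finite union of smooth closed curves, the Cauchy projection \( C_- \) is bounded on \( L^2(\Gamma) \), and \rhr{} is equivalent to the singular integral equation \( (\mathrm{Id}-C_{\boldsymbol\Delta_n})\boldsymbol\mu=\boldsymbol I \) on \( L^2(\Gamma) \) with \( C_{\boldsymbol\Delta_n}\boldsymbol f:=C_-(\boldsymbol f\,\boldsymbol\Delta_n) \); for \( n \) large enough \( \|C_{\boldsymbol\Delta_n}\|\le\|C_-\|\,\varepsilon_n<1 \), so \( \mathrm{Id}-C_{\boldsymbol\Delta_n} \) is invertible by a Neumann series, \( \boldsymbol\mu=\boldsymbol I+O(\varepsilon_n) \) in \( L^2(\Gamma) \), and
\[
\boldsymbol Z(z)=\boldsymbol I+\frac{1}{2\pi\ic}\int_\Gamma\frac{\boldsymbol\mu(s)\,\boldsymbol\Delta_n(s)}{s-z}\,\dd s,
\]
from which a Cauchy--Schwarz estimate (using \( \|\boldsymbol\Delta_n\|_{L^2} \) off \( \Gamma \) and the \( L^\infty \) bound near \( \Gamma \)) yields \( \boldsymbol Z=\boldsymbol I+O(\varepsilon_n) \) uniformly in \( \overline\C \), i.e. \eqref{eq:r}.

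I expect the jump estimate to be the only genuine difficulty; everything afterwards is routine Riemann--Hilbert bookkeeping. The decisive point is the exact cancellation of \( n \)-dependent factors: the factor \( v_{2n}w/\rho \) from \rhr{}(b), the squares of the Nuttall--Szeg\H{o} functions whose moduli carry the leading factor \( |w^2/v_{2n}| \) in \eqref{maxPsi}, and the normalising constants \( \gamma_n\gamma_n^* \) held away from \( 0 \) and \( \infty \) by \eqref{gammas-asymp} combine so that the growing powers of \( v_{2n} \) and \( w \) disappear completely, leaving precisely \( \exp\{\sum_i g(z^{(1)},v_{2n,i}^{(0)})\} \) — a quantity that decays on the compact \( \Gamma \) by the very definition of symmetry in Definition~\ref{def:symmetry}(ii).
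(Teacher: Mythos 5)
Your proof is correct and follows essentially the same route as the paper: compute the conjugated jump matrix explicitly, use $\det\boldsymbol M=(\gamma_n\gamma_n^*)^{-1}$ to extract the prefactor $\gamma_n\gamma_n^*\,\tfrac{v_{2n}}{\rho w}$, cancel the $v_{2n}$ growth against the $|w^2/v_{2n}|$ factor in \eqref{maxPsi}, and then let Definition~\ref{def:symmetry}(ii) kill the remaining exponential on the fixed compact $\Gamma$. The only cosmetic difference is that where you spell out the $L^2(\Gamma)$ Cauchy-projection / Neumann-series mechanism for the small-norm conclusion, the paper simply cites \cite[Corollary~7.108]{Deift}; the two are the same argument.
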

\begin{proof}
Since \( \det(\boldsymbol N)\equiv1 \) and therefore \( \det(\boldsymbol M)\equiv1/(\gamma_n\gamma_n^*) \), the jump matrix for \( \boldsymbol Z \) is equal to
\begin{equation}
\label{smalljump}
\boldsymbol I + \gamma_n\gamma_n^*\frac{v_{2n}}{\rho w}\big(\Psi_n^*\big)^2 \left(\begin{matrix} \Upsilon_n^* & -1 \smallskip \\ \big(\Upsilon_n^*\big)^2 &  -\Upsilon_n^* \end{matrix}\right) = \boldsymbol I + o(1),
\end{equation}
where the last equality follows from \eqref{gammas-asymp}, \eqref{maxPsi}, and Definition~\ref{def:symmetry}(ii). It was shown in \cite[Corollary~7.108]{Deift} that \eqref{smalljump} implies solvability \hyperref[rhr]{\rhr} for all \( n\in\N_* \) large enough as well as estimate \eqref{eq:r}.
\end{proof}

The verification of the following lemma is rather trivial.

\begin{lemma}
\label{lem:rha}
Let \( \boldsymbol N=\boldsymbol{CM} \) be  the solution of \hyperref[rhn]{\rhn} granted by Lemma~\ref{lem:rhn} and \( \boldsymbol Z \) be the solution of \hyperref[rhr]{\rhr} granted by Lemma~\ref{lem:rhr}. Then it can be easily checked that \( \boldsymbol A:=\boldsymbol{CZM} \) solves \hyperref[rha]{\rha}.
\end{lemma}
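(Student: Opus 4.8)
The plan is to verify directly that $\boldsymbol A:=\boldsymbol{CZM}$ satisfies properties (a)--(c) of \rha, exploiting throughout that the three factors are arranged so that each is analytic across the contour on which another of them jumps. First I would record the basic analyticity data: $\boldsymbol C$ is a constant invertible matrix, $\boldsymbol M$ is holomorphic in $\overline\C\setminus\Delta$ by the analyticity properties of $\Psi_n,\Psi_n^*,\Upsilon_n,\Upsilon_n^*$ from Section~\ref{sec:NS}, and $\boldsymbol Z$ is holomorphic in $\overline\C\setminus\Gamma$ by Lemma~\ref{lem:rhr}. Since $\Gamma$ is chosen to surround $\Delta$ (so that $\Gamma\cap\Delta=\varnothing$ and $\boldsymbol Z$ is in fact holomorphic in a full neighbourhood of $\Delta$), the product $\boldsymbol{CZM}$ is holomorphic, hence continuous, in $\C\setminus(\Delta\cup\Gamma)$, which is \rha(a) away from infinity. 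For the normalization at $\infty$ I would \emph{not} try to commute $\boldsymbol Z$ through $z^{-n\sigma_3}$, since $\boldsymbol Z$ is not diagonal; instead I would write
\[
\boldsymbol A(z)z^{-n\sigma_3}=\big(\boldsymbol C\boldsymbol Z(z)\boldsymbol C^{-1}\big)\,\big(\boldsymbol N(z)z^{-n\sigma_3}\big),
\]
and use $\boldsymbol Z(\infty)=\boldsymbol I$ (so $\boldsymbol C\boldsymbol Z\boldsymbol C^{-1}\to\boldsymbol I$) together with \rhn(a) to conclude the limit is $\boldsymbol I$, completing \rha(a).

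\textbf{Jumps.} On $\Delta^\circ$ the factor $\boldsymbol Z$ is holomorphic, so $\boldsymbol A_\pm=\boldsymbol C\boldsymbol Z\boldsymbol M_\pm$ there; cancelling the constant $\boldsymbol C$ in \rhn(b) gives $\boldsymbol M_+=\boldsymbol M_-J_\Delta$ with $J_\Delta=\left(\begin{smallmatrix} 0 & \rho/(v_{2n}w_+)\\ -v_{2n}w_+/\rho & 0\end{smallmatrix}\right)$ the jump matrix of \rhx(b) on $\Delta^\circ$, hence $\boldsymbol A_+=\boldsymbol C\boldsymbol Z\boldsymbol M_-J_\Delta=\boldsymbol A_-J_\Delta$. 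On $\Gamma$ the factor $\boldsymbol{CM}$ is holomorphic, so $\boldsymbol A_\pm=\boldsymbol C\boldsymbol Z_\pm\boldsymbol M$; substituting the jump relation $\boldsymbol Z_+=\boldsymbol Z_-\boldsymbol M J_\Gamma\boldsymbol M^{-1}$ from \rhr(b) (with $J_\Gamma=\left(\begin{smallmatrix}1&0\\ v_{2n}w/\rho&1\end{smallmatrix}\right)$) telescopes to $\boldsymbol A_+=\boldsymbol C\boldsymbol Z_-\boldsymbol M J_\Gamma=\boldsymbol A_-J_\Gamma$. Together these are exactly the two lines of \rhx(b), i.e.\ \rha(b), and continuity of the traces on $\Delta^\circ\cup\Gamma$ is inherited from that of $\boldsymbol M$ on $\Delta^\circ$ and of $\boldsymbol Z$ on $\Gamma$. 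Finally, near a point $e\in\Delta\setminus\Delta^\circ$ the factor $\boldsymbol{CZ}$ is bounded (holomorphic there), so each column of $\boldsymbol A=(\boldsymbol{CZ})\boldsymbol M$ inherits the bounds on the corresponding column of $\boldsymbol M$, which by \rhn(c) are precisely those of \rhy(c); this gives \rha(c).

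The computation is entirely routine, as the statement of the lemma already indicates. The only points that need a moment's care — and which I would make explicit — are the disjointness $\Gamma\cap\Delta=\varnothing$, which is what licenses treating each factor as analytic across the other factor's contour, and the conjugation by $\boldsymbol C$ in the normalization at infinity, which is forced because $\boldsymbol Z$ is not diagonal. I do not anticipate any genuine obstacle.
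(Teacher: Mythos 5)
Your verification is correct and is precisely the routine factor-by-factor check that the paper alludes to when it calls the lemma "rather trivial" and "easily checked" without writing out a proof. The three points you flag — $\Gamma\cap\Delta=\varnothing$ so that each factor is analytic across the other's jump contour, the conjugation $\boldsymbol C\boldsymbol Z\boldsymbol C^{-1}$ in handling the normalization at infinity (since $\boldsymbol Z$ is not diagonal), and the column-wise bound near $e\in E$ being preserved under left multiplication by the bounded holomorphic factor $\boldsymbol{CZ}$ — are exactly the small observations needed, and your telescoping computation on $\Gamma$ using $\boldsymbol Z_+=\boldsymbol Z_-\boldsymbol M J_\Gamma\boldsymbol M^{-1}$ is the intended one.
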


\subsection{\( \bar\partial \) Problem}

In this section we are looking for a solution of the following \( \bar\partial \)-problem (\pbd):
\begin{itemize}
\label{pbd}
\item[(a)] \( \boldsymbol D \) is a continuous matrix function in \( \overline\C \) and \( \boldsymbol D(\infty)=\boldsymbol I \);
\item[(b)] \( \boldsymbol D \) deviates from an analytic matrix function according to \( \bar\partial \boldsymbol D = \boldsymbol {DW} \), where 
\[
\boldsymbol W := \boldsymbol {ZM} \left(\begin{matrix} 0 & 0 \\ v_{2n}w\bar\partial G/\rho & 0 \end{matrix}\right)\boldsymbol M^{-1}\boldsymbol Z^{-1},
\]
\( \boldsymbol Z \) is the solution of \hyperref[rhr]{\rhr}, and \( \boldsymbol M \) is defined in \eqref{eq:n}.
\end{itemize}

Then the following lemma holds.

\begin{lemma}
\label{lem:pbd}
The solution of \hyperref[pbd]{\pbd} exists for all \( n\in\N_* \) large enough and satisfies
\begin{equation}
\label{eq:d}
\boldsymbol D=\boldsymbol I+o(1)
\end{equation}
locally uniformly in \( D \).
\end{lemma}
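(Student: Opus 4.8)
The plan is to recast \hyperref[pbd]{\pbd} as a fixed point problem for the solid Cauchy transform and solve it by a Neumann series, the smallness needed coming from the estimate \eqref{maxPsi} and Definition~\ref{def:symmetry}(ii). \emph{Step 1: reduction to an integral equation.} Note that $\boldsymbol W$ vanishes off the compact set $K:=\bigcup_j\big(\overline\Omega_{j+}\cup\overline\Omega_{j-}\big)$. Let $\mathcal C[h](z):=-\frac1\pi\iint_{\C}\frac{h(\xi)}{\xi-z}\,\dd m(\xi)$, $m$ the planar Lebesgue measure, denote the solid Cauchy transform; recall that $\bar\partial\,\mathcal C[h]=h$, that $\mathcal C[h](\infty)=0$, and that for $h\in L^{p'}(K)$ with $p'>2$ the function $\mathcal C[h]$ is H\"older continuous on $\C$ with $\|\mathcal C[h]\|_{L^\infty(\C)}\le C_{p'}\|h\|_{L^{p'}(K)}$ (see \cite{McLM08,BY10}). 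A Liouville argument then shows that $\boldsymbol D$ solves \hyperref[pbd]{\pbd} if and only if $\boldsymbol D=\boldsymbol I+\mathcal C[\boldsymbol D\boldsymbol W]$; conversely, once $\boldsymbol W\in L^{p'}(K)$ for some $p'>2$ (checked next), any $\boldsymbol D\in L^\infty(\C)$ satisfying this identity is automatically continuous on $\overline\C$ with $\boldsymbol D(\infty)=\boldsymbol I$.

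\emph{Step 2: smallness of $\boldsymbol W$.} A direct computation using $\det\boldsymbol M\equiv1/(\gamma_n\gamma_n^*)$, exactly as in the derivation of \eqref{smalljump}, gives
\[
\boldsymbol M\begin{pmatrix}0&0\\ v_{2n}w\,\bar\partial G/\rho & 0\end{pmatrix}\boldsymbol M^{-1}=\gamma_n\gamma_n^*\,\frac{v_{2n}\,\bar\partial G}{\rho\, w}\,\big(\Psi_n^*\big)^2\begin{pmatrix}\Upsilon_n^* & -1\\ (\Upsilon_n^*)^2 & -\Upsilon_n^*\end{pmatrix},
\]
so $\boldsymbol W$ is the $\boldsymbol Z$-conjugate of the right-hand side, where $\boldsymbol Z^{\pm1}$ are uniformly bounded by Lemma~\ref{lem:rhr} and $1/\rho$ is bounded since $\rho$ is continuous and non-vanishing. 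Away from the endpoints $E$, the estimate \eqref{maxPsi} — which remains valid up to $\bd$ with a constant uniform in $n$, as its proof only uses bounds (cf.\ \eqref{explicit}--\eqref{explicit3}) that hold up to $\bd$ — together with \eqref{gammas-asymp} and Cauchy--Schwarz bounds each entry of $(\Psi_n^*)^2\big(\begin{smallmatrix}\Upsilon_n^* & -1\\ (\Upsilon_n^*)^2 & -\Upsilon_n^*\end{smallmatrix}\big)$ by $C\,E_n(z)\,|w^2/v_{2n}|$, where $E_n(z):=\exp\big\{\sum_{i=0}^{2n}g\big(z^{(1)},v_{2n,i}^{(0)}\big)\big\}$; hence $|\boldsymbol W(z)|\le C\,E_n(z)\,|\bar\partial G(z)|$ there. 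Near each $e\in E$, \hyperref[rhy]{\rhy}(c) for $\boldsymbol N$ (inherited by $\boldsymbol M^{\pm1}$) together with $w(z)=\mathcal O(|z-e|^{1/2})$ gives instead $|\boldsymbol W(z)|\le C\,|z-e|^{-1/2}\,|\bar\partial G(z)|$, while $E_n(z)$ stays bounded above and below uniformly in $n$. Thus, with a fixed weight $w_0\ge1$ on $K$ equal to $|z-e|^{-1/2}$ near each $e\in E$ (so $w_0\in L^s(K)$ for every $s<4$),
\[
|\boldsymbol W(z)|\le C\,w_0(z)\,E_n(z)\,|\bar\partial G(z)|,\qquad z\in K,
\]
with $C$ independent of $n$. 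Now $z\mapsto\sum_{i}g\big(z^{(1)},v_{2n,i}^{(0)}\big)$ is subharmonic on $D^{(1)}$, continuous up to $\bd$, bounded above on $\bd$ uniformly in $n$ by Definition~\ref{def:symmetry}(ii) (and bounded above at $\infty^{(1)}$), so the maximum principle gives $\sup_{n}\sup_{K}E_n<\infty$; by the same part of Definition~\ref{def:symmetry} these sums tend to $-\infty$ at every point of $D^{(1)}$, hence $E_n\to0$ $m$-a.e.\ on $K$. Since $\log\rho\in W^{1-1/p}_p$ with $p>4$ we have $\bar\partial G\in L^p(\Omega_{j\pm})$, so we may fix $s<4$ and $p'>2$ with $1/p'=1/p+1/s$; H\"older's inequality gives $\|\boldsymbol W\|_{L^{p'}(K)}\le C\,\|w_0\|_{L^s(K)}\,\|E_n\,\bar\partial G\|_{L^p(K)}$, and the last factor tends to $0$ by dominated convergence. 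Thus $\|\boldsymbol W\|_{L^{p'}(K)}\to0$ as $\N_*\ni n\to\infty$.

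\emph{Step 3: contraction and conclusion.} For $\boldsymbol B\in L^\infty(\C)$, H\"older's inequality with conjugate exponents $p'$ and $q=p'/(p'-1)<2$ yields
\[
\big\|\mathcal C[\boldsymbol B\boldsymbol W]\big\|_{L^\infty(\C)}\le\frac1\pi\,\|\boldsymbol B\|_{L^\infty}\,\|\boldsymbol W\|_{L^{p'}(K)}\,\sup_{z\in\C}\Big(\iint_K\frac{\dd m(\xi)}{|\xi-z|^{q}}\Big)^{1/q}=C\,\|\boldsymbol W\|_{L^{p'}(K)}\,\|\boldsymbol B\|_{L^\infty},
\]
the supremum being finite because $K$ is bounded and $q<2$. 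By Step 2 the operator $\boldsymbol B\mapsto\mathcal C[\boldsymbol B\boldsymbol W]$ has $L^\infty(\C)$-norm $<1$ for all $n\in\N_*$ large enough; for such $n$ the identity of Step 1 has the unique solution $\boldsymbol D=(\boldsymbol I-\mathcal C[\,\cdot\,\boldsymbol W])^{-1}[\boldsymbol I]\in L^\infty(\C)$, which by Step 1 is the (unique) solution of \hyperref[pbd]{\pbd}, and it satisfies $\|\boldsymbol D-\boldsymbol I\|_{L^\infty(\C)}=\|\mathcal C[\boldsymbol D\boldsymbol W]\|_{L^\infty}\le C\,\|\boldsymbol W\|_{L^{p'}(K)}\,\|\boldsymbol D\|_{L^\infty}\to0$, which is \eqref{eq:d} (in fact uniformly on $\overline\C$).

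The genuinely delicate point is Step 2: one must upgrade \eqref{maxPsi} to hold up to $\bd$ with a constant uniform in $n$, handle the endpoint singularities (this is exactly where the hypothesis $p>4$ is used, so as to obtain $\boldsymbol W\in L^{p'}$ for some $p'>2$), and extract from Definition~\ref{def:symmetry}(ii) simultaneously a uniform-in-$n$ upper bound and pointwise-a.e.\ decay of $E_n$ on the lens-shaped set $K$. Steps 1 and 3 are the standard $\bar\partial$-problem and contraction-mapping machinery.
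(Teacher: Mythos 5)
Your proof follows the same strategy as the paper's: reformulate \hyperref[pbd]{\pbd} as a fixed-point equation for the solid Cauchy transform, solve it by a Neumann series, and reduce the smallness of the resulting operator to $L^{p'}$-smallness of $\boldsymbol W$ via H\"older's inequality. Steps~1 and~3 are essentially identical to the paper's argument. In Step~2 the paper organizes the final decay estimate through a two-scale (inner/outer) decomposition of the lens region, while you use dominated convergence; these are morally equivalent, and the dominated-convergence phrasing is arguably cleaner.

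There is, however, a gap in your endpoint analysis near $e\in E$. You invoke \hyperref[rhy]{\rhy}(c) for $\boldsymbol N$ (hence $\boldsymbol M^{\pm1}$) to get $|\boldsymbol W(z)|\le C|z-e|^{-1/2}|\bar\partial G(z)|$, but the implicit constant in that $\mathcal O$-estimate is a priori $n$-dependent and you do not justify its uniformity. More seriously, to fold this into the desired bound $|\boldsymbol W|\le C\,w_0E_n|\bar\partial G|$ you claim that $E_n$ is bounded below uniformly in $n$ near $e$. That is not implied by Definition~\ref{def:symmetry}(ii): only boundedness on $\bd$ and decay in the interior are assumed, and if interpolation points cluster toward $\Delta$ the sums $\sum_ig(\cdot,v_{2n,i}^{(0)})$ need not be bounded below near $\bd$ (subharmonicity gives no lower bound). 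The cleaner route, which is what the paper uses, is to observe that the constituents of $\Psi_n$ obey $n$-uniform bounds up to $\bd$: \eqref{explicit} is an identity on all of $D$, \eqref{explicit2} holds on $\RS_{\ualpha,\ubeta}$, and the remark after \eqref{explicit3} gives $|\Theta_n|\le C(\N_*)$ uniformly on closed subsets of $\RS\setminus\{\infty^{(1)}\}$ (without the $1/w$ factor). Combining these yields the $n$-uniform inequality $|\Psi_n(z^{(1)})|^2\le C\,E_n(z)/|v_{2n}(z)|$ up to $\Delta$, and substituting into the formula for $\boldsymbol W$ gives $|\boldsymbol W|\le C\,E_n\,|\bar\partial G|/|w|\le C\,w_0\,E_n\,|\bar\partial G|$ near $e$ directly, with no need for a lower bound on $E_n$ or for an appeal to \hyperref[rhy]{\rhy}(c). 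With this correction your Step~2 closes and the proof is sound.
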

\begin{proof}
Let \( O \) be an open set and \( \phi\in L^p(O) \). Define the Cauchy area integral of \( \phi \)  by
\[
\mathcal K\phi(z) := \frac1{2\pi\mathrm i}\iint_O \frac{\phi(s)}{s-z}\dd s \wedge \dd\bar s, \quad z\in O.
\]
It is known that \( \bar\partial \mathcal K\phi = \phi \), see \cite[Section~4.9]{AstalaIwaniecMartin}. Moreover, when \( p>2 \), \( \mathcal K \) is a bounded operator from \( L^p(O) \) into \( C^{1-2/p}(\overline O) \), the space of Lipschitz continuous functions in \( \overline O \) with exponent \( 1-2/p \), see \cite[Theorem~4.3.13]{AstalaIwaniecMartin}.  In fact, since we clearly can take \( z\not\in O \) in the definition of \(\mathcal K\phi \), it is well defined in the entire extended complex plane, is holomorphic outside of \( \overline O \), and is vanishing at infinity. Furthermore, since an extension of \( \phi \) by zero to any open set containing \( O \) is still in \( L^p \) of that set, \( \mathcal K\phi \) is necessarily H\"older continuous across \( \partial O \).

Let now \( O \) be such that \( \overline\Omega_{j\pm}\subset O \) and \( \overline O \subset \Omega \). Assume that there exists a bounded matrix function \( \boldsymbol D \) such that
\begin{equation}
\label{in-eq}
\boldsymbol I = (\mathcal I - \mathcal K_{\boldsymbol W})\boldsymbol D,
\end{equation}
where \( \mathcal I \) is the identity operator and \( \mathcal K_{\boldsymbol W}\boldsymbol D := \mathcal K(\boldsymbol{DW})\). Then properties of the Cauchy integral operator imply that this \( \boldsymbol D \) solves \hyperref[pbd]{\pbd}. 

As far as the solvability of \eqref{in-eq} is concerned, if \( \| \mathcal K_{\boldsymbol W} \|<1 \), where we consider \( \mathcal K_{\boldsymbol W} \) as an operator from the space of bounded matrix functions into itself, then \( (\mathcal I - \mathcal K_{\boldsymbol W})^{-1} \) exists as a Neumann series and
\[
\boldsymbol D = \boldsymbol I + \mathcal O\left(\frac{\| \mathcal K_{\boldsymbol W} \|}{1-\| \mathcal K_{\boldsymbol W} \|}\right).
\]
Moreover, \( \mathcal D \) satisfies \eqref{eq:d} if \( \| \mathcal K_{\boldsymbol W} \|=o(1) \). Hence, it only remains to prove this estimate. It holds that
\[
\| \mathcal K_{\boldsymbol W} \| \leq C\max_{i,j}\max_{z\in\overline O}\left\|\frac{[\boldsymbol W]_{ij}}{z-\cdot}\right\|_1
\]
for some absolute constants \( C \), where \( \|\cdot\|_q \) is the \( L^q(O) \)-norm. By the very definition, it holds that
\[
\boldsymbol W = \gamma_n\gamma_n^*\bar\partial G\frac{v_{2n}}{\rho w}\big(\Psi_n^*\big)^2 \boldsymbol Z\left(\begin{matrix} \Upsilon_n^* & -1 \smallskip \\ \big(\Upsilon_n^*\big)^2 &  -\Upsilon_n^* \end{matrix}\right)\boldsymbol Z^{-1}.
\]
Using \eqref{second-est}, \eqref{second-est2}, \eqref{gammas-asymp}, and \eqref{eq:r}, we get that
\[
\| \mathcal K_{\boldsymbol W} \| \leq C_1(\N_*)\max_{z\in\overline O}\left\|\frac{\Phi_n^*\bar\partial G/w}{z-\cdot}\right\|_1 \leq C(\N_*,O) \|\Phi_n\|_q
\]
for any \( q\in\big(\frac{4p}{p-4},\infty\big) \), where \( \Phi_n^*(z)=\Phi_n\big(z^{(1)}\big)\) and the second inequality follows by repeated application of H\"older inequality (recall that \( \bar\partial G\in L^p(O) \) and \( p>4 \)). Let \( \Gamma_n \subset O \) be a union of simple Jordan curves each encompassing one connected component of \( \Delta \). Denote by \( O_n \) the union of the bounded components of the complement of \( \Gamma_n \). Assume further that \(|O_n|\to0 \) as \( n\to\infty \), where \( |O_n| \) is the planar Lebesgue measure of \( O_n \). Then
\[
\|\Phi_n^*\|_q^q \leq \|\Phi_n^* \|_\Delta^q|O_n| + \|\Phi_n^* \|_{\Gamma_n}^q|O\setminus O_n| = o(1)
\]
by Definition~\ref{def:symmetry}(ii) as desired, where \( \|\cdot\|_K \) is the supremum norm of \( K \).
\end{proof}

\subsection{Asymptotics}

Given \( \boldsymbol A=\boldsymbol{CZM} \), constructed in Lemma~\ref{lem:rha}, and \(\boldsymbol D \), whose existence is guaranteed by Lemma~\ref{lem:pbd}, one can easily check that \( \boldsymbol X=\boldsymbol{CDZM} \) solves \hyperref[rhx]{\rhx}. It follows from Lemma~\ref{lem:rhx} that \hyperref[rhy]{\rhy} is solved by inverting \eqref{eq:x}.  Given any closed set \( K\subset\overline\C\setminus\Delta \), choose \( \Omega \) and \( \Omega_{j\pm} \) so that \( K\subset \overline\C\setminus\overline\Omega \). Then \( \boldsymbol Y = \boldsymbol X \).  Write
\[
\boldsymbol {DZ} = \left(\begin{matrix} 1+\varepsilon_{n1} & \varepsilon_{n2} \\ \varepsilon_{n3} & 1+\varepsilon_{n4} \end{matrix}\right),
\]
where \( |\varepsilon_{nk}|=o(1) \) locally uniformly in \( D \) by \eqref{eq:r} and \eqref{eq:d} and \( \varepsilon_{nk}(\infty)=0 \) as \( \boldsymbol{DZ}(\infty)=\boldsymbol I \). Then
\[
[\boldsymbol{Y}]_{1i} = \big(1+\varepsilon_{n1}\big)\gamma_n[\boldsymbol{M}]_{1i} + \varepsilon_{n2}\gamma_n[\boldsymbol{M}]_{2i}, \quad i\in\{1,2\},
\]
on \( K \). The claim of Theorem~\ref{thm:main} now follows from Lemma~\ref{lem:rhy} and the definition of \( \boldsymbol M \) in~\eqref{eq:n}.





\end{document}